\numberwithin{equation}{section}
\numberwithin{figure}{section}
\theoremstyle{plain}
\newtheorem{thm}{\protect\theoremname}[section]
  \theoremstyle{plain}
  \newtheorem{prop}[thm]{\protect\propositionname}
  \theoremstyle{plain}
  \newtheorem{lem}[thm]{\protect\lemmaname}
  \theoremstyle{remark}
  \newtheorem{rem}[thm]{\protect\remarkname}
  \theoremstyle{definition}
  \newtheorem{defn}[thm]{\protect\definitionname}
  \theoremstyle{plain}
  \newtheorem{cor}[thm]{\protect\corollaryname}
  \theoremstyle{plain}
  \newtheorem{conjecture}[thm]{\protect\conjecturename}
  \providecommand{\conjecturename}{Conjecture}
  \providecommand{\corollaryname}{Corollary}
  \providecommand{\definitionname}{Definition}
  \providecommand{\lemmaname}{Lemma}
  \providecommand{\propositionname}{Proposition}
  \providecommand{\remarkname}{Remark}
\providecommand{\theoremname}{Theorem}
\begin{document}

\title{The Congruence Subgroup Problem\\
for low rank Free and Free Metabelian groups}

\author{David El-Chai Ben-Ezra, Alexander Lubotzky}

\maketitle
\[
To\,\,Efim\,\,Zelmanov,\,\,a\,\,friend\,\,and\,\,a\,\,leader
\]

\begin{abstract}
The congruence subgroup problem for a finitely generated group $\Gamma$
asks whether $\widehat{Aut\left(\Gamma\right)}\to Aut(\hat{\Gamma})$
is injective, or more generally, what is its kernel $C\left(\Gamma\right)$?
Here $\hat{X}$ denotes the profinite completion of $X$. 

In this paper we first give two new short proofs of two known results
(for $\Gamma=F_{2}$ and $\Phi_{2}$) and a new result for $\Gamma=\Phi_{3}$:

~~~~~$\left(1\right)$ $C\left(F_{2}\right)=\left\{ e\right\} $
when $F_{2}$ is the free group on two generators.

~~~~~$\left(2\right)$ $C\left(\Phi_{2}\right)=\hat{F}_{\omega}$
when $\Phi_{n}$ is the free metabelian group on $n$ gene-

~~~~~~~~~~rators, and $\hat{F}_{\omega}$ is the free profinite
group on $\aleph_{0}$ generators.

~~~~~$\left(3\right)$ $C\left(\Phi_{3}\right)$ contains $\hat{F}_{\omega}$.

Results $\left(2\right)$ and $\left(3\right)$ should be contrasted
with an upcoming result of the first author showing that $C\left(\Phi_{n}\right)$
is abelian for $n\geq4$.\\

\end{abstract}
\textbf{Mathematics Subject Classification (2010):} primary: 19B37,
20H05, secondary: 20E18, 20E36, 20E05.\textbf{}\\
\textbf{}\\
\textbf{Key words and phrases:} congruence subgroup problem, profinite
groups, automorphism groups, free groups, free metabelian groups.

\section{Introduction}

The classical congruence subgroup problem (CSP) asks for, say, $G=SL_{n}\left(\mathbb{Z}\right)$
or $G=GL_{n}\left(\mathbb{Z}\right)$, whether every finite index
subgroup of $G$ contains a principal congruence subgroup, i.e. a
subgroup of the form $G\left(m\right)=\ker\left(G\to GL_{n}\left(\mathbb{Z}/m\mathbb{Z}\right)\right)$
for some $0\neq m\in\mathbb{Z}$. Equivalently, it asks whether the
natural map $\hat{G}\to GL_{n}(\hat{\mathbb{Z}})$ is injective, where
$\hat{G}$ and $\hat{\mathbb{Z}}$ are the profinite completions of
the group $G$ and the ring $\mathbb{Z}$, respectively. More generally,
the CSP asks what is the kernel of this map. It is a classical $19^{\underline{th}}$
century result that the answer is negative for $n=2$. Moreover (but
not so classical, cf. \cite{key-17}, \cite{key-4}), the kernel,
in this case, is $\hat{F}_{\omega}$ - the free profinite group on
a countable number of generators. On the other hand, for $n\geq3$,
the map is injective and the kernel is therefore trivial.

The CSP can be generalized as follows: Let $\Gamma$ be a group and
$M$ a finite index characteristic subgroup of it. Denote: 
\[
G\left(M\right)=\ker\left(Aut\left(\Gamma\right)\to Aut\left(\Gamma/M\right)\right).
\]
Such a finite index normal subgroup of $G=Aut\left(\Gamma\right)$
will be called a ``principal congruence subgroup'' and a finite
index subgroup of $G$ which contains such a $G\left(M\right)$ for
some $M$ will be called a ``congruence subgroup''. Now, the CSP
for $\Gamma$ asks whether every finite index subgroup of $G$ is
a congruence subgroup. When $\Gamma$ is finitely generated, $Aut(\hat{\Gamma})$
is profinite and the CSP is equivalent to the question (cf. \cite{key-5},
$\varoint$1 and $\varoint$3): Is the map $\hat{G}=\widehat{Aut\left(\Gamma\right)}\to Aut(\hat{\Gamma})$
injective? More generally, it asks what is the kernel $C\left(\Gamma\right)$
of this map. 

As $GL_{n}\left(\mathbb{Z}\right)=Aut\left(\mathbb{Z}^{n}\right)$,
the classical congruence subgroup results mentioned above can therefore
be reformulated as $C\left(A_{2}\right)=\hat{F}_{\omega}$ while $C\left(A_{n}\right)=\left\{ e\right\} $
for $n\geq3$, when $A_{n}=\mathbb{Z}^{n}$ is the free abelian group
on $n$ generators.

Very few results are known when $\Gamma$ is non-abelian. A very surprising
result was proved in \cite{key-3} by Asada by methods of algebraic
geometry:
\begin{thm}
\label{thm:Free 2}$C\left(F_{2}\right)=\left\{ e\right\} $, i.e.,
the free group on two generators has the congruence subgroup property,
namely $\widehat{Aut\left(F_{2}\right)}\to Aut(\hat{F}_{2})$ is injective.
\end{thm}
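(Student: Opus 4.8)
The plan is to reduce the congruence subgroup property for $\mathrm{Aut}(F_{2})$ to the same property for $\mathrm{Out}(F_{2})\cong GL_{2}(\mathbb{Z})$, now viewed as acting on $\hat{F}_{2}$ by \emph{outer} automorphisms, and then to attack that via the finite characteristic quotients of $F_{2}$. For the set-up: by Nielsen's theorem $\mathrm{IA}(F_{2}):=\ker(\mathrm{Aut}(F_{2})\to GL_{2}(\mathbb{Z}))$ equals $\mathrm{Inn}(F_{2})\cong F_{2}$ (as $F_{2}$ is centre-free), so $1\to F_{2}\to\mathrm{Aut}(F_{2})\xrightarrow{\pi}GL_{2}(\mathbb{Z})\to1$ is exact. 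Fix $N\geq3$; then $\Gamma(N)\leq SL_{2}(\mathbb{Z})$ is torsion-free, hence free of some finite rank $k$ (a torsion-free subgroup of the virtually free group $SL_{2}(\mathbb{Z})$ is free). Since $\Gamma(N)$ is free, the finite-index subgroup $\Gamma:=\pi^{-1}(\Gamma(N))\leq\mathrm{Aut}(F_{2})$ splits as $\Gamma\cong F_{2}\rtimes F_{k}$, with $F_{k}\cong\Gamma(N)$ acting on $F_{2}$ through a set-theoretic section of $\pi$ over $\Gamma(N)$, i.e.\ by honest automorphisms. One checks that the finite quotients of $F_{2}\rtimes F_{k}$ of the form $(F_{2}/M)\rtimes(F_{k}/K_{M})$ --- with $M\leq F_{2}$ characteristic of finite index and $K_{M}\leq F_{k}$ the (finite-index) kernel of $F_{k}\to\mathrm{Aut}(F_{2}/M)$ --- together with the finite quotients of $F_{k}$, separate the points of $F_{2}\rtimes F_{k}$; hence $\widehat{\Gamma}\cong\hat{F}_{2}\rtimes\widehat{F_{k}}$, where $\widehat{F_{k}}$ acts through the continuous extension $\widehat{F_{k}}\to\mathrm{Aut}(\hat{F}_{2})$. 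Finally $\widehat{\Gamma}$ is open in $\widehat{\mathrm{Aut}(F_{2})}$, and since $C(F_{2})$ is contained in $\ker(\widehat{\mathrm{Aut}(F_{2})}\to GL_{2}(\hat{\mathbb{Z}}))$ --- the map induced by the action on $\hat{F}_{2}^{\mathrm{ab}}\cong\hat{\mathbb{Z}}^{2}$ --- and $\Gamma(N)$ is a \emph{congruence} subgroup, we obtain $C(F_{2})\subseteq\widehat{\Gamma}$.

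Consequently $C(F_{2})=\ker(\hat{F}_{2}\rtimes\widehat{F_{k}}\to\mathrm{Aut}(\hat{F}_{2}))$. The $\hat{F}_{2}$-factor maps isomorphically onto the closed normal subgroup $\mathrm{Inn}(\hat{F}_{2})\cong\hat{F}_{2}$ (using that $\hat{F}_{2}$ is centre-free), so projection onto $\widehat{F_{k}}$ identifies $C(F_{2})$ with $\ker(\widehat{F_{k}}\to\mathrm{Out}(\hat{F}_{2}))$; and since this kernel already lies inside the congruence kernel of $SL_{2}(\mathbb{Z})$, which is contained in $\widehat{\Gamma(N)}=\widehat{F_{k}}$, it equals $\ker(\widehat{\mathrm{Out}(F_{2})}\to\mathrm{Out}(\hat{F}_{2}))$. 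It therefore suffices to prove that $\widehat{GL_{2}(\mathbb{Z})}=\widehat{\mathrm{Out}(F_{2})}\to\mathrm{Out}(\hat{F}_{2})$ is injective --- the congruence subgroup property for $\mathrm{Out}(F_{2})$.

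For the heart of the argument: from $\mathrm{Aut}(\hat{F}_{2})=\varprojlim_{M}\mathrm{Aut}(F_{2}/M)$ and the vanishing of $\varprojlim^{1}$ of the finite centres $Z(F_{2}/M)$, one gets an embedding $\mathrm{Out}(\hat{F}_{2})\hookrightarrow\varprojlim_{M}\mathrm{Out}(F_{2}/M)$, and hence the statement reduces to: the subgroups $\ker(\mathrm{Out}(F_{2})\to\mathrm{Out}(F_{2}/M))$, over characteristic finite-index $M\leq F_{2}$, are cofinal among the finite-index subgroups of $GL_{2}(\mathbb{Z})$. Equivalently, restricting to a torsion-free (hence free) finite-index subgroup $\Lambda\leq GL_{2}(\mathbb{Z})$ --- say $\Lambda=\Gamma(N)$ with $N\geq3$ --- one must show that the $\ker(\Lambda\to\mathrm{Out}(F_{2}/M))$ are cofinal among the finite-index subgroups of the free group $\Lambda$. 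Given a finite-index normal subgroup $\Delta\trianglelefteq\Lambda$, the plan is to build a characteristic finite-index $M\leq F_{2}$ whose associated outer $\Lambda$-action on $F_{2}/M$ factors through $\Lambda/\Delta$ (or a finer quotient of $\Lambda$); concretely, one wants $M$ chosen so that the $GL_{2}(\mathbb{Z})$-action on the finite set $\mathrm{Epi}(F_{2},F_{2}/M)/\mathrm{Inn}(F_{2})$ is rich enough to recover the finite quotient $\Lambda/\Delta$.

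This realization step is the main obstacle, and it carries the entire content of the theorem. The naive candidates for $M$, coming from the lower central series of $F_{2}$, recover only the \emph{congruence} topology of $GL_{2}(\mathbb{Z})$: one checks that $\mathrm{Out}(F_{2})\to\mathrm{Out}(F_{2}/\gamma_{k}(F_{2}))$ is injective, and that imposing finite exponents on the abelian sections yields only the principal congruence subgroups $\Gamma(n)$. Hence one genuinely needs \emph{non-nilpotent} characteristic quotients of $F_{2}$ --- built, for instance, from its derived series or from its finite non-soluble quotients --- together with a combinatorial argument that the resulting outer $GL_{2}(\mathbb{Z})$-actions separate \emph{all} finite quotients of $\Lambda$. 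This is precisely the statement that Asada's original argument established by transcendental means (identifying $\hat{F}_{2}$ with the \'etale fundamental group of $\mathbb{P}^{1}\setminus\{0,1,\infty\}$ and exploiting the Grothendieck--Teichm\"uller group), and that a new, purely group-theoretic proof must instead establish directly.
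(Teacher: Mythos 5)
Your first two paragraphs reproduce, correctly, the standard reduction that is also the starting point of the paper (and of Bux--Ershov--Rapinchuk): using Nielsen's theorem, pass to the preimage of a free finite-index congruence subgroup of $GL_{2}(\mathbb{Z})$, split the extension, identify the profinite completion of this preimage with $\hat{F}_{2}\rtimes\widehat{F_{k}}$ (this already needs the triviality of $Z(\hat{F}_{2})$, which you only gesture at, but that part is standard), and conclude that everything comes down to the injectivity of $\widehat{F_{k}}\to \mathrm{Out}(\hat{F}_{2})$, i.e.\ to the injectivity of $\widehat{F_{k}}\to \mathrm{Aut}(\hat{F}_{2})$ together with the triviality of the intersection of its image with $\mathrm{Inn}(\hat{F}_{2})$. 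But at exactly this point you stop: your last paragraph states explicitly that the ``realization step\ldots carries the entire content of the theorem'' and offers no argument for it, only the (correct) observation that nilpotent characteristic quotients of $F_{2}$ cannot suffice and an appeal to Asada's transcendental proof. So this is not a proof; the entire core of the theorem is missing, and what you have established is only the (previously known) reformulation of the problem.

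Moreover, the framing of the missing step as a cofinality statement over all finite-index normal subgroups of a free subgroup $\Lambda\leq GL_{2}(\mathbb{Z})$ is exactly the hard formulation, and the paper shows it can be bypassed by a short concrete device which your proposal does not find. Namely, with the splitting realized by the explicit automorphisms $\alpha:y\mapsto yx^{2}$ and $\beta:x\mapsto xy^{2}$, one passes to the characteristic subgroup $\Delta=\ker\bigl(F_{2}\to(\mathbb{Z}/2\mathbb{Z})^{2}\bigr)\cong F_{5}$ with Schreier generators $e_{1},\dots,e_{5}$, and defines a surjection $\pi:\Delta\to\langle\alpha,\beta\rangle$ by $e_{1}\mapsto\alpha$, $e_{2}\mapsto1$, $e_{3}\mapsto\beta$, $e_{4}\mapsto\alpha^{-1}$, $e_{5}\mapsto\beta^{-1}$; a direct computation shows $\ker\pi$ is $\langle\alpha,\beta\rangle$-invariant and that the induced action of $\alpha,\beta$ on $\Delta/\ker\pi\cong\langle\alpha,\beta\rangle$ is by the inner automorphisms $\alpha,\beta$ themselves. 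Hence $\widehat{\langle\alpha,\beta\rangle}$ maps isomorphically onto $\mathrm{Inn}\bigl(\widehat{\langle\alpha,\beta\rangle}\bigr)$ through its action on $\hat{\Delta}$, which gives the injectivity of $\widehat{\langle\alpha,\beta\rangle}\to \mathrm{Aut}(\hat{F}_{2})$; the triviality of the intersection with $\mathrm{Inn}(\hat{F}_{2})$ then follows because $\alpha,\beta$ fix $[y,x]=e_{2}$, the centralizer of $[y,x]$ in $\hat{F}_{2}$ is the closed cyclic group it generates, and $e_{2}\in\ker\pi$. In short: your reduction matches the paper's, but the decisive self-similarity trick (an $\langle\alpha,\beta\rangle$-equivariant quotient of a finite-index subgroup of $F_{2}$ that is again $\langle\alpha,\beta\rangle$ with the tautological inner action), which converts the outer action into a manifestly faithful one and is precisely the purely group-theoretic replacement for Asada's argument, is absent, so the proof is essentially incomplete.
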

A purely group theoretic proof for this theorem was given by Bux-Ershov-Rapinchuk
\cite{key-5}. Our first goal in this paper is to give an easier and
more direct proof of Theorem \ref{thm:Free 2}, which also give a
better quantitative estimate: we give an explicitly constructed congruence
subgroup $G\left(M\right)$ of $Aut\left(F_{2}\right)$ which is contained
in a given finite index subgroup $H$ of $Aut\left(F_{2}\right)$
of index $n$. Our estimates on the index of $M$ in $F_{2}$ as a
function of $n$ are substantially better than those of \cite{key-5}
- see Theorems \ref{thm:explicit-BER} and \ref{thm:explicit}.

We then turn to $\Gamma=\Phi_{2}$, the free metabelian group on two
generators. The initial treatment of $\Phi_{2}$ is similar to $F_{2}$,
but quite surprisingly, the first named author showed in \cite{key-6}
a negative answer, i.e. $C\left(\Phi_{2}\right)\neq\left\{ e\right\} $.
We also give a shorter proof of this result, deducing that:
\begin{thm}
\label{thm:.metabelian 2}$C\left(\Phi_{2}\right)=\hat{F}_{\omega}$.
\end{thm}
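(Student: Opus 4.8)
The plan is to pin $C(\Phi_2)$ down by a two-sided estimate: on one side to produce an epimorphism $C(\Phi_2)\twoheadrightarrow\hat F_\omega$, so that the topological rank of $C(\Phi_2)$ is exactly $\aleph_0$ (it is at most $\aleph_0$ since $Aut(\Phi_2)$ is finitely generated), and on the other side to show that $C(\Phi_2)$ is a \emph{free} profinite group. Since a free profinite group of rank $\aleph_0$ is unique up to isomorphism, these two facts together give $C(\Phi_2)\cong\hat F_\omega$.

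For the first estimate I would use the characteristic subgroup $\Phi_2'=[\Phi_2,\Phi_2]$, with $\Phi_2/\Phi_2'\cong\mathbb Z^2=A_2$, and the induced epimorphism $\rho\colon Aut(\Phi_2)\twoheadrightarrow Aut(A_2)=GL_2(\mathbb Z)$ (surjective already at the discrete level, since the Nielsen automorphisms of $\Phi_2$ cover a generating set of $GL_2(\mathbb Z)$). The key point is that $\rho$ respects the congruence structures on both sides: if $H\le GL_2(\mathbb Z)$ is a congruence subgroup then $\rho^{-1}(H)$ contains the principal congruence subgroup $G(M_m)=\ker(Aut(\Phi_2)\to Aut(\Phi_2/M_m))$, where $M_m$ is the characteristic subgroup of $\Phi_2$ with $\Phi_2/M_m\cong(\mathbb Z/m)^2$; conversely each $G(M)$ maps under $\rho$ into a congruence subgroup of $GL_2(\mathbb Z)$, because an automorphism trivial on $\Phi_2/M$ is trivial on the further abelian quotient $\Phi_2/M\Phi_2'$. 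Hence $\rho$ carries non-congruence subgroups to non-congruence subgroups and, passing to completions, fits in a commutative square
\[
\begin{array}{ccc}
\widehat{Aut(\Phi_2)} & \longrightarrow & Aut(\hat\Phi_2)\\
\downarrow & & \downarrow\\
\widehat{GL_2(\mathbb Z)} & \longrightarrow & GL_2(\hat{\mathbb Z})
\end{array}
\]
whose horizontal kernels are $C(\Phi_2)$ and $C(A_2)$; from this one obtains an epimorphism $C(\Phi_2)\twoheadrightarrow C(A_2)$, and $C(A_2)=\hat F_\omega$ by the classical congruence subgroup theorem recalled in the introduction. (This argument also reproves, quantitatively, that $C(\Phi_2)\neq\{e\}$.)

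For the second estimate — the freeness of $C(\Phi_2)$ — I would bring in the free group and Theorem~\ref{thm:Free 2}. It is known (Bachmuth) that $Aut(\Phi_2)$ is tame, i.e. $Aut(F_2)\twoheadrightarrow Aut(\Phi_2)=Aut(F_2/F_2'')$ is onto, and its kernel is the group of inner automorphisms of $F_2$ by elements of $F_2''$: such an automorphism lies in $IA(F_2)=Inn(F_2)$, say conjugation by $h$, and $[h,g]\in F_2''$ for all $g$ forces the image of $h$ into the (trivial) centre of $\Phi_2$, so $h\in F_2''$. In particular this kernel is a \emph{free} group. Since $C(F_2)=\{e\}$ by Theorem~\ref{thm:Free 2}, $\widehat{Aut(F_2)}$ embeds into $Aut(\hat F_2)$, which identifies $\widehat{Aut(\Phi_2)}$ with $\widehat{Aut(F_2)}/\overline{Inn(F_2'')}$ and, after intersecting with $\ker\big(Aut(\hat F_2)\to Aut(\hat\Phi_2)\big)$, presents $C(\Phi_2)$ as a closed subquotient controlled by $\widehat{F_2''}$ together with the module of IA-type automorphisms of the metabelianization. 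The structural input that makes this manageable is the Magnus embedding $\Phi_2\hookrightarrow\mathbb Z^2\ltimes R^2$ with $R=\mathbb Z[\mathbb Z^2]=\mathbb Z[x^{\pm1},y^{\pm1}]$, under which $\Phi_2'$ is the \emph{free} cyclic $R$-module generated by $[x,y]$: IA-automorphisms of $\Phi_2$ become Fox–Jacobian matrices in $GL_2(R)$ congruent to the identity modulo the augmentation ideal, and their action on the finite characteristic quotients of $\Phi_2$ is reduction modulo cofinite ideals of $R$. Tracking this action, one should be able to conclude that $C(\Phi_2)$ is free profinite — equivalently, that the epimorphism $C(\Phi_2)\twoheadrightarrow C(A_2)$ of the previous paragraph is an isomorphism, i.e. that the IA-part of $\widehat{Aut(\Phi_2)}$ carries no congruence kernel of its own.

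I expect this last step to be the crux and the main obstacle: one must control precisely which profinite automorphisms are invisible on every finite quotient of $\Phi_2$, and deduce from that not merely a bound on the rank of $C(\Phi_2)$ but its projectivity/freeness. This is where the \emph{metabelian} (rather than merely abelian) nature of $\Phi_2$ is genuinely used — through the module theory of $\mathbb Z[\mathbb Z^2]$ and the facts that $\Phi_2'$ is $R$-free and $Aut(\Phi_2)$ is tame and finitely generated — and it is exactly the step that breaks down for $\Phi_3$, where $\Phi_3'$ and $IA(\Phi_3)$ are far less tractable (in particular $IA(\Phi_3)$ is not finitely generated and $\Phi_3$ is not tame), so that the same scheme yields only the inclusion $\hat F_\omega\subseteq C(\Phi_3)$ of Result~$(3)$ and not an equality.
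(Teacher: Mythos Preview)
Your proposal has a genuine gap at the very first step, and the second step is admittedly only a hope.

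\textbf{The surjection is unjustified and points the wrong way.} The commutative square you write does \emph{not} yield an epimorphism $C(\Phi_2)\twoheadrightarrow C(A_2)$. A snake-lemma chase shows that surjectivity of the induced map on horizontal kernels is equivalent to the statement that every element of the congruence closure of $Aut(\Phi_2)$ acting trivially on $\hat{\mathbb Z}^2$ is already inner --- in other words, that the $\Phi_2$-congruence topology on $Out(\Phi_2)=GL_2(\mathbb Z)$ coincides with the classical one. That is (a strengthening of) what you are trying to prove, so the argument is circular. In fact the natural relation goes the other way: via Bachmuth's identification $Out(\Phi_2)\cong GL_2(\mathbb Z)$ together with Proposition~\ref{prop:Out-Aut}, both $C(\Phi_2)$ and $C(A_2)$ sit as closed normal subgroups of $\widehat{GL_2(\mathbb Z)}$ (or of the free congruence subgroup $\widehat{\langle\alpha,\beta\rangle}$), and one has the \emph{inclusion} $C(\Phi_2)\subseteq C(A_2)$.

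\textbf{The freeness step is missing.} You acknowledge that showing $C(\Phi_2)$ is free profinite is ``the crux and the main obstacle'', and indeed you give no argument. The Magnus embedding and Fox calculus describe $IA(\Phi_2)$ nicely, but nothing in your sketch extracts projectivity of the congruence kernel from that description.

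\textbf{What the paper actually does.} The paper works inside $\widehat{\langle\alpha,\beta\rangle}$ and uses one concrete fact you overlook: $\alpha$ and $\beta$ \emph{fix} the commutator $[y,x]$, and $\overline{\Phi_2'}$ is generated by $[y,x]$ as a $\mathbb Z[\hat{\mathbb Z}^2]$-module. Hence every element of $C(A_2)$, which by definition acts trivially on $\hat{\mathbb Z}^2$, also acts trivially on $\overline{\Phi_2'}$. Since automorphisms that are trivial both on an abelian normal subgroup and on the quotient commute with each other, the image of $C(A_2)$ in $Aut(\hat\Phi_2)$ is abelian, so $C(A_2)/C(\Phi_2)$ is abelian. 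Now one invokes a structural fact about $\hat F_\omega$: any closed normal subgroup with abelian quotient is again isomorphic to $\hat F_\omega$. This gives $C(\Phi_2)\cong\hat F_\omega$ directly --- no separate freeness argument is needed, because freeness is inherited from $C(A_2)$ once the quotient is controlled.
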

We then go ahead from 2 to 3 and prove:
\begin{thm}
\label{thm: metabelian 3}$C\left(\Phi_{3}\right)$ contains a copy
of $\hat{F}_{\omega}$. In particular, the congruence subgroup property
(strongly) fails for $\Phi_{3}$.
\end{thm}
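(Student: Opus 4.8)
\emph{Proof plan.} The plan is to reduce Theorem~\ref{thm: metabelian 3} to Theorem~\ref{thm:.metabelian 2} by exploiting that $\Phi_{2}$ is a retract of $\Phi_{3}$. Write $\Phi_{3}=\langle x_{1},x_{2},x_{3}\rangle$ and $\Phi_{2}=\langle x_{1},x_{2}\rangle$. Since $\Phi_{3}$ is the coproduct of $\Phi_{2}$ and $\langle x_{3}\rangle\cong\mathbb{Z}$ in the variety of metabelian groups, there is a retraction $\pi\colon\Phi_{3}\to\Phi_{2}$, $x_{1},x_{2}\mapsto x_{1},x_{2}$, $x_{3}\mapsto 1$, with section the inclusion; and dually, extending automorphisms of $\Phi_{2}$ by the identity on $x_{3}$ gives a group monomorphism $e\colon\mathrm{Aut}(\Phi_{2})\hookrightarrow\mathrm{Aut}(\Phi_{3})$. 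The same recipe on the profinite side (using that $\hat\Phi_{3}$ is topologically generated by $\hat\Phi_{2}$ and $x_{3}$, with only the metabelian law as a defining relation, so any such assignment extends to a continuous automorphism) gives $e'\colon\mathrm{Aut}(\hat\Phi_{2})\hookrightarrow\mathrm{Aut}(\hat\Phi_{3})$. First I would check that $e$ is continuous for the profinite topologies: if $U\le\mathrm{Aut}(\Phi_{3})$ has finite index then $[\mathrm{Aut}(\Phi_{2}):e^{-1}(U)]\le[\mathrm{Aut}(\Phi_{3}):U]$ because $e$ is injective, so $e$ induces $\hat e\colon\widehat{\mathrm{Aut}(\Phi_{2})}\to\widehat{\mathrm{Aut}(\Phi_{3})}$. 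The square formed by $\hat e$, $e'$ and the two canonical maps $\widehat{\mathrm{Aut}(\Phi_{i})}\to\mathrm{Aut}(\hat\Phi_{i})$ commutes on the dense subgroup $\mathrm{Aut}(\Phi_{2})$, hence everywhere; a diagram chase then shows $\hat e\bigl(C(\Phi_{2})\bigr)\subseteq C(\Phi_{3})$.

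By Theorem~\ref{thm:.metabelian 2} we have $C(\Phi_{2})\cong\hat F_{\omega}$, so it remains to prove that $\hat e$ is \emph{injective on $C(\Phi_{2})$}; then $\hat e(C(\Phi_{2}))$ is the required copy of $\hat F_{\omega}$ inside $C(\Phi_{3})$. The main tool I would use is a partial retraction of $e$: let $P=\mathrm{Stab}_{\mathrm{Aut}(\Phi_{3})}\bigl(\langle\!\langle x_{3}\rangle\!\rangle^{\Phi_{3}}\bigr)$. Every $\psi\in P$ descends to $\Phi_{3}/\langle\!\langle x_{3}\rangle\!\rangle=\Phi_{2}$, giving $r\colon P\to\mathrm{Aut}(\Phi_{2})$ with $r\circ e=\mathrm{id}$ and $e(\mathrm{Aut}(\Phi_{2}))\subseteq P$; moreover the congruence structures are compatible, since for a finite-index characteristic $M\trianglelefteq\Phi_{3}$ both $M\cap\Phi_{2}$ and $\pi(M)$ are finite-index characteristic in $\Phi_{2}$, $e$ sends $G(M\cap\Phi_{2})$ into $G(M)$, and $r$ sends $G(M)\cap P$ into $G(\pi(M))$. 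Given $1\neq c\in C(\Phi_{2})$, pick a finite-index subgroup of $\mathrm{Aut}(\Phi_{2})$ detecting it, transport it via $r$ to a finite-index subgroup of $P$, and then extend it to a finite-index subgroup $U\le\mathrm{Aut}(\Phi_{3})$ with $e^{-1}(U)$ still not containing the tail of $c$. Equivalently, and perhaps more cleanly, one takes the topological free generators $c_{1},c_{2},\dots$ of $C(\Phi_{2})=\hat F_{\omega}$ furnished by the proof of Theorem~\ref{thm:.metabelian 2} and, for each reduced word in them, produces a finite quotient of $\mathrm{Aut}(\Phi_{3})$ separating $\hat e$ of that word from $1$, so that $\hat e(c_{1}),\hat e(c_{2}),\dots$ topologically freely generate.

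The hard part is exactly this last extension/separability step. The difficulty is that $P$ has infinite index in $\mathrm{Aut}(\Phi_{3})$ — its image in $GL_{3}(\mathbb{Z})$ lies in a proper parabolic — so one cannot conclude injectivity of $\hat e$ formally from the retraction $r$; one genuinely needs to realize the relevant finite-index subgroups of $\mathrm{Aut}(\Phi_{2})$ (or of $P$) as traces of finite-index subgroups of $\mathrm{Aut}(\Phi_{3})$. I would carry this out using the description of $\mathrm{Aut}(\Phi_{3})$, its congruence subgroups and its finite quotients in terms of the action on the metabelianization module $\Phi_{3}'$ over $\mathbb{Z}[\mathbb{Z}^{3}]=\mathbb{Z}[t_{1}^{\pm1},t_{2}^{\pm1},t_{3}^{\pm1}]$: one pushes characteristic finite-index subgroups of $\Phi_{2}$ up to characteristic finite-index subgroups of $\Phi_{3}$ (as normal cores of suitable preimages) while keeping track of the module-level congruence data. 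It is precisely this transfer that fails for $\Phi_{n}$ with $n\ge4$, where the larger rank collapses $C(\Phi_{n})$ to an abelian group, whereas for $n=3$ enough of the rank-$2$ (i.e.\ $SL_{2}$-over-a-Laurent-ring) pathology survives to reproduce an entire $\hat F_{\omega}$.
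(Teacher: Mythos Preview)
Your setup through step~2 is correct, but step~3 --- injectivity of $\hat e$ on $C(\Phi_{2})$ --- is the entire content of the theorem, and the method you sketch cannot work. You propose to ``push characteristic finite-index subgroups of $\Phi_{2}$ up to characteristic finite-index subgroups of $\Phi_{3}$ while keeping track of the module-level congruence data''; but characteristic finite-index subgroups of $\Phi_{i}$ yield precisely the \emph{congruence} subgroups of $\mathrm{Aut}(\Phi_{i})$, and by definition no element of $C(\Phi_{2})$ is separated from $1$ by such subgroups. What you actually need is a supply of \emph{non}-congruence finite-index subgroups of $\mathrm{Aut}(\Phi_{3})$ whose traces on $e(\mathrm{Aut}(\Phi_{2}))$ detect $C(\Phi_{2})$, and you have produced none. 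You yourself note that the same transfer must fail for $n\ge 4$, yet give no mechanism distinguishing $n=3$; the allusion to ``$SL_{2}$-over-a-Laurent-ring pathology'' is not an argument.

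The paper's proof is different and supplies exactly the missing ingredient. It shows (i) that $\mathrm{Aut}(\Phi_{3})$ is \emph{large}: with $S=\ker(\Phi_{3}\to C_{2})$ sending $x\mapsto g$ and $y,z\mapsto 1$, the stabilizer $G(S)$ acts on $\bar S\cong\mathbb{Z}^{5}$ commuting with the deck involution, and the action on the rank-$2$ eigenspace $V_{-1}$ gives $\nu:G(S)\to GL_{2}(\mathbb{Z})$ with finite-index image, so $\widehat{\mathrm{Aut}(\Phi_{3})}\supseteq\hat F_{2}$; and (ii) that every non-abelian finite simple group involved in $\mathrm{Aut}(\hat\Phi_{3})$ already lies in some $SL_{3}(p^{d})$ (via the Magnus embedding), so by Jordan--Schur large alternating groups are excluded and $\mathrm{Aut}(\hat\Phi_{3})$ cannot contain $\hat F_{2}$. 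Hence $C(\Phi_{3})\cap\hat F_{2}\neq\{1\}$, and any non-trivial closed normal subgroup of $\hat F_{2}$ contains $\hat F_{\omega}$. Incidentally, $\nu$ \emph{does} fill your gap: identifying $\Phi_{2}$ with $\langle y,z\rangle$, one checks directly that $\nu\circ e$ is the natural projection $\mathrm{Aut}(\Phi_{2})\to\mathrm{Out}(\Phi_{2})=GL_{2}(\mathbb{Z})$, so $\ker\hat e\subseteq\widehat{\mathrm{Inn}(\Phi_{2})}$, which meets $C(\Phi_{2})$ trivially. But constructing $\nu$ is the substantive step, and it is absent from your proposal.
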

This is also surprising, especially if compared with an upcoming paper
of the first author \cite{key-7} showing that $C\left(\Phi_{n}\right)$
is abelian for $n\geq4$. So, while the dichotomy for the abelian
case $A_{n}=\mathbb{Z}^{n}$ is between $n=2$ and $n\geq3$, for
the metabelian case, it is between $n=2,3$ and $n\geq4$.

A main ingredient of the proof of Theorem \ref{thm: metabelian 3}
is showing that $Aut\left(\Phi_{3}\right)$ is large, i.e. it has
a finite index subgroup which is mapped onto a non-abelian free group.
For this we use the method developed by Grunewald and the second author
in \cite{key-13-1} to produce arithmetic quotients of $Aut\left(F_{n}\right)$.
In particular, it is shown there that $Aut\left(F_{3}\right)$ is
large. Our starting point to prove Theorem \ref{thm: metabelian 3}
is the observation that the same proof shows also that $Aut\left(\Phi_{3}\right)$
is large.

In our proof of Theorem \ref{thm:.metabelian 2}, the largeness of
$Aut\left(\Phi_{2}\right)$ is also playing a crucial role. But, a
word of warning is needed here: largeness of $Aut\left(\Gamma\right)$
by itself is not sufficient to deduce negative answer for the CSP
for $\Gamma$. For example, $Aut\left(F_{2}\right)$ is large but
has an affirmative answer for the CSP. At the same time, as mentioned
above, $Aut\left(F_{3}\right)$ is large and we do not know whether
$F_{3}$ has the congruence subgroup property or not. To prove Theorem
\ref{thm: metabelian 3} we use the largeness of $Aut\left(\Phi_{3}\right)$
combined with the fact that every non-abelian finite simple group
which is involved in $Aut(\hat{\Phi}_{3})$ is already involved in
$GL_{3}\left(R\right)$ for some finite commutative ring $R$, as
we will show below.

The paper is organized as follows: In $\varoint$\ref{sec:SCP F2}
we give a short proof for Theorem \ref{thm:Free 2} and in $\varoint$\ref{sec:CSP meta 2}
for Theorem \ref{thm:.metabelian 2}. The $4^{\underline{th}}$ section
is devoted to the proof of Theorem \ref{thm: metabelian 3}. We close
in $\varoint$\ref{sec:Remarks} with some remarks and open problems,
about free nilpotent and solvable groups.

\textbf{Acknowledgements:} The first author wishes to offer his deepest
thanks to the Rudin foundation trustees for their generous support
during the period of the research. This paper is part of his PhD's
thesis at the Hebrew University of Jerusalem. The second author wishes
to acknowledge support by the ISF, the NSF, the ERC, Dr. Max Rossler,
the Walter Haefner Foundation and the ETH Foundation.

\section{The CSP for $F_{2}$\label{sec:SCP F2}}

Before we start, let us quote some general propositions which Bux-Ershov-Rapinchuk
bring throughout their paper.
\begin{prop}
\textup{\label{prop:BER-exact}(cf. \cite{key-5}, Lemma 2.1)} Let:
\[
1\to G_{1}\overset{\alpha}{\rightarrow}G_{2}\overset{\beta}{\rightarrow}G_{3}\to1
\]
be an exact sequence of groups. Assume that $G_{1}$ is finitely generated
and that the center of its profinite completion $\hat{G}_{1}$ is
trivial. Then, the sequence of the profinite completions 
\[
1\to\hat{G}_{1}\overset{\hat{\alpha}}{\rightarrow}\hat{G}_{2}\overset{\hat{\beta}}{\rightarrow}\hat{G}_{3}\to1
\]
is also exact.
\end{prop}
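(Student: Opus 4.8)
The plan is to isolate the one non-formal point. For \emph{any} short exact sequence $1\to G_1\overset{\alpha}{\to}G_2\overset{\beta}{\to}G_3\to1$, the induced sequence $\hat G_1\overset{\hat\alpha}{\to}\hat G_2\overset{\hat\beta}{\to}\hat G_3\to1$ is exact by a standard argument: $\hat\beta$ is onto because it is a continuous map from a compact group whose image contains the dense subgroup $\beta(G_2)=G_3$; $\hat\beta\circ\hat\alpha$ is trivial since it is the profinite completion of the trivial homomorphism; and the subgroup $N:=\hat\alpha(\hat G_1)$ is closed (being compact) and normal (its normaliser is a closed subgroup of $\hat G_2$ containing the dense image of $G_2$, as $G_1\trianglelefteq G_2$), so that $\hat\beta$ induces a map $\hat G_2/N\to\hat G_3$ which, together with the map $\hat G_3\to\hat G_2/N$ obtained from $G_3=G_2/G_1\to\hat G_2/N$ by the universal property, is readily checked on dense subgroups to be an isomorphism; hence $\ker\hat\beta=N=\hat\alpha(\hat G_1)$. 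So the entire content of the proposition reduces to: \emph{$\hat\alpha$ is injective}.

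To prove injectivity of $\hat\alpha$, I would exploit the conjugation action of $G_2$ on its normal subgroup $G_1$. It gives a homomorphism $G_2\to\operatorname{Aut}(G_1)$, which I compose with the functorial homomorphism $\operatorname{Aut}(G_1)\to\operatorname{Aut}(\hat G_1)$ (each automorphism of $G_1$ extends uniquely to a continuous automorphism of $\hat G_1$) to obtain $c\colon G_2\to\operatorname{Aut}(\hat G_1)$. The finite generation of $G_1$ enters exactly here: $\hat G_1$ is then topologically finitely generated, hence has only finitely many open subgroups of each index, so the characteristic open subgroups $M_n$ (with $M_n$ the intersection of all open subgroups of index $\le n$) form a base of neighbourhoods of $1$, and consequently $\operatorname{Aut}(\hat G_1)$ is itself a profinite group — a closed subgroup of $\prod_n\operatorname{Aut}(\hat G_1/M_n)$. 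By the universal property of the profinite completion, $c$ therefore extends to a continuous homomorphism $\Theta\colon\hat G_2\to\operatorname{Aut}(\hat G_1)$.

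Finally I would compare the two continuous homomorphisms $\Theta\circ\hat\alpha$ and $\operatorname{Inn}\colon\hat G_1\to\operatorname{Aut}(\hat G_1)$. On the dense subgroup $G_1\le\hat G_1$ they agree: by the very construction of $c$ and $\Theta$, $\Theta\circ\hat\alpha$ sends $g\in G_1$ to conjugation-by-$g$ on $\hat G_1$, that is, to $\operatorname{Inn}(g)$. Since $\operatorname{Aut}(\hat G_1)$ is Hausdorff, $\Theta\circ\hat\alpha=\operatorname{Inn}$ on all of $\hat G_1$; but $\ker(\operatorname{Inn})=Z(\hat G_1)=\{e\}$ by hypothesis, so $\operatorname{Inn}$, and hence $\hat\alpha$, is injective. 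Together with the first paragraph this yields the exactness of $1\to\hat G_1\to\hat G_2\to\hat G_3\to1$.

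The step I expect to require the most care is not any single deduction but the handling of $\operatorname{Aut}(\hat G_1)$: one must make sure it genuinely carries a profinite topology for which the conjugation map $c$ is continuous — this is precisely where the finite generation of $G_1$ (and not of $G_2$ or $G_3$) is essential — so that the universal property of the profinite completion can legitimately be invoked to produce $\Theta$. Everything else is a diagram chase on dense subgroups together with the general right-exactness of profinite completion recalled in the first paragraph.
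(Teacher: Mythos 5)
Your proposal is correct, and in fact it proves more than it needs to match here, because the paper gives no proof of this proposition at all: it is quoted verbatim from Bux--Ershov--Rapinchuk (\cite{key-5}, Lemma 2.1), so the only ``in-paper'' argument is a citation. Your decomposition is the right one: surjectivity of $\hat{\beta}$ and the identity $\ker\hat{\beta}=\hat{\alpha}(\hat{G}_{1})$ (the closure of the image of $G_{1}$) are formal and hold for every extension, and the two hypotheses are needed only for injectivity of $\hat{\alpha}$. Your mechanism for injectivity --- extend the conjugation homomorphism $G_{2}\to Aut(\hat{G}_{1})$ to a continuous $\Theta:\hat{G}_{2}\to Aut(\hat{G}_{1})$, using that $Aut(\hat{G}_{1})$ is profinite because $\hat{G}_{1}$ is topologically finitely generated, then identify $\Theta\circ\hat{\alpha}$ with $Inn$, which is injective since $Z(\hat{G}_{1})=\{e\}$ --- is sound; it is the same use of the hypotheses as in the argument of \cite{key-5}, which works instead at the level of finite-index subgroups (showing, via centralizers of the finite quotients $G_{1}/M$ with $M$ characteristic of finite index, that the profinite topology of $G_{2}$ induces the full profinite topology on $G_{1}$), so your version is a clean repackaging through the universal property rather than a genuinely new idea. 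Two small points would deserve a line in a written-up version: (i) the comparison on the dense image of $G_{1}$ also uses that $Inn:\hat{G}_{1}\to Aut(\hat{G}_{1})$ is continuous (its composite with $Aut(\hat{G}_{1})\to Aut(\hat{G}_{1}/M_{n})$ has kernel containing the open subgroup $M_{n}$), and that the image of $Aut(\hat{G}_{1})$ in $\prod_{n}Aut(\hat{G}_{1}/M_{n})$ is closed is exactly the (standard, but not free) completeness statement behind ``$Aut(\hat{G}_{1})$ is profinite''; (ii) your closing worry about continuity of $c$ is a non-issue, since the universal property of $\hat{G}_{2}$ applies to an arbitrary abstract homomorphism into a profinite group --- the only place finite generation of $G_{1}$ is really needed is the profiniteness of $Aut(\hat{G}_{1})$, precisely the fact the paper itself invokes in the introduction.
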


\begin{prop}
\textup{\label{prop:BER-F2}(cf. \cite{key-5}, Corollaries 2.3, 2.4.
and 2.7) }Let $F$ be the free group on the set $X$, $\left|X\right|\geq2$.
then:

1. The center of $\hat{F}$, the profinite completion of $F$, is
trivial.

2. If $x,y\in X$, $x\neq y$, then the centralizer of $[y,x]$ in
$\hat{F}$ is $Z_{\hat{F}}\left([y,x]\right)=\overline{\left\langle [y,x]\right\rangle },$
the closure of the cyclic group generated by $[y,x]$. 
\end{prop}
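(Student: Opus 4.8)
The two assertions should be reduced to a single structural fact: in a free group $F$ on $X$ with $|X|\ge 2$, centralizers of nontrivial elements are cyclic, and this "malnormality-type" behavior is inherited by $\hat F$ through the pro-$p$ (or profinite) topology. For part 1, the plan is to take any $1\ne g\in F$; since $F$ is residually finite and even residually (finite $p$-group), $F$ embeds in $\hat F$, and it suffices to find a finite quotient in which the image of $g$ is non-central. I would pick a finite quotient $F\to Q$ with $g\mapsto \bar g\ne e$ and then, using that $Q$ can be taken nonabelian (enlarge $Q$ if necessary, e.g. map onto a nonabelian finite group in which $\bar g$ survives — concretely compose with $F\to F/F''$ or with a surjection to a suitable $\mathrm{GL}_2$ over a finite field), exhibit an element not commuting with $\bar g$. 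Since elements of $\hat F$ are limits of elements of $F$ and the commutator is continuous, an element of the center of $\hat F$ would centralize all of $F$, hence lie in the center of every finite quotient; choosing the quotients to have trivial center forces the element to be trivial. The key input is that $F$ has a cofinal system of finite quotients with trivial center (e.g. finite nonabelian free-nilpotent or linear quotients), which follows from residual finiteness of $F$ modulo its center being trivial.

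For part 2, I would first recall the analogous statement in $F$ itself: $[y,x]$ is not a proper power in $F$ (it is a primitive-associated element, or one checks directly in $F/F''$ that its image has infinite order and is not a proper power), so $Z_F([y,x]) = \langle [y,x]\rangle$. The goal is to upgrade this to $\hat F$. The plan is: given $w\in\hat F$ commuting with $c:=[y,x]$, approximate $w$ by elements $w_n\in F$; in a suitable finite quotient $F\to Q$ the image of $c$ has large order $N$ and a cyclic self-centralizing image (this is where I would use that $\overline{\langle c\rangle}$ is procyclic and that one can separate $\langle c\rangle$ from any coset not in it — i.e. $\langle c\rangle$ is closed and its centralizer computations pass to finite quotients). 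Then $w$ lies in $Z_{\hat F}(c)$, which is the inverse limit of the centralizers $Z_{Q}(\bar c)$; if these centralizers are all cyclic generated by $\bar c$, we get $Z_{\hat F}(c)=\varprojlim \langle \bar c\rangle = \overline{\langle c\rangle}$.

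The main obstacle — and the step I would spend the most care on — is verifying that for a cofinal family of finite-index characteristic (or just normal) subgroups $M\trianglelefteq F$ one has $Z_{F/M}(cM) = \langle c\rangle M / M$, i.e. that the centralizer does not "grow" in finite quotients. A clean way to handle this: use that $F$ is conjugacy separable and, more to the point, that for the specific element $c=[y,x]$ one has good control because $c$ lies in $F'$ and $F/F''$ already detects that $c$ is not a proper power and has small centralizer; combined with the structure of $\hat F$ as the inverse limit, one deduces $Z_{\hat F}(c)=\overline{Z_F(c)}=\overline{\langle c\rangle}$. I expect the honest argument to go through the observation that $\langle c\rangle$ is a retract-like subgroup in an appropriate sense — e.g. $F = \langle x,y\rangle$ with $c=[y,x]$, and one can find a finite quotient where $x,y$ generate a group in which the commutator behaves as in a free nilpotent group of class $2$, pinning down the centralizer. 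Alternatively, if a reference for "centralizers in $\hat F$ are the closures of centralizers in $F$" is available (via the action of $\hat F$ on the profinite tree, or via Lubotzky's or Ribes–Zalesskii's results on profinite free groups), I would simply cite it and deduce both parts at once.
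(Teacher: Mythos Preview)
The paper does not actually prove this proposition; it quotes it from Bux--Ershov--Rapinchuk, so there is no in-paper argument to compare against. That said, your proposal contains real gaps that would need to be fixed before it could serve as a proof.

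For part 1, your key claim is that $F$ has a cofinal system of finite quotients with trivial center, and you offer ``finite nonabelian free-nilpotent or linear quotients'' as examples. But free nilpotent groups have \emph{nontrivial} center (the last nonvanishing term of the lower central series), so they do not provide such a system; and an arbitrary finite quotient of $F$ can of course have large center. What one actually needs is a cofinal system $\{F/M\}$ such that any central element of $F/M$ dies in some further quotient $F/N$. The paper records exactly this mechanism as Proposition~\ref{prop:BER-dis-ver}(1): given $N$ of finite index and a prime $p\nmid [F:N]$, setting $M=N'N^{p}$ ensures every normal abelian subgroup of $F/M$ maps trivially to $F/N$. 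Your outline does not supply any substitute for this step.

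Part 2 has a more serious problem. You propose to control $Z_{F/M}([y,x]M)$ by passing to free nilpotent quotients of class~2; but in the free nilpotent group of class~2 on $x,y$, the element $[y,x]$ is \emph{central}, so its centralizer is the whole group rather than cyclic. This is the opposite of what you need, and it shows that nilpotent quotients alone cannot separate $\overline{\langle [y,x]\rangle}$ from the rest of its centralizer. You correctly identify the heart of the matter---showing that $Z_{F/M}(cM)=\langle c\rangle M/M$ for cofinally many $M$---but none of your suggestions (conjugacy separability, $F/F''$, nilpotent class~2) yields this. The argument in Bux--Ershov--Rapinchuk, recorded here as Proposition~\ref{prop:BER-dis-ver}(2), again uses the $M=N'N^{p}$ construction together with the extra hypothesis $N\subseteq F'F^{6}$; the latter condition is what forces the image of the centralizer down to $\langle [y,x]\cdot N\rangle$. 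Your fallback of citing a general statement that centralizers in $\hat F$ are closures of centralizers in $F$ is fine if such a reference is at hand, but that is exactly what the paper does by citing \cite{key-5}.
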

We start now with the following lemma whose easy proof is left to
the reader:
\begin{lem}
\label{lem:cong-sub}Let $H\leq G=Aut\left(\Gamma\right)$ be a congruence
subgroup. Then: 
\[
\ker(\hat{G}\to Aut(\hat{\Gamma}))=\ker(\hat{H}\to Aut(\hat{\Gamma}))
\]
In particular, the map $\hat{G}\to Aut(\hat{\Gamma})$ is injective
if and only if the map $\hat{H}\to Aut(\hat{\Gamma})$ is injective.
\end{lem}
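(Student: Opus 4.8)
The plan is to deduce the statement from the standard fact that, for a subgroup $H\le G$ of finite index, the canonical map $\hat{H}\to\hat{G}$ is injective with image the closure $\overline{H}$ of $H$ in $\hat{G}$, an open subgroup of index $[G:H]$. Identifying $\hat{H}$ with $\overline{H}\le\hat{G}$ and writing $\rho$ for the map $\hat{G}\to Aut(\hat{\Gamma})$ under consideration, the inclusion $\ker(\rho|_{\hat{H}})=\ker\rho\cap\hat{H}\subseteq\ker\rho$ is automatic; so the whole content of the lemma is the reverse inclusion $\ker\rho\subseteq\hat{H}$, after which the ``in particular'' clause is immediate, since either map is injective precisely when this common kernel is trivial.

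To prove $\ker\rho\subseteq\hat{H}$ I would use that $H$ is a congruence subgroup, so by definition $H\supseteq G(M)=\ker(Aut(\Gamma)\to Aut(\Gamma/M))$ for some finite index characteristic subgroup $M$ of $\Gamma$. Since $M$ is characteristic of finite index in $\Gamma$, its closure $\overline{M}$ in $\hat{\Gamma}$ is an open characteristic subgroup with $\hat{\Gamma}/\overline{M}\cong\Gamma/M$; hence every automorphism of $\hat{\Gamma}$ preserves $\overline{M}$ and induces an automorphism of $\Gamma/M$, giving a homomorphism $Aut(\hat{\Gamma})\to Aut(\Gamma/M)$. The one routine verification is that the composite $\hat{G}\to Aut(\hat{\Gamma})\to Aut(\Gamma/M)$ (the first arrow being $\rho$) agrees, on the dense subgroup $Aut(\Gamma)$ and hence everywhere, with the natural map $\hat{G}\to Aut(\Gamma/M)$ obtained by completing $Aut(\Gamma)\to Aut(\Gamma/M)$ --- a finite quotient does not distinguish between $Aut(\Gamma)$ and its completion --- whose kernel is exactly the closure $\overline{G(M)}$. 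Therefore $\ker\rho\subseteq\overline{G(M)}$; and since $G(M)\subseteq H$, we have $\overline{G(M)}\subseteq\overline{H}=\hat{H}$, so $\ker\rho\subseteq\hat{H}$ and hence $\ker\rho=\ker\rho\cap\hat{H}=\ker(\rho|_{\hat{H}})$, as required.

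The only point that genuinely needs justification --- and the one I regard as the main obstacle, although it is entirely standard --- is the identification of $\hat{H}$ with $\overline{H}\le\hat{G}$. This comes down to checking that the finite index subgroups of $G$ contained in $H$ are cofinal in the directed system of all finite index subgroups of $H$: given $K\le H$ of finite index, $K$ also has finite index in $G$, so its normal core $\mathrm{Core}_{G}(K)$ is a finite index subgroup of $G$ lying inside $K\subseteq H$. It follows that the profinite topology of $G$ restricts to the profinite topology of $H$, that $H$ is dense in $\overline{H}$, and that by compactness $\hat{H}$ maps isomorphically onto $\overline{H}$. Granting this, the rest is just formal bookkeeping with inverse limits together with the remark that passing to a finite quotient is insensitive to profinite completion.
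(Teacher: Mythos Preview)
Your argument is correct and is precisely the standard one; the paper does not actually prove this lemma, saying only that its ``easy proof is left to the reader'', so there is nothing to compare your approach against. One small remark: asserting that $\overline{M}$ is characteristic in $\hat{\Gamma}$ is slightly more than you need and not entirely obvious in general---what you actually use is that $\rho(\hat{G})$ preserves $\overline{M}$, which holds because $\rho(G)$ does (as $M$ is characteristic in $\Gamma$) and the stabilizer of an open normal subgroup in $Aut(\hat{\Gamma})$ is open (a topologically finitely generated profinite group has only finitely many open subgroups of each index); with that adjustment the composite $\hat{G}\to Aut(\Gamma/M)$ is well-defined on the nose and the rest of your proof goes through verbatim.
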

Denote now $F_{2}=\left\langle x,y\right\rangle $ = the free group
on $x$ and $y$. It is a well known theorem of Nielsen (cf. \cite{key-1},
3.5) that the kernel of the natural surjective map: 
\[
Aut\left(F_{2}\right)\to Aut\left(F_{2}/F'_{2}\right)=Aut\left(\mathbb{Z}^{2}\right)=GL_{2}\left(\mathbb{Z}\right)
\]
is $Inn\left(F_{2}\right)$, the inner automorphism group of $F_{2}$.
It is also well known that the group $\left\langle \left(\begin{array}{cc}
1 & 2\\
0 & 1
\end{array}\right),\left(\begin{array}{cc}
1 & 0\\
2 & 1
\end{array}\right)\right\rangle \cong F_{2}$ is free on two generators and of finite index in $GL_{2}\left(\mathbb{Z}\right)$
which contains $\ker\left(GL_{2}\left(\mathbb{Z}\right)\to GL_{2}\left(\mathbb{Z}/4\mathbb{Z}\right)\right)$.
Now, if we denote the preimage of $\left\langle \left(\begin{array}{cc}
1 & 2\\
0 & 1
\end{array}\right),\left(\begin{array}{cc}
1 & 0\\
2 & 1
\end{array}\right)\right\rangle $ under the map $Aut\left(F_{2}\right)\to GL_{2}\left(\mathbb{Z}\right)$
by $Aut'\left(F_{2}\right)$, then $Aut'\left(F_{2}\right)$ is of
finite index in $Aut\left(F_{2}\right)$ and contains the principal
congruence subgroup:
\[
\ker\left(Aut\left(F_{2}\right)\to GL_{2}\left(\mathbb{Z}\right)\to GL_{2}\left(\mathbb{Z}/4\mathbb{Z}\right)=Aut\left(F_{2}/\left(F_{2}^{4}F'_{2}\right)\right)\right).
\]
So, by Lemma \ref{lem:cong-sub} it is enough to prove that $\widehat{Aut'\left(F_{2}\right)}\to Aut(\hat{F}_{2})$
is injective. 

Now, by the description above, we deduce the exact sequence:
\[
1\to Inn\left(F_{2}\right)\to Aut'\left(F_{2}\right)\to\left\langle \left(\begin{array}{cc}
1 & 2\\
0 & 1
\end{array}\right),\left(\begin{array}{cc}
1 & 0\\
2 & 1
\end{array}\right)\right\rangle \to1.
\]
As $\left\langle \left(\begin{array}{cc}
1 & 2\\
0 & 1
\end{array}\right),\left(\begin{array}{cc}
1 & 0\\
2 & 1
\end{array}\right)\right\rangle $ is free , this sequence splits by the map:
\[
\left(\begin{array}{cc}
1 & 2\\
0 & 1
\end{array}\right)\mapsto\alpha=\begin{cases}
x\mapsto & x\\
y\mapsto & yx^{2}
\end{cases}\;,\qquad\left(\begin{array}{cc}
1 & 0\\
2 & 1
\end{array}\right)\mapsto\beta=\begin{cases}
x\mapsto & xy^{2}\\
y\mapsto & y
\end{cases}
\]
and thus: $Aut'\left(F_{2}\right)=Inn\left(F_{2}\right)\rtimes\left\langle \alpha,\beta\right\rangle $.
By Propositions \ref{prop:BER-exact} and \ref{prop:BER-F2}, the
exact sequence: $1\to Inn\left(F_{2}\right)\to Aut'\left(F_{2}\right)\to\left\langle \alpha,\beta\right\rangle \to1$
yields the exact sequence: 
\[
1\to\widehat{Inn\left(F_{2}\right)}\to\widehat{Aut'\left(F_{2}\right)}\to\widehat{\left\langle \alpha,\beta\right\rangle }\to1
\]
which gives: 
\[
\widehat{Aut'\left(F_{2}\right)}=\widehat{Inn\left(F_{2}\right)}\rtimes\widehat{\left\langle \alpha,\beta\right\rangle }
\]
Thus, all we need to show is that the following map is injective:
\[
\widehat{Inn\left(F_{2}\right)}\rtimes\widehat{\left\langle \alpha,\beta\right\rangle }\to Aut(\hat{F}_{2}).
\]

We will prove this, in three parts: The first part is that the map
$\widehat{Inn\left(F_{2}\right)}\to Aut(\hat{F}_{2})$ is injective,
but this is obvious as $\widehat{Inn\left(F_{2}\right)}\cong\hat{F}_{2}$
is mapped isomorphically to $Inn(\hat{F}_{2})\cong\hat{F}_{2}$. The
second part is to show that the map $\rho:\widehat{\left\langle \alpha,\beta\right\rangle }\to Aut(\hat{F}_{2})$
is injective, and the last part is to show that the intersection of
the images of $\widehat{Inn\left(F_{2}\right)}$ and $\widehat{\left\langle \alpha,\beta\right\rangle }$
in $Aut(\hat{F}_{2})$ is trivial, i.e. $Inn(\hat{F}_{2})\cap\textrm{Im}\rho=\left\{ e\right\} $.
\\

So it remains to prove the next two lemmas, Lemma \ref{lem:main-lemma}
and Lemma \ref{lem: part 3}:
\begin{lem}
\label{lem:main-lemma}The map $\widehat{\left\langle \alpha,\beta\right\rangle }\to Aut(\hat{F}_{2})$
is injective.
\end{lem}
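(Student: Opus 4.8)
The plan is to show that the topology which the free group $\langle\alpha,\beta\rangle$ inherits as a subgroup of $Aut(\hat{F}_{2})$ agrees with its full profinite topology. Since $F_{2}$ is residually finite, the composite $\langle\alpha,\beta\rangle\hookrightarrow Aut(F_{2})\to Aut(\hat{F}_{2})$ is already injective on the level of discrete groups, so, writing $K_{M}:=\langle\alpha,\beta\rangle\cap G(M)$ for a finite index characteristic subgroup $M\leq F_{2}$, the kernel of $\widehat{\langle\alpha,\beta\rangle}\to Aut(\hat{F}_{2})$ equals $\bigcap_{M}\overline{K_{M}}$, the intersection running over all such $M$ and the closures taken inside $\widehat{\langle\alpha,\beta\rangle}$. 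The $K_{M}$ are finite index, normal, and closed under finite intersection, so this kernel is trivial precisely when the $K_{M}$ form a neighbourhood basis at $1$ for the profinite topology of $\langle\alpha,\beta\rangle$; equivalently, the lemma amounts to showing that \emph{every finite index subgroup of $\langle\alpha,\beta\rangle$ contains some $K_{M}$}, i.e.\ that every finite quotient $q\colon\langle\alpha,\beta\rangle\twoheadrightarrow Q$ factors as $\langle\alpha,\beta\rangle\to\textrm{Im}\bigl(\langle\alpha,\beta\rangle\to Aut(F_{2}/M)\bigr)\to Q$ for a suitable $M$.

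To produce such subgroups $M$ I would use the mechanism of Grunewald and the second author \cite{key-13-1} for manufacturing arithmetic quotients of $Aut(F_{n})$, specialised to $F_{2}$. If $L\leq F_{2}$ is finite index and characteristic, then $M_{L,k}:=L^{k}[L,L]$ is again finite index and characteristic, $Aut(F_{2})$ preserves $L$ and therefore acts on $L/M_{L,k}\cong L^{ab}\otimes\mathbb{Z}/k\mathbb{Z}$ through $Aut(F_{2}/M_{L,k})$, and this action is the reduction modulo $k$ of the integral representation of $Aut(F_{2})$ on $H_{1}$ of the finite cover of $F_{2}$ corresponding to $L$. As $L$ ranges, the Wedderburn decomposition of $L^{ab}$ under the deck group $F_{2}/L$ exhibits the images of these homology representations as subgroups commensurable with higher rank arithmetic groups — products of special linear and symplectic groups over rings of integers — for which the congruence subgroup property and strong approximation are available. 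Hence every congruence quotient of such an arithmetic group, and so essentially every finite quotient of it, is visible inside some $Aut(F_{2}/M_{L,k})$, and pulling back along $\langle\alpha,\beta\rangle\hookrightarrow Aut(F_{2})$ puts a rich supply of finite quotients of $\langle\alpha,\beta\rangle$ into the congruence topology.

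The final step is to handle an \emph{arbitrary} finite quotient $q\colon\langle\alpha,\beta\rangle\twoheadrightarrow Q$: one embeds $Q$ into some $GL_{r}(\mathbb{Z}/k\mathbb{Z})$ (for instance through a faithful permutation representation reduced modulo $k$) and realises the composite $\langle\alpha,\beta\rangle\to GL_{r}(\mathbb{Z}/k\mathbb{Z})$, with $\alpha$ and $\beta$ landing on their prescribed images, as a constituent of one of the homology representations above — the non-abelian composition factors of $Q$ being accommodated since every finite simple group is involved in $GL_{r}(\mathbb{Z}/k\mathbb{Z})$ for suitable $r,k$, together with the congruence subgroup property for the ambient higher rank group. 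I expect this matching to be the main obstacle: it is not enough that the homology representations have large arithmetic image, one must know that they are flexible enough to exhibit a \emph{prescribed} finite quotient of $\langle\alpha,\beta\rangle$ with the two generators hitting prescribed elements, and it is here — in the choice of $L$, in tracking the images of the explicit automorphisms $\alpha\colon(x,y)\mapsto(x,yx^{2})$ and $\beta\colon(x,y)\mapsto(xy^{2},y)$ through the deck-group decomposition of $L^{ab}$, and in invoking congruence subgroup results of precisely the right rank — that the real work resides (it is no accident that this lemma is as strong as Asada's theorem). Everything else, in particular the reductions of the first paragraph and the passage between the discrete groups and their profinite completions, is routine given Propositions \ref{prop:BER-exact} and \ref{prop:BER-F2}.
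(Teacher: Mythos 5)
The reduction in your first paragraph is correct: the kernel of $\widehat{\left\langle \alpha,\beta\right\rangle }\to Aut(\hat{F}_{2})$ is $\bigcap_{M}\overline{K_{M}}$, and its triviality is equivalent to the congruence topology on $\left\langle \alpha,\beta\right\rangle $ coinciding with its full profinite topology. But from that point on you have a program, not a proof, and the step you yourself call ``the main obstacle'' is precisely the entire content of the lemma: given an arbitrary finite quotient $q\colon\left\langle \alpha,\beta\right\rangle \twoheadrightarrow Q$, you must produce a finite index characteristic $M\leq F_{2}$ with $K_{M}\leq\ker q$, i.e.\ realize $Q$, \emph{with the prescribed images of $\alpha$ and $\beta$}, as a quotient of the image of $\left\langle \alpha,\beta\right\rangle $ in $Aut\left(F_{2}/M\right)$. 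None of the tools you invoke does this. The congruence subgroup property and strong approximation for a higher rank arithmetic group only describe the finite quotients of \emph{that arithmetic group}; they say nothing about which finite quotients of the subgroup $\left\langle \alpha,\beta\right\rangle $ (whose image in $Aut\left(F_{2}/L^{k}[L,L]\right)$ is merely some finite group, not the arithmetic group itself) occur with congruence kernels. Moreover, the assertion that the homology representations attached to finite index $L\leq F_{2}$ have higher rank arithmetic images with CSP is unjustified in the case $n=2$, and ``every finite simple group is involved in $GL_{r}\left(\mathbb{Z}/k\mathbb{Z}\right)$'' is the wrong notion: involvement of composition factors is exactly what the paper exploits for the \emph{negative} results in Section \ref{sec:CSP for 3}, whereas the positive direction needs an exact realization of the homomorphism $q$ inside the congruence topology. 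A telling warning sign: the very same automorphisms $\alpha,\beta$, and the same general ``richness of quotients'' considerations, are present for $\Phi_{2}$, where the analogous congruence kernel is $\hat{F}_{\omega}$; so no argument at this level of generality can possibly suffice.

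For comparison, the paper's proof is elementary and uses no arithmetic quotients at all: one passes to the characteristic subgroup $\Delta=\ker\left(F_{2}\to\left(\mathbb{Z}/2\mathbb{Z}\right)^{2}\right)\cong F_{5}$ with Schreier generators $e_{1},\ldots,e_{5}$, defines the surjection $\pi\colon\Delta\to\left\langle \alpha,\beta\right\rangle $ by $e_{1}\mapsto\alpha$, $e_{2}\mapsto1$, $e_{3}\mapsto\beta$, $e_{4}\mapsto\alpha^{-1}$, $e_{5}\mapsto\beta^{-1}$, checks that $\ker\pi$ is invariant under $\alpha$ and $\beta$, and computes that the induced action of $\alpha,\beta$ on $\Delta/\ker\pi\cong\left\langle \alpha,\beta\right\rangle $ is by the inner automorphisms $\alpha,\beta$ themselves. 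Hence the composite $\widehat{\left\langle \alpha,\beta\right\rangle }\to Aut(\hat{F}_{2})\to Aut(\hat{\Delta})\to Aut\left(\widehat{\left\langle \alpha,\beta\right\rangle }\right)$ identifies $\widehat{\left\langle \alpha,\beta\right\rangle }$ with $Inn\left(\widehat{\left\langle \alpha,\beta\right\rangle }\right)$, which is injective since $\hat{F}_{2}$ has trivial center, and injectivity of the original map follows. If you want to pursue your congruence-topology formulation, it is this kind of explicit ``self-reproducing'' subquotient (as quantified in Theorem \ref{thm:explicit}) that you must construct; generic largeness or arithmeticity of images cannot do the work.
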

Before proving the lemma, we recall a classical result of Schreier:
\begin{thm}
\label{thm:Schreier}\textup{(cf. \cite{key-1}, 2.3 and 2.4)} Let
$F$ be the free group on the set $X$ where $\left|X\right|=n$,
and $\Delta$ a subgroup of $F$ of index $m$. Let $T$ be a right
Schreier transversal of $\Delta$ (i.e. a system of representatives
of right cosets containing the identity, such that the initial segment
of any element of $T$ is also in $T$). Then:

1. $\Delta$ is a free group on $m\cdot\left(n-1\right)+1$ elements.

2. The set $\left\{ tx\left(\overline{tx}\right)^{-1}\neq e\,|\,t\in T,\,x\in X\right\} $
is a free generating set for $\Delta$, where for every $g\in F$
we denote by $\bar{g}$ the unique element in $T$ satisfying $\Delta g=\Delta\bar{g}$.\end{thm}
\begin{proof}
(of Lemma \ref{lem:main-lemma}) Define $\Delta=\ker(F_{2}\to\left(\mathbb{Z}/2\mathbb{Z}\right)^{2})$.
This is a characteristic subgroup of index $4$ in $F_{2}$, that
by the first part of Theorem \ref{thm:Schreier}, is isomorphic to
$F_{5}$. We also have: $\hat{\Delta}=\ker(\hat{F}_{2}\to\left(\mathbb{Z}/2\mathbb{Z}\right)^{2})$,
and therefore, there is a natural homomorphism: $Aut(\hat{F}_{2})\to Aut(\hat{\Delta})\cong Aut(\hat{F}_{5})$
which induces the composition $\widehat{\left\langle \alpha,\beta\right\rangle }\to Aut(\hat{F}_{2})\to Aut(\hat{\Delta})$.
Thus, it is enough to show that the composition map $\widehat{\left\langle \alpha,\beta\right\rangle }\to Aut(\hat{\Delta})$
is injective.

Now, let $X=\left\{ x,y\right\} $ and $T=\left\{ 1,x,y,xy\right\} $
be a right Schreier transversal of $\Delta$. By applying the second
part of Theorem \ref{thm:Schreier} for $X$ and $T$, we get the
following set of free generators for $\Delta$:
\[
e_{1}=x^{2},\quad e_{2}=yxy^{-1}x^{-1},\quad e_{3}=y^{2},\quad e_{4}=xyxy^{-1},\quad e_{5}=xy^{2}x^{-1}
\]

Hence, the automorphisms $\alpha$ and $\beta$ act on $\Delta$ in
the following way:
\begin{eqnarray*}
\alpha & = & \begin{cases}
e_{1}=x^{2}\mapsto x^{2} & =e_{1}\\
e_{2}=yxy^{-1}x^{-1}\mapsto yxy^{-1}x^{-1} & =e_{2}\\
e_{3}=y^{2}\mapsto yx^{2}yx^{2} & =e_{2}e_{4}e_{3}e_{1}\\
e_{4}=xyxy^{-1}\mapsto xyxy^{-1} & =e_{4}\\
e_{5}=xy^{2}x^{-1}\mapsto xyx^{2}yx & =e_{4}e_{2}e_{5}e_{1}
\end{cases}\\
\\
\beta & = & \begin{cases}
e_{1}=x^{2}\mapsto xy^{2}xy^{2} & =e_{5}e_{1}e_{3}\\
e_{2}=yxy^{-1}x^{-1}\mapsto yxy^{-1}x^{-1} & =e_{2}\\
e_{3}=y^{2}\mapsto y^{2} & =e_{3}\\
e_{4}=xyxy^{-1}\mapsto xy^{3}xy & =e_{5}e_{4}e_{3}\\
e_{5}=xy^{2}x^{-1}\mapsto xy^{2}x^{-1} & =e_{5}
\end{cases}
\end{eqnarray*}

Let us now define the map $\pi:\Delta\to\left\langle \alpha,\beta\right\rangle \cong F_{2}$
(yes! these are the same $\alpha$ and $\beta$) by the following
way:
\[
\pi=\begin{cases}
e_{1}\mapsto & \alpha\\
e_{2}\mapsto & 1\\
e_{3}\mapsto & \beta\\
e_{4}\mapsto & \alpha^{-1}\\
e_{5}\mapsto & \beta^{-1}
\end{cases}
\]

It is easy to see that $N=\ker\pi$ is the normal subgroup of $\Delta$
generated as a normal subgroup by $e_{2},\,e_{1}e_{4}$ and $e_{3}e_{5}$,
and that $N$ is invariant under the action of the automorphisms $\alpha$
and $\beta$, since:
\[
\begin{cases}
\alpha\left(e_{2}\right) & =e_{2}\in N\\
\alpha\left(e_{1}e_{4}\right) & =e_{1}e_{4}\in N\\
\alpha\left(e_{3}e_{5}\right) & =e_{2}e_{4}e_{3}e_{1}e_{4}e_{2}e_{5}e_{1}\\
 & =e_{4}\left(\left(e_{4}^{-1}e_{2}e_{4}\right)\left(e_{3}\left(\left(e_{1}e_{4}\right)e_{2}\right)e_{3}^{-1}\right)\left(e_{3}e_{5}\right)\left(e_{1}e_{4}\right)\right)e_{4}^{-1}\in N
\end{cases}
\]
\[
\begin{cases}
\beta\left(e_{2}\right) & =e_{2}\in N\\
\beta\left(e_{1}e_{4}\right) & =e_{5}e_{1}e_{3}e_{5}e_{4}e_{3}=e_{5}\left(\left(e_{1}\left(e_{3}e_{5}\right)e_{1}^{-1}\right)\left(e_{1}e_{4}\right)\left(e_{3}e_{5}\right)\right)e_{5}^{-1}\in N\\
\beta\left(e_{3}e_{5}\right) & =e_{3}e_{5}\in N
\end{cases}
\]

Therefore, the homomorphism $\widehat{\left\langle \alpha,\beta\right\rangle }\to Aut(\hat{\Delta})$
induces a homomorphism: $\widehat{\left\langle \alpha,\beta\right\rangle }\to Aut\left(\widehat{\left\langle \alpha,\beta\right\rangle }\right)$,
and thus it is enough to show that the last map is injective. Now,
under this map, $\alpha$ and $\beta$ act on $\left\langle \alpha,\beta\right\rangle $
in the following way:
\begin{eqnarray*}
\alpha & = & \begin{cases}
\alpha=e_{1}N\mapsto\alpha\left(e_{1}N\right)=\alpha\left(e_{1}\right)N=e_{1}N & =\alpha\\
\beta=e_{3}N\mapsto\alpha\left(e_{3}N\right)=\alpha\left(e_{3}\right)N=e_{2}e_{4}e_{3}e_{1}N & =\alpha^{-1}\beta\alpha
\end{cases}\\
\beta & = & \begin{cases}
\alpha=e_{1}N\mapsto\beta\left(e_{1}N\right)=\beta\left(e_{1}\right)N=e_{5}e_{1}e_{3}N & \,\,\,\,\,\,=\beta^{-1}\alpha\beta\\
\beta=e_{3}N\mapsto\beta\left(e_{3}N\right)=\beta\left(e_{3}\right)N=e_{3}N & \,\,\,\,\,\,=\beta
\end{cases}
\end{eqnarray*}

Namely, $\alpha$ and $\beta$ act via $\pi$ on $\widehat{\left\langle \alpha,\beta\right\rangle }$
by the inner automorphisms $\alpha$ and $\beta$ and hence $\widehat{\left\langle \alpha,\beta\right\rangle }$
is mapped isomorphically to $Inn\left(\widehat{\left\langle \alpha,\beta\right\rangle }\right)$,
yielding that the map $\widehat{\left\langle \alpha,\beta\right\rangle }\to Aut\left(\widehat{\left\langle \alpha,\beta\right\rangle }\right)$
is injective and $\widehat{\left\langle \alpha,\beta\right\rangle }\to Aut(\hat{F}_{2})$
is injective as well, as required.\end{proof}
\begin{lem}
\label{lem: part 3}$Inn(\hat{F}_{2})\cap\textrm{Im}\rho=\left\{ e\right\} $,
where $\rho:\widehat{\left\langle \alpha,\beta\right\rangle }\to Aut(\hat{F}_{2})$
is the map defined above.\end{lem}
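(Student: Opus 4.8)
The plan is to pin down, via the geometry of the commutator $\left[y,x\right]$, any element $g\in\hat{F}_{2}$ that could realise an element of $\textrm{Im}\rho$ as an inner automorphism, and then to play this against the construction used in the proof of Lemma~\ref{lem:main-lemma}. The one computation that drives everything is the observation that \emph{both} generators of $\langle\alpha,\beta\rangle$ fix $c:=\left[y,x\right]$:
\[
\alpha\left(\left[y,x\right]\right)=\left[yx^{2},x\right]=\left[y,x\right],\qquad\beta\left(\left[y,x\right]\right)=\left[y,xy^{2}\right]=\left[y,x\right].
\]
Hence every element of $\langle\alpha,\beta\rangle$ fixes $c$, and since the set of continuous automorphisms of $\hat{F}_{2}$ fixing a prescribed element is closed, the whole image $\textrm{Im}\rho=\overline{\langle\alpha,\beta\rangle}$ fixes $c$.

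Now suppose $\varphi\in Inn(\hat{F}_{2})\cap\textrm{Im}\rho$, and write $\varphi=c_{g}$ for some $g\in\hat{F}_{2}$ and $\varphi=\rho\left(w\right)$ for some $w\in\widehat{\langle\alpha,\beta\rangle}$. Since $\rho\left(w\right)$ fixes $c=\left[y,x\right]$, the element $g$ centralises $\left[y,x\right]$, so by Proposition~\ref{prop:BER-F2}(2) we get $g\in Z_{\hat{F}_{2}}\!\left(\left[y,x\right]\right)=\overline{\langle\left[y,x\right]\rangle}$. In particular $g\in\hat{\Delta}$, because $\left[y,x\right]=e_{2}\in\Delta$; and moreover $\pi\left(g\right)\in\overline{\langle\pi\left(e_{2}\right)\rangle}=\left\{ e\right\}$, since $\pi\left(e_{2}\right)=1$ and the map $\pi\colon\hat{\Delta}\to\widehat{\langle\alpha,\beta\rangle}$ is continuous.

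Finally, restrict $\varphi$ to the characteristic subgroup $\hat{\Delta}$ and pass to the quotient $\hat{\Delta}/\hat{N}=\widehat{\langle\alpha,\beta\rangle}$. This is legitimate: $\hat{N}$ is invariant under $\rho\left(w\right)|_{\hat{\Delta}}$, because it is invariant under $\alpha|_{\Delta}$ and $\beta|_{\Delta}$ (this is exactly what was checked in the proof of Lemma~\ref{lem:main-lemma}) and hence under the closure $\overline{\langle\alpha,\beta\rangle}$; and it is invariant under $c_{g}|_{\hat{\Delta}}$ because $\hat{N}\trianglelefteq\hat{\Delta}$ and $g\in\hat{\Delta}$. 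Computing the induced automorphism of $\widehat{\langle\alpha,\beta\rangle}$ from the description $\varphi=c_{g}$ with $g\in\hat{\Delta}$ yields conjugation by $\pi\left(g\right)=e$, i.e. the identity; computing it from $\varphi=\rho\left(w\right)$ yields precisely the automorphism that $w$ induces on $\widehat{\langle\alpha,\beta\rangle}$ in the proof of Lemma~\ref{lem:main-lemma}, where it was shown that the assignment $w\mapsto(\text{induced automorphism})$ is injective. Therefore $w=e$, so $\varphi=\rho\left(e\right)=e$, which proves $Inn(\hat{F}_{2})\cap\textrm{Im}\rho=\left\{ e\right\}$.

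The substantive input is the displayed computation that $\alpha$ and $\beta$ fix $\left[y,x\right]$, together with the centraliser statement of Proposition~\ref{prop:BER-F2}(2): between them they force the conjugating element $g$ into the procyclic group $\overline{\langle\left[y,x\right]\rangle}$, which is annihilated by $\pi$. The only points to be careful about are routine ones — that fixing of $\left[y,x\right]$ and invariance of $\hat{N}$ pass from $\langle\alpha,\beta\rangle$ to its closure (closedness of the relevant subsets of $Aut(\hat{F}_{2})$), and that the two ways of computing the automorphism induced on $\hat{\Delta}/\hat{N}$ genuinely agree — and both are immediate from the set-up of Lemma~\ref{lem:main-lemma}.
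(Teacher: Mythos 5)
Your proof is correct and follows essentially the same route as the paper's: both use that $\alpha,\beta$ fix $[y,x]$ together with Proposition \ref{prop:BER-F2}(2) to force the conjugating element into $\overline{\left\langle [y,x]\right\rangle }$, which is killed by $\pi$, and then invoke the injectivity of $\widehat{\left\langle \alpha,\beta\right\rangle }\to Aut\left(\widehat{\left\langle \alpha,\beta\right\rangle }\right)$ established in Lemma \ref{lem:main-lemma} to conclude the intersection is trivial. The only difference is presentational (you argue element-wise with $\varphi=c_{g}=\rho\left(w\right)$, while the paper argues via containment in $\overline{\left\langle Inn\left(e_{2}\right)\right\rangle }$ and the isomorphism $\textrm{Im}\rho\cong Inn\left(\widehat{\left\langle \alpha,\beta\right\rangle }\right)$), which is immaterial.
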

\begin{proof}
First we observe that $\alpha$ and $\beta$ fix $e_{2}=[y,x]$. Thus,
by the second part of Proposition \ref{prop:BER-F2}, we have:
\[
Inn(\hat{F}_{2})\cap\textrm{Im}\rho\subseteq Z_{Inn\left(\hat{F}_{2}\right)}\left(Inn\left([y,x]\right)\right)=\overline{\left\langle Inn\left([y,x]\right)\right\rangle }=\overline{\left\langle Inn\left(e_{2}\right)\right\rangle }.
\]

Now, as $e_{2}\in\ker\pi$, where $\pi$ is as defined in the proof
of Lemma \ref{lem:main-lemma}, the image of $\overline{\left\langle Inn\left(e_{2}\right)\right\rangle }$
in $Inn\left(\widehat{\left\langle \alpha,\beta\right\rangle }\right)$
is trivial. Thus, the image of $Inn(\hat{F}_{2})\cap\textrm{Im}\rho$
in $Inn\left(\widehat{\left\langle \alpha,\beta\right\rangle }\right)$
is trivial , and isomorphic to $Inn(\hat{F}_{2})\cap\textrm{Im}\rho$
as we saw that $\textrm{Im}\rho$ is mapped isomorphically to $Inn\left(\widehat{\left\langle \alpha,\beta\right\rangle }\right)$.
So $Inn(\hat{F}_{2})\cap\textrm{Im}\rho$ is trivial. 
\end{proof}
This finishes the proof of Theorem \ref{thm:Free 2}. In \cite{key-5},
the authors give an explicit construction of a congruence subgroup
which is contained in a given finite index subgroup of $Aut(\hat{F}_{2})$.
They prove the following theorem:
\begin{thm}
\textup{\label{thm:explicit-BER}(cf. \cite{key-5}, Theorem 5.1)
}Let $H$ be a finite index normal subgroup of $G=Aut\left(F_{2}\right)$
such that $Inn\left(F_{2}\right)\leq H\leq Aut'\left(F_{2}\right)$
and let $n=\left[Aut'\left(F_{2}\right):H\right]$. Pick two distinct
odd primes $p,q\nmid n$, and set $m=n\cdot p^{n+1}$. Then, there
exists an explicitly constructed normal subgroup $M\leq F_{2}$ of
index dividing $144\cdot m^{4}\cdot q^{36\cdot m^{4}+1}$ such that
$G\left(M\right)\leq H$, when for a general normal subgroup $M\vartriangleleft F_{2}$
we define:
\[
G\left(M\right)=\left\{ \sigma\in G\,|\,\sigma\left(M\right)=M,\,\,\sigma\,\,acts\,\,trivially\,\,on\,\,F_{2}/M\right\} 
\]

\end{thm}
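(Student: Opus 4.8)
The plan is to follow the line of argument used in the new proof of Theorem \ref{thm:Free 2} above, but to keep careful track of indices at every stage. The starting point is that since $H\vartriangleleft G=Aut(F_2)$ with $Inn(F_2)\le H\le Aut'(F_2)$ and $[Aut'(F_2):H]=n$, the image $\bar H$ of $H$ in $\langle\alpha,\beta\rangle\cong F_2$ is a normal subgroup of index $n$. I would first produce an explicit characteristic subgroup $M_0\vartriangleleft F_2$, contained in $\Delta=\ker(F_2\to(\mathbb Z/2\mathbb Z)^2)$, whose associated congruence subgroup $G(M_0)$ lands inside $H$. Recall from the proof of Lemma \ref{lem:main-lemma} that $\langle\alpha,\beta\rangle$ acts on $\langle\alpha,\beta\rangle\cong\Delta/N$ through $\pi$ by the inner automorphisms $\alpha,\beta$; so to force an automorphism of $F_2$ lying in $Aut'(F_2)$ into $H$, it suffices to (i) make it act trivially on $\Delta/N$ modulo the normal subgroup $\bar H\cap(\Delta/N)$-type data, and (ii) kill the ambiguity coming from $Inn(F_2)\cap\mathrm{Im}\,\rho$, which by Lemma \ref{lem: part 3} is controlled by the centralizer of $e_2=[y,x]$. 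Concretely I expect to take $M$ to be the intersection of $N$ with the characteristic closure of a suitable power $e_2^{p^{n+1}}$ together with the preimage under $\pi$ of a verbal-type finite-index subgroup of $\langle\alpha,\beta\rangle$ refining $\bar H$; the auxiliary primes $p,q$ enter precisely to separate the "inner part" from the "$\pi$-part".

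The key steps, in order: (1) Describe $\bar H\vartriangleleft F_2=\langle\alpha,\beta\rangle$ of index $n$, and replace it by the intersection of all its conjugates and, if needed, a characteristic subgroup $K$ of $\langle\alpha,\beta\rangle$ with $K\le\bar H$ and $[F_2:K]$ bounded — here one gets a factor like $n\cdot(\text{something})$; choosing $K=\langle\alpha,\beta\rangle^{p^{n+1}}[\langle\alpha,\beta\rangle,\langle\alpha,\beta\rangle]$ or a similar explicit verbal subgroup gives the exponent $p^{n+1}$ and the number-theoretic conditions $p,q\nmid n$. (2) Pull $K$ back through $\pi:\Delta\to\langle\alpha,\beta\rangle$ to a subgroup $\pi^{-1}(K)\le\Delta$ of index $[\langle\alpha,\beta\rangle:K]$ in $\Delta$, hence of index $4\cdot[\langle\alpha,\beta\rangle:K]$ in $F_2$, which is invariant under $\alpha,\beta$ since $N$ and $K$ are. (3) Intersect with a characteristic-in-$F_2$ subgroup to make the result characteristic (not merely $\langle\alpha,\beta\rangle$-invariant): replace $\pi^{-1}(K)$ by the intersection of its images under a generating set of $Aut(F_2)/\!\sim$, or more cheaply, by its core as a subgroup of $F_2$ intersected with $\Delta$; this is where the factor $144=4\cdot 36$ and the $q^{36 m^4+1}$ come from, the exponent being governed by the number of relevant cosets and the Reidemeister–Schreier bound from Theorem \ref{thm:Schreier}. (4) Verify $G(M)\le H$: an automorphism $\sigma\in Aut'(F_2)$ fixing $F_2/M$ fixes $\Delta/\pi^{-1}(K)\cong\langle\alpha,\beta\rangle/K$, so its image in $\langle\alpha,\beta\rangle$ lies in $K\le\bar H$ up to an inner automorphism; the inner ambiguity is an $Inn(F_2)$-element centralizing $e_2$, i.e. a power of $Inn(e_2)$ by Proposition \ref{prop:BER-F2}, and acting trivially on $F_2/M$ forces that power to be divisible by $p^{n+1}$ while being bounded, hence trivial — so $\sigma\in H$. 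Finally assemble the index bound by multiplying the contributions: $4$ from $\Delta$, the index of $K$ in $\langle\alpha,\beta\rangle$ (giving the $m^4$-type term via the abelianization $(\mathbb Z/p^{n+1}\mathbb Z)^2$ rescaled by $n$), and the $q$-power from taking cores, landing at the stated divisor $144\cdot m^4\cdot q^{36 m^4+1}$.

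The main obstacle I anticipate is step (3) together with the bookkeeping in step (4): ensuring that the subgroup one writes down is genuinely \emph{characteristic} in $F_2$ (so that $G(M)$ is a principal congruence subgroup in the sense of the introduction), while simultaneously keeping its index small enough to match $144\cdot m^4\cdot q^{36m^4+1}$. Passing from an $\langle\alpha,\beta\rangle$-invariant (or even $Aut'(F_2)$-invariant) subgroup to a fully characteristic one can blow up the index badly unless one is clever about which verbal/automorphism-stable subgroup to use; the auxiliary prime $q$ and the Schreier generating set of $\Delta$ are the tools to control this, and getting the exponent of $q$ down to $36 m^4+1$ rather than something far larger is the delicate quantitative heart of the argument. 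Everything else — the exact sequence splitting, the action formulas for $\alpha,\beta$ on $\Delta$ and on $\Delta/N$, and the centralizer computation — is already available from Lemmas \ref{lem:main-lemma} and \ref{lem: part 3} and Propositions \ref{prop:BER-exact}, \ref{prop:BER-F2}, so this is essentially a matter of threading explicit indices through that machinery.
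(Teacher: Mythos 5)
There is a genuine gap, and two of your concrete choices fail. Note first that the paper does not reprove this statement (it is quoted from \cite{key-5}); what it proves is the sharper Theorem \ref{thm:explicit}, by the explicit construction $K=H\cap\left\langle \alpha,\beta\right\rangle $, $N=F_{2}'F_{2}^{6}\cap\bigcap_{i=1}^{4}t_{i}^{-1}\pi^{-1}\left(K\right)t_{i}$, $M=F_{2}'F_{2}^{4}\cap N'N^{p}$ with a single prime $p\nmid6n$. Measured against either that proof or the statement you set out to prove, your step (1) is already wrong: the verbal subgroup $\left\langle \alpha,\beta\right\rangle ^{p^{n+1}}\left[\left\langle \alpha,\beta\right\rangle ,\left\langle \alpha,\beta\right\rangle \right]$ is \emph{not} contained in $\bar{H}$, because $p\nmid n$ implies every element of the order-$n$ quotient $\left\langle \alpha,\beta\right\rangle /\bar{H}$ is a $p^{n+1}$-th power, so this verbal subgroup maps onto the whole quotient. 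The workable choices are $K=H\cap\left\langle \alpha,\beta\right\rangle $ itself (as in Theorem \ref{thm:explicit}), or $\bar{H}^{p}\bar{H}'$ inside $\bar{H}$, whose index is $n\cdot p^{n+1}=m$ because $\bar{H}$ is free of rank $n+1$ by Schreier -- not the $\left(\mathbb{Z}/p^{n+1}\mathbb{Z}\right)^{2}$ accounting you propose. Your step (4) is also not a valid argument and aims at the wrong target: since $Inn\left(F_{2}\right)\leq H$, the inner component of $\sigma=Inn\left(f\right)\cdot\delta$ is harmless, and what must be shown is $\delta\in K$; moreover ``the power of $Inn\left(e_{2}\right)$ is divisible by $p^{n+1}$ and bounded, hence trivial'' does not follow (exponents only matter modulo the exponent of the finite quotient), and the centralizer of $\left[y,x\right]M$ in $F_{2}/M$ is not literally the cyclic group you use. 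The missing tool, which your sketch never invokes, is the \emph{discrete} centralizer statement, Proposition \ref{prop:BER-dis-ver}: with $N\subseteq F_{2}'F_{2}^{6}$, a prime not dividing $\left[F_{2}:N\right]$ and $M=N^{p}N'$, part 2 forces the image of $Z_{F_{2}/M}\left(\left[y,x\right]M\right)$ in $F_{2}/N$ into $\left\langle \left[y,x\right]N\right\rangle $, and part 1 kills the central ambiguity in $\left\langle \alpha,\beta\right\rangle /K'K^{p}$ at the end; the profinite Proposition \ref{prop:BER-F2}, which is all you cite, gives no control in finite quotients.

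The obstacle you single out -- making $M$ characteristic -- is not part of the statement at all: $G\left(M\right)$ is defined here with the stabilizer condition $\sigma\left(M\right)=M$, so $M$ only needs to be normal (the paper passes to $\bigcap_{\sigma\in Aut\left(F_{2}\right)}\sigma\left(M\right)$ only in a later footnote, where the index is irrelevant). Consequently your explanation of the constants is off: in the actual construction the factor $144=4\cdot9\cdot4$ comes from intersecting the four $t_{i}$-conjugates of $\pi^{-1}\left(K\right)$ (giving the fourth power of the index and a factor $4=\left[F_{2}:\Delta\right]$), the index $9$ of $F_{2}'F_{2}^{6}$ in $\Delta$, and the final intersection with $F_{2}'F_{2}^{4}$ (needed so that $G\left(M\right)\leq Aut'\left(F_{2}\right)$ and one can write $\sigma=Inn\left(f\right)\cdot\delta$), while the prime power is the Schreier bound for the index of $N'N^{p}$ in $F_{2}$ -- not a price paid for ``taking cores''. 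To salvage your plan, replace your verbal subgroup by $K=H\cap\left\langle \alpha,\beta\right\rangle $ (or $\bar{H}^{p}\bar{H}'$ if you want exactly the BER bound with $m=np^{n+1}$), build $N$ and $M$ as above, and then run the transfer of the trivial-action hypothesis through $\pi$ to $\left\langle \alpha,\beta\right\rangle /K'K^{p}$, using both parts of Proposition \ref{prop:BER-dis-ver}; that is the mechanism that actually puts $\delta$ in $K$ and hence $\sigma$ in $H$.
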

We end this section with a much simpler explicit construction of a
congruence subgroup and with a better bound for the index of $M$.
But before, let us recall the ``discrete version'' of Proposition
\ref{prop:BER-F2} from \cite{key-5}:
\begin{prop}
\textup{\label{prop:BER-dis-ver}(cf. \cite{key-5}, Propositions
2.2. and 2.6.) }Let $F$ be the free group on the set $X$, $\left|X\right|\geq2$,
and let $F/N$ be a finite quotient of $F$. Pick a prime $p$ not
dividing the order of $F/N$ and set $M=N^{p}N'.$ Then:

1. The image of every normal abelian subgroup of $F/M$ through the
natural projection $F/M\to F/N$, is trivial.

2. If $N\subseteq F'_{2}F_{2}^{6}$, $x,y\in X$, $x\neq y$, then
the image of the centralizer $Z_{F/M}\left([y,x]\cdot M\right)$ through
the natural projection $F/M\to F/N$, is $\left\langle [y,x]\cdot N\right\rangle $.\end{prop}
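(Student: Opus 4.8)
The plan is to carry out everything inside the finite group $\bar F:=F/M$ and to reduce both assertions to module theory over $\mathbb F_p[Q]$, where $Q:=F/N$. First I would set up the structure. Since $M=N^pN'$ is characteristic in $N$, it is normal in $F$, and $A:=N/M\cong (N/N')\otimes_{\mathbb Z}\mathbb F_p$ is an elementary abelian $p$-group, i.e. an $\mathbb F_p$-vector space, on which $Q$ acts by conjugation. As $\gcd(|A|,|Q|)=1$, Schur--Zassenhaus splits the extension, $\bar F\cong A\rtimes Q$; and as $p\nmid|Q|$, the ring $\mathbb F_p[Q]$ is semisimple, so every $g\in Q$ acts semisimply on $A$ and $A=\ker(1-g)\oplus(1-g)A$. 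I would also use the description of $A$ coming from the free differential calculus (the Crowell exact sequence): for $F=F_2=\langle x,y\rangle$ there is an exact sequence of $\mathbb F_p[Q]$-modules
\[
0\longrightarrow A\longrightarrow\mathbb F_p[Q]^2\longrightarrow\bar I_Q\longrightarrow 0,
\]
in which $\bar I_Q$ is the augmentation ideal, the second map sends $(a,b)\mapsto a(\bar x-1)+b(\bar y-1)$, and $wM\mapsto(\overline{\partial w/\partial x},\overline{\partial w/\partial y})$. Semisimplicity splits this, and since $\mathbb F_p[Q]=\mathbb F_p\oplus\bar I_Q$ it shows $A$ contains a copy of the regular module $\mathbb F_p[Q]$; in particular $Q$ acts faithfully on $A$.

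For part 1, let $B/M\trianglelefteq\bar F$ be abelian and let $q\in Q$ lie in its image. Choosing $b\in B/M$ over $q$, for every $a\in A$ one has $[a,b]=(1-q)a\in A$; normality of $B/M$ puts this into $(B/M)\cap A=:A_0$, so $(1-q)A\subseteq A_0$, while commutativity of $B/M$ forces $q$ to act trivially on $A_0$, so $(1-q)A_0=0$. Hence $(1-q)^2A=0$, so $(1-q)A=0$ by semisimplicity, so $q$ acts trivially on $A$, so $q=1$ by faithfulness. This is precisely the assertion of part 1.

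For part 2, put $z=[y,x]$, $\bar z=zN$, $\hat z=zM$, $d=\mathrm{ord}_Q(\bar z)$ (so $z^d\in N$ and $p\nmid d$), and $N_{\bar z}=1+\bar z+\dots+\bar z^{d-1}\in\mathbb F_p[Q]$. Writing $\hat z=\hat z_0\,s(\bar z)$ in $\bar F=A\rtimes Q$ with $\hat z_0\in A$, a direct computation shows that $a\,s(q)$ centralizes $\hat z$ iff $q\in Z_Q(\bar z)$ and $(1-\bar z)a=(1-q)\hat z_0$, which is solvable in $a$ exactly when $(1-q)$ annihilates the $\ker(1-\bar z)$-component of $\hat z_0$; since $\hat z^d=N_{\bar z}\hat z_0=z^dM$, that component equals $d^{-1}(z^dM)$. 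Hence the image of $Z_{\bar F}(\hat z)$ in $Q$ is precisely $Z_Q(\bar z)\cap\mathrm{Stab}_Q(z^dM)$, and $\langle\bar z\rangle$ visibly lies inside it. For the reverse inclusion: under $A\hookrightarrow\mathbb F_p[Q]^2$ the element $z^dM$ becomes the Fox-derivative vector $v:=N_{\bar z}(\bar y-\bar z,\,1-\bar z\bar x)$ of $[y,x]^d$. If $q\in Z_Q(\bar z)$ fixes $v$, then $w:=N_{\bar z}(q-1)$ satisfies $w(\bar y-\bar z)=0=w(1-\bar z\bar x)$; substituting $\bar z=\bar y\bar x\bar y^{-1}\bar x^{-1}$ forces $w\bar y=w=w\bar x$, hence $w\bar g=w$ for all $\bar g\in Q$ since $\bar x,\bar y$ generate $Q$. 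Comparing coefficients, $w=c\sum_{g\in Q}g$; but $\varepsilon(w)=\varepsilon(N_{\bar z})\,\varepsilon(q-1)=d\cdot 0=0$ while $\varepsilon(c\sum_g g)=c|Q|$ and $p\nmid|Q|$, so $w=0$. Thus $N_{\bar z}q=N_{\bar z}$, i.e. (group elements being linearly independent) $\langle\bar z\rangle q=\langle\bar z\rangle$, i.e. $q\in\langle\bar z\rangle$. This gives $Z_Q(\bar z)\cap\mathrm{Stab}_Q(z^dM)=\langle\bar z\rangle$, which is part 2.

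The hypothesis $N\subseteq F_2'F_2^6$ enters only at the last step, where the argument needs $v\neq 0$, i.e. $z^d=[y,x]^d\notin M$; that these Fox derivatives survive nontrivially in $A$ is what the hypothesis guarantees (it keeps $\bar x,\bar y$ from collapsing into $\langle\bar z\rangle$). The part I expect to be the real work is the bookkeeping around the Crowell sequence: getting the $Q$-variance right and checking that the centralizer condition in $\bar F$ translates exactly into the stabilizer condition $q\cdot(z^dM)=z^dM$. Once that is in place, the closing identity $q\in\langle\bar z\rangle$ drops out of the elementary group-ring computation above.
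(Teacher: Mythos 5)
The paper does not prove Proposition \ref{prop:BER-dis-ver} at all --- it is quoted from \cite{key-5} (Propositions 2.2 and 2.6) --- so yours is an independent, self-contained argument, and after checking the steps I believe it is correct in the case the paper actually uses. The inputs you take for granted (Schur--Zassenhaus, Maschke, and the exactness and $Q$-equivariance of the Fox-derivative relation-module sequence $0\to N/M\to\mathbb{F}_p[Q]^{n}\to\bar{I}_Q\to 0$, which survives reduction mod $p$ because $I_Q$ is $\mathbb{Z}$-free) are standard and legitimately citable. Part 1 is fine: $(1-q)A\subseteq A\cap(B/M)$, $(1-q)\bigl(A\cap(B/M)\bigr)=0$, so $(1-q)^2A=0$, semisimplicity gives $(1-q)A=0$, and faithfulness (the regular summand of $A$) gives $q=1$; just note you wrote the sequence for $F_2$ while part 1 is stated for arbitrary $\left|X\right|\geq2$ --- the identical argument works with $\mathbb{F}_p[Q]^{\left|X\right|}$. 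In part 2, the translation of the centralizer condition in $A\rtimes Q$ into ``$q\in Z_Q(\bar z)$ and $q$ fixes $d^{-1}(z^dM)$'', the Fox vector of $[y,x]^d$, and the closing computation ($w\bar z=w$, hence $w\bar x=w\bar y=w$, hence $w$ is a multiple of $\sum_{g\in Q}g$, killed by augmentation since $p\nmid\left|Q\right|$) all check out.

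Two remarks. First, your last paragraph misdescribes your own proof: the hypothesis $N\subseteq F_2'F_2^6$ is never used, since the implication ``$q$ fixes $v$ $\Rightarrow$ $q\in\langle\bar z\rangle$'' is obtained without ever needing $v\neq0$; so for $F=F_2$ you actually prove a formally stronger statement than the quoted one (the mod-$6$ condition is apparently an artifact of the method of \cite{key-5}). Second, the one genuine limitation: the final step uses that $\bar x,\bar y$ generate $Q=F/N$, which holds only for $F=F_2$. The proposition as stated allows any free $F$ on $X$ with $x,y\in X$, and for $\left|X\right|>2$ your argument stalls exactly there: one only gets that $w$ is constant on right cosets of $\langle\bar x,\bar y\rangle$, and the augmentation trick no longer forces $w=0$; presumably this is where a hypothesis such as $N\subseteq F'F^6$ must do real work. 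Since the paper applies the proposition only to $F_2$ (in Theorem \ref{thm:explicit}), this does not affect its use here, but you should either state the rank-two restriction explicitly or supply the missing argument for higher rank.
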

\begin{thm}
\label{thm:explicit}Let $H$ be a finite index normal subgroup of
$G=Aut\left(F_{2}\right)$ such that $Inn\left(F_{2}\right)\leq H\leq Aut'\left(F_{2}\right)=Inn\left(F_{2}\right)\rtimes\left\langle \alpha,\beta\right\rangle $
and let $n=\left[Aut'\left(F_{2}\right):H\right]$. Then for every
prime $p\nmid6n$, there exists an explicitly constructed normal subgroup
$M\leq F_{2}$ of index dividing $144\cdot n^{4}\cdot p^{36\cdot n^{4}+1}$
such that $G\left(M\right)\leq H$.\end{thm}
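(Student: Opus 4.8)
The plan is to retrace the proof of Theorem~\ref{thm:Free 2} but to carry it out inside one carefully chosen finite quotient $F_2/M$, using the ``discrete versions'' in Proposition~\ref{prop:BER-dis-ver} in place of the profinite statements used there. First the reduction. As in that proof write $Aut'(F_2)=Inn(F_2)\rtimes\langle\alpha,\beta\rangle$ and let $\theta\colon Aut'(F_2)\to\langle\alpha,\beta\rangle$ be the retraction onto the second factor; its kernel is $Inn(F_2)$, and (via Nielsen's theorem) $\theta$ is the map $Aut'(F_2)\to GL_2(\mathbb Z)$ followed by the isomorphism of the Sanov subgroup with $\langle\alpha,\beta\rangle$. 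Since $Inn(F_2)\le H$ we get $H=Inn(F_2)\rtimes K$ with $K:=\theta(H)\trianglelefteq\langle\alpha,\beta\rangle\cong F_2$ of index $n$, and $H=\theta^{-1}(K)$. So it suffices to produce an explicit $M\trianglelefteq F_2$ of index dividing $144\,n^4 p^{36n^4+1}$ with (i) $G(M)\subseteq Aut'(F_2)$ and (ii) $\theta(G(M))\subseteq K$.

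For the construction, keep the objects from the proof of Lemma~\ref{lem:main-lemma}: $\Delta=\ker(F_2\to(\mathbb Z/2\mathbb Z)^2)\cong F_5$ with free generators $e_1,\dots,e_5$, the surjection $\pi\colon\Delta\to\langle\alpha,\beta\rangle$ with kernel $N$, the facts that $\alpha,\beta$ preserve $N$ and act on $\Delta/N\cong\langle\alpha,\beta\rangle$ by $Inn(\alpha),Inn(\beta)$, and $[y,x]=e_2\in N$. Let $\mathfrak N$ be the normal core in $F_2$ of $\pi^{-1}(K)$, intersected with $F_2'F_2^6$. Then $\mathfrak N\trianglelefteq F_2$, $\mathfrak N\subseteq\Delta$, $\pi(\mathfrak N)\subseteq K$ and $\mathfrak N\subseteq F_2'F_2^6$; since $\pi^{-1}(K)$ has index $n$ in $\Delta\cong F_5$ and $F_2/\Delta\cong(\mathbb Z/2\mathbb Z)^2$, the core has index dividing $n^4$ in $\Delta$, whence $[\Delta:\mathfrak N]\mid 9n^4$ and (Schreier) $\mathfrak N\cong F_r$ with $r=4[\Delta:\mathfrak N]+1\le 36n^4+1$. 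Fix a prime $p\nmid 6n$ and set
\[
M:=\mathfrak N^{p}\mathfrak N'\cap F_2'F_2^{4}.
\]
This is normal in $F_2$; $[F_2:\mathfrak N^p\mathfrak N']=[F_2:\mathfrak N]\cdot p^{\,r}$ with $[F_2:\mathfrak N]\mid 36n^4$, and intersecting with $F_2'F_2^4=\ker(F_2\to(\mathbb Z/4\mathbb Z)^2)$ multiplies the index by at most $4$ (the image of $\mathfrak N^p\mathfrak N'$ in $(\mathbb Z/4\mathbb Z)^2$ lies in $(2\mathbb Z/4\mathbb Z)^2$ because $p$ is odd), so $[F_2:M]\mid 144\,n^4 p^{\,36n^4+1}$. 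Note also $p\nmid[F_2:\mathfrak N]$ and $p\nmid[\langle\alpha,\beta\rangle:\pi(\mathfrak N)]$, both indices dividing divisors of powers of $6n$.

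Now the verification that $G(M)\le H$, which is the heart of the matter. Let $\sigma\in G(M)$. As $M\subseteq F_2'F_2^4$, $\sigma$ acts trivially on $(\mathbb Z/4\mathbb Z)^2$, so its image in $GL_2(\mathbb Z)$ lies in $\ker(GL_2(\mathbb Z)\to GL_2(\mathbb Z/4\mathbb Z))$, which is contained in the Sanov subgroup (its mod-$4$ image is $(\mathbb Z/2\mathbb Z)^2$, hence omits $-I$); thus $\sigma\in Aut'(F_2)$, proving (i). Write $\sigma=Inn(w)\gamma$ with $\gamma\in\langle\alpha,\beta\rangle$; we must show $\gamma\in K$. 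Since $\gamma$ fixes $[y,x]$ and $\sigma$ fixes $[y,x]$ modulo $M\subseteq\mathfrak N^p\mathfrak N'$, the class $w\,\mathfrak N^p\mathfrak N'$ centralizes $[y,x]\,\mathfrak N^p\mathfrak N'$ in $F_2/\mathfrak N^p\mathfrak N'$; Proposition~\ref{prop:BER-dis-ver}(2) (applicable since $\mathfrak N\subseteq F_2'F_2^6$ and $p\nmid[F_2:\mathfrak N]$) then gives $w\in[y,x]^k\mathfrak N$ for some $k\in\mathbb Z$. In particular $w\in\Delta$, so $Inn(w)|_\Delta$ is conjugation by an element of $\Delta$ and therefore preserves $N$; as $\gamma$ does too, $\sigma$ preserves $N$ and acts on $\Delta/N\cong\langle\alpha,\beta\rangle$ by $Inn(\pi(w)\gamma)$, with $\pi(w)\in\pi(\mathfrak N)\subseteq K$ because $\pi([y,x])=1$. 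Since $\sigma$ acts trivially on $F_2/\mathfrak N^p\mathfrak N'$ it acts trivially on the quotient $\Delta\big/(\mathfrak N^p\mathfrak N')N\cong\langle\alpha,\beta\rangle/\pi(\mathfrak N)^p\pi(\mathfrak N)'$, so $\pi(w)\gamma$ lies in the preimage of the centre of $\langle\alpha,\beta\rangle/\pi(\mathfrak N)^p\pi(\mathfrak N)'$; applying Proposition~\ref{prop:BER-dis-ver}(1) to the free group $\langle\alpha,\beta\rangle$ with normal subgroup $\pi(\mathfrak N)$ (legitimate as $p\nmid[\langle\alpha,\beta\rangle:\pi(\mathfrak N)]$), that centre is killed in $\langle\alpha,\beta\rangle/\pi(\mathfrak N)$, so its preimage lies in $\pi(\mathfrak N)\subseteq K$. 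Hence $\pi(w)\gamma\in K$, and since $\pi(w)\in K$ we get $\gamma\in K$, which is (ii).

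The step I expect to be the main obstacle is exactly the point where $N$ fails to be normal in $F_2$: a priori $Inn(w)$ need not preserve $N$, so $\sigma$ need not act on $\Delta/N$ at all, and the whole ``$\widehat{\langle\alpha,\beta\rangle}$ maps to inner automorphisms'' mechanism of Theorem~\ref{thm:Free 2} breaks down at finite level. The remedy is to let the centralizer statement Proposition~\ref{prop:BER-dis-ver}(2) do its work first, forcing $w$ into $\langle[y,x]\rangle\mathfrak N\subseteq\Delta$ so that $Inn(w)$ becomes inner on $\Delta$; only after that does the ``trivial centre'' input of Theorem~\ref{thm:Free 2}, here replaced by its finite avatar Proposition~\ref{prop:BER-dis-ver}(1), apply. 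A careful accounting of Schreier ranks through this argument (the index-$n$ subgroup $\pi^{-1}(K)$ of $\Delta\cong F_5$, a normal core over $(\mathbb Z/2\mathbb Z)^2$, the congruence subgroups $F_2'F_2^6$ and $F_2'F_2^4$, and the passage to $\mathfrak N^p\mathfrak N'$) yields the stated bound $144\,n^4 p^{36n^4+1}$.
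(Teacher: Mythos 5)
Your construction is exactly the paper's: the same $\Delta$, $\pi$, normal subgroup $\mathfrak N$ (the paper's $N=F_2'F_2^6\cap\bigcap_i t_i^{-1}\pi^{-1}(K)t_i$) and $M=\mathfrak N^p\mathfrak N'\cap F_2'F_2^4$, with the same index bookkeeping and the same two-step verification via Proposition \ref{prop:BER-dis-ver}: part (2) forces the inner part into $\langle[y,x]\rangle\mathfrak N$, and part (1) kills the resulting central element. The only (harmless) deviation is that you apply part (1) to the quotient by $\pi(\mathfrak N)^p\pi(\mathfrak N)'$ rather than by $K'K^p$ as the paper does; the argument is otherwise essentially identical and correct.
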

\begin{proof}
Recall the map $\pi:F_{2}\supseteq\Delta\to\left\langle \alpha,\beta\right\rangle $
from the proof of Lemma \ref{lem:main-lemma}, and let $t_{1}=1,\,t_{2}=x,\,t_{3}=y,\,t_{4}=xy$
be the system of representatives of right cosets of $\Delta$ in $F_{2}$.
Denote also $K=H\cap\left\langle \alpha,\beta\right\rangle $ and
define:
\begin{eqnarray*}
N & = & F_{2}'F_{2}^{6}\bigcap_{g\in F_{2}}g^{-1}\pi^{-1}\left(K\right)g=F_{2}'F_{2}^{6}\bigcap_{i=1}^{4}t_{i}^{-1}\pi^{-1}\left(K\right)t_{i}\\
M & = & F_{2}'F_{2}^{4}\cap N'N^{p}
\end{eqnarray*}
Then $\pi^{-1}\left(K\right)$ is a subgroup of index $n$ in $\Delta$
and $\bigcap_{i=1}^{4}t_{i}^{-1}\pi^{-1}\left(K\right)t_{i}$ is a
normal subgroup of $F_{2}$ of index dividing $n^{4}$ in $\Delta$,
and of index dividing $4n^{4}$ in $F_{2}$. So as $F'_{2}F_{2}^{6}$
is of index $9$ in $\Delta$, $N$ is a normal subgroup of index
dividing $36\cdot n^{4}$ in $F_{2}$. Thus, by the Schreier formula,
the index of $N'N^{p}$ in $F_{2}$ divides $36\cdot n^{4}\cdot p^{36\cdot n^{4}+1}$
and the index of $M$ in $F_{2}$ is dividing $4\cdot36\cdot n^{4}\cdot p^{36\cdot n^{4}+1}$.
So it remains to show that $G\left(M\right)\leq H$.

Let $\sigma\in G\left(M\right)$. As $M\leq F_{2}'F_{2}^{4}$ we have:
\[
G\left(M\right)\leq\ker\left(G\to Aut\left(F_{2}/\left(F_{2}'F_{2}^{4}\right)\right)\right)\leq Aut'\left(F_{2}\right)=Inn\left(F_{2}\right)\rtimes\left\langle \alpha,\beta\right\rangle 
\]
and therefore we can write $\sigma=Inn\left(f\right)\cdot\delta$
for some $f\in F_{2}$ and $\delta\in\left\langle \alpha,\beta\right\rangle $.
By assumption, $\sigma$ acts trivially on $F_{2}/M$ and thus $\delta$
acts on $F_{2}/M$ as $Inn\left(f^{-1}\right)$. Now, as $\alpha$
and $\beta$ fix $\left[y,x\right]$, we deduce that so does $\delta$.
Thus, $f\cdot M\in Z_{F_{2}/M}\left(\left[y,x\right]\cdot M\right)$
and by Proposition \ref{prop:BER-dis-ver}, $f\cdot N\in\left\langle [y,x]\cdot N\right\rangle $.
Hence, $\delta$ acts on the group $F_{2}/M$ as $Inn\left([y,x]^{r}\cdot n\right)$
for some $r\in\mathbb{Z}$ and $n\in N$. Therefore, $\delta$ acts
on $\Delta/M$ as $Inn\left(e_{2}^{r}\cdot n\right)$ for some $r\in\mathbb{Z}$
and $n\in N$. So, $\delta$ acts on $\pi\left(\Delta\right)/\pi\left(M\right)=\Delta/\left(M\cdot\ker\pi\right)$
as $Inn\left(\pi\left(e_{2}^{r}\cdot n\right)\right)$ for some $r\in\mathbb{Z}$
and $n\in N$. But $e_{2}\in\ker\pi$, so $\delta$ acts on $\pi\left(\Delta\right)/\pi\left(M\right)$
as $Inn\left(\pi\left(n\right)\right)$ for some $n\in N$. Now, by
the definition of $N$, $\pi\left(N\right)\subseteq K$ and also $\pi\left(M\right)\subseteq K'K^{p}$,
so $\delta$ acts on $\pi\left(\Delta\right)/K'K^{p}$ as $Inn\left(k\right)$
for some $k\in K$. Moreover, by the definition of $\pi$ we have
$\pi\left(\Delta\right)=\left\langle \alpha,\beta\right\rangle $
and by the computations we made in the proof of Lemma \ref{lem:main-lemma},
$\delta$ acts on $\left\langle \alpha,\beta\right\rangle $ as $Inn\left(\delta\right)$.
Thus, there exists some $k\in K$ such that $Inn\left(\delta\right)\cdot Inn\left(k\right)^{-1}$
acts trivially on $\left\langle \alpha,\beta\right\rangle /K'K^{p}$,
i.e. $\delta\cdot k^{-1}\in Z\left(\left\langle \alpha,\beta\right\rangle /K'K^{p}\right)$.
Now, by the first part of Proposition \ref{prop:BER-dis-ver}, as
$Z\left(\left\langle \alpha,\beta\right\rangle /K'K^{p}\right)$ is
an abelian normal subgroup of $\left\langle \alpha,\beta\right\rangle /K'K^{p}$
it is mapped trivially to $\left\langle \alpha,\beta\right\rangle /K$
. I.e. $\delta\cdot k^{-1}\in K$, so also $\delta\in K\subseteq H$.
Thus, $\sigma=Inn\left(f\right)\cdot\delta\in H$, as required.
\end{proof}

\section{The CSP for $\Phi_{2}$ \label{sec:CSP meta 2}}

In this section we will prove Theorem \ref{thm:.metabelian 2}, and
will show that the congruence kernel of the free metabelian group
on two generators is the free profinite group on a countable number
of generators. 

Before we start, let us observe that for a group $\Gamma$, one can
also ask a parallel congruence subgroup problem for $G=Out\left(\Gamma\right)$.
I.e. one can ask whether every finite index subgroup of $G$ contains
a principal congruence subgroup of the form: 
\[
G\left(M\right)=\ker\left(G\to Out\left(\Gamma/M\right)\right)
\]
for some finite index characteristic subgroup $M\leq\Gamma$. When
$\Gamma$ is finitely generated, this is equivalent to the question
whether the congruence map $\widehat{G}\to Out(\hat{\Gamma})$ is
injective. Moreover, it is easy to see that Lemma \ref{lem:cong-sub}
has a parallel version for $G$, namely, if $H\leq G$ is a congruence
subgroup of $G$, then:
\[
\ker(\widehat{G}\to Out(\hat{\Gamma}))=\ker(\widehat{H}\to Out(\hat{\Gamma}))
\]

We start now with the next proposition which is slightly more general
than Lemma 3.1. in \cite{key-5}. Nevertheless, it is proven by the
same arguments:
\begin{prop}
\textup{\label{prop:Out-Aut}(cf. \cite{key-5}, Lemma 3.1.) }Let
$\Gamma$ be a finitely generated residually finite group such that
$\hat{\Gamma}$ has a trivial center. Considering the congruence map
$\widehat{Out\left(\Gamma\right)}\to Out(\hat{\Gamma})$, we have:
\[
C\left(\Gamma\right)=\ker(\widehat{Aut\left(\Gamma\right)}\to Aut(\hat{\Gamma}))\cong\ker(\widehat{Out\left(\Gamma\right)}\to Out(\hat{\Gamma}))
\]

\end{prop}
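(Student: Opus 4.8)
The plan is to exploit the well-known exact sequence $1\to Inn(\Gamma)\to Aut(\Gamma)\to Out(\Gamma)\to 1$ together with the hypothesis that $\hat\Gamma$ has trivial center, and to pass everything to profinite completions. First I would note that since $\Gamma$ is residually finite and finitely generated, $Inn(\Gamma)\cong \Gamma/Z(\Gamma)$; but $Z(\Gamma)$ embeds in $Z(\hat\Gamma)=\{e\}$, so in fact $Inn(\Gamma)\cong\Gamma$ and its profinite completion is $\hat\Gamma$, which also has trivial center. Hence Proposition~\ref{prop:BER-exact} applies to the sequence above and gives an exact sequence of profinite completions
\[
1\to \widehat{Inn(\Gamma)}\to \widehat{Aut(\Gamma)}\to \widehat{Out(\Gamma)}\to 1,
\]
with $\widehat{Inn(\Gamma)}\cong\hat\Gamma$. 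On the other hand, the analogous sequence of actual automorphism groups of $\hat\Gamma$, namely $1\to Inn(\hat\Gamma)\to Aut(\hat\Gamma)\to Out(\hat\Gamma)\to 1$, is exact by definition of $Out$, and $Inn(\hat\Gamma)\cong\hat\Gamma$ since $Z(\hat\Gamma)=\{e\}$.

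Next I would assemble the commutative ladder between these two short exact sequences, where the vertical maps are the three congruence maps $\widehat{Inn(\Gamma)}\to Inn(\hat\Gamma)$, $\widehat{Aut(\Gamma)}\to Aut(\hat\Gamma)$, and $\widehat{Out(\Gamma)}\to Out(\hat\Gamma)$. The leftmost vertical map is an isomorphism: it is just the identity $\hat\Gamma\to\hat\Gamma$ under the identifications above. A diagram chase (snake lemma, or a direct element-wise argument) then identifies $C(\Gamma)=\ker(\widehat{Aut(\Gamma)}\to Aut(\hat\Gamma))$ with $\ker(\widehat{Out(\Gamma)}\to Out(\hat\Gamma))$: injectivity of the left vertical map forces any element of $C(\Gamma)$ to map to something nontrivial in $\widehat{Out(\Gamma)}$ unless it is trivial, and surjectivity of $\widehat{Aut(\Gamma)}\to\widehat{Out(\Gamma)}$ lets one lift kernel elements on the right back to kernel elements on the left.

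The step I expect to require the most care is verifying that the top row is genuinely exact on the left — i.e.\ that $\widehat{Inn(\Gamma)}\to\widehat{Aut(\Gamma)}$ is injective — and that it is compatible with the bottom row. This is exactly where the hypothesis $Z(\hat\Gamma)=\{e\}$ enters through Proposition~\ref{prop:BER-exact}: without it the profinite completion functor need not preserve exactness. I would spell out that $\widehat{Inn(\Gamma)}$, being identified with $\hat\Gamma$, has trivial center, so the hypothesis of Proposition~\ref{prop:BER-exact} is met; the commutativity of the ladder is then a formality since all maps are the natural ones. Once the ladder is in place, the identification of the two kernels is a routine chase, and I would present it as such rather than belaboring it.
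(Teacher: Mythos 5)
Your argument is correct and is essentially the same as the paper's: the paper simply defers to Bux--Ershov--Rapinchuk's Lemma 3.1, whose proof is exactly this use of Proposition \ref{prop:BER-exact} on $1\to Inn(\Gamma)\to Aut(\Gamma)\to Out(\Gamma)\to 1$ (noting $Z(\Gamma)$ is trivial since it lands in $Z(\hat\Gamma)=\{e\}$ by residual finiteness, so $Inn(\Gamma)\cong\Gamma$), followed by the commutative ladder with $1\to Inn(\hat\Gamma)\to Aut(\hat\Gamma)\to Out(\hat\Gamma)\to 1$ and the diagram chase identifying the two kernels. No gaps.
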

It is well known that $\Phi_{2}$ is a residually finite group (cf.
\cite{key-6}, Theorem 2.11). It is also proven there that $Z(\hat{\Phi}_{2})$
is trivial (proposition 2.10). So by the above proposition:
\[
C\left(\Phi_{2}\right)=\ker(\widehat{Aut\left(\Phi_{2}\right)}\to Aut(\hat{\Phi}_{2}))\cong\ker(\widehat{Out\left(\Phi_{2}\right)}\to Out(\hat{\Phi}_{2}))
\]

In addition, it is an old result by Bachmuth \cite{key-13} that the
kernel of the surjective map: 
\[
\ker\left(Aut\left(\Phi_{2}\right)\to Aut\left(\Phi_{2}/\Phi'_{2}\right)=Aut\left(\mathbb{Z}^{2}\right)=GL_{2}\left(\mathbb{Z}\right)\right)=Inn\left(\Phi_{2}\right)
\]
i.e., $Out\left(\Phi_{2}\right)\cong GL_{2}\left(\mathbb{Z}\right)$.
Now, the free group $\left\langle \left(\begin{array}{cc}
1 & 2\\
0 & 1
\end{array}\right),\left(\begin{array}{cc}
1 & 0\\
2 & 1
\end{array}\right)\right\rangle $ is a congruence subgroup of $Out\left(\Phi_{2}\right)$ as it contains:
\[
\ker\left(Out\left(\Phi_{2}\right)\to Out\left(\Phi_{2}/\Phi_{2}'\Phi_{2}^{4}\right)\right)=\ker\left(GL_{2}\left(\mathbb{Z}\right)\to GL_{2}\left(\mathbb{Z}/4\mathbb{Z}\right)\right).
\]
So by the appropriate version of Lemma \ref{lem:cong-sub} and by
Proposition \ref{prop:Out-Aut}, we obtain that:
\begin{eqnarray*}
C\left(\Phi_{2}\right) & = & \ker(\widehat{Out\left(\Phi_{2}\right)}\to Out(\hat{\Phi}_{2}))\\
 & = & \ker(\widehat{GL_{2}\left(\mathbb{Z}\right)}\to Out(\hat{\Phi}_{2}))\\
 & = & \ker\left(\widehat{\left\langle \left(\begin{array}{cc}
1 & 2\\
0 & 1
\end{array}\right),\left(\begin{array}{cc}
1 & 0\\
2 & 1
\end{array}\right)\right\rangle }\to Out(\hat{\Phi}_{2})\right)
\end{eqnarray*}

Now, as $\left\langle \left(\begin{array}{cc}
1 & 2\\
0 & 1
\end{array}\right),\left(\begin{array}{cc}
1 & 0\\
2 & 1
\end{array}\right)\right\rangle $ is a free group, we can also state that:
\begin{eqnarray}
C\left(\Phi_{2}\right) & = & \ker\left(\widehat{\left\langle \left(\begin{array}{cc}
1 & 2\\
0 & 1
\end{array}\right),\left(\begin{array}{cc}
1 & 0\\
2 & 1
\end{array}\right)\right\rangle }\to Out(\hat{\Phi}_{2})\right)\label{eq:(*)}\\
 & \cong & \ker(\widehat{\left\langle \alpha,\beta\right\rangle }\to Aut(\hat{F}_{2})\to Aut(\hat{\Phi}_{2})\to Out(\hat{\Phi}_{2}))\nonumber 
\end{eqnarray}
where $\alpha$ and $\beta$ are the automorphisms of $F_{2}$ that
we defined in the previous section, which are the preimages of $\left(\begin{array}{cc}
1 & 2\\
0 & 1
\end{array}\right)$ and $\left(\begin{array}{cc}
1 & 0\\
2 & 1
\end{array}\right)$ under the map $Aut\left(F_{2}\right)\to GL_{2}\left(\mathbb{Z}\right)$,
respectively. So all we need to show is that:
\begin{lem}
\label{lem:.main-meta-2}$C\left(\Phi_{2}\right)=\ker(\widehat{\left\langle \alpha,\beta\right\rangle }\to Out(\hat{\Phi}_{2}))=\hat{F}_{\omega}$.\end{lem}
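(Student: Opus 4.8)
The plan is to identify the kernel $C(\Phi_2) = \ker(\widehat{\langle\alpha,\beta\rangle}\to \mathrm{Out}(\hat\Phi_2))$ explicitly by exploiting the structure of $\Phi_2$ as an extension $1\to \Phi_2' \to \Phi_2 \to \mathbb{Z}^2 \to 1$, where $\Phi_2'$ is a module over $\mathbb{Z}[\mathbb{Z}^2] = \mathbb{Z}[s^{\pm1},t^{\pm1}]$. Recall from the previous section that $\langle\alpha,\beta\rangle \cong F_2$ lifts the level-$4$ congruence subgroup of $GL_2(\mathbb{Z})$, and that (by Bachmuth) $\mathrm{Out}(\Phi_2)\cong GL_2(\mathbb{Z})$ acts on $\Phi_2'$ through its action on the group ring. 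The congruence kernel measures the failure of $\langle\alpha,\beta\rangle$-outer-automorphisms to be detected on finite quotients; since on the abelianization $\mathbb{Z}^2$ everything is already faithful, the kernel is carried entirely by the action on the metabelian part $\Phi_2'$ and the obstruction to lifting module automorphisms compatibly.

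First I would make the action concrete: write $\Phi_2 = \langle x,y\rangle$, set $A = \Phi_2/\Phi_2' = \mathbb{Z}^2 = \langle \bar x,\bar y\rangle$, and recall the Magnus embedding realizing $\Phi_2'$ inside a free module over $R = \mathbb{Z}[A]$, with the relation module cut out by the Fox derivative / augmentation data. The key computation is to describe how $\widehat{\langle\alpha,\beta\rangle}$ acts on $\hat\Phi_2$ modulo inner automorphisms, and to show that an element lies in the congruence kernel precisely when it acts trivially on every \emph{finite} quotient module $\hat\Phi_2' \otimes_{\hat R} (\hat R/I)$ for open ideals $I$, i.e. when it acts trivially on the profinite completion of the relevant $R$-module but we must remember $\langle\alpha,\beta\rangle$ itself injects into $\mathrm{Aut}$ of these finite data. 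Then I would exhibit, following the first author's paper \cite{key-6}, an explicit surjection from the congruence kernel onto $\hat F_\omega$: the point is that $\langle\alpha,\beta\rangle$ acting on $\hat\Phi_2$ factors through $GL_2(\hat{\mathbb Z})$ on the abelian part but the ``extra'' profinite elements act by conjugation-type automorphisms of the module $\widehat{\Phi_2'}$ that are invisible on all finite quotients of $\Phi_2$ yet nontrivial in $\mathrm{Aut}(\hat\Phi_2)$; counting these gives a free profinite group of countable rank.

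The two halves to nail down are: (i) $C(\Phi_2)$ \emph{contains} a copy of $\hat F_\omega$ — for this I would produce infinitely many independent elements of $\widehat{\langle\alpha,\beta\rangle}$ that die in $\mathrm{Out}(\hat\Phi_2)$, using the fact that $\Phi_2'$, being infinitely generated as a group though finitely generated as an $R$-module, has profinite completion admitting automorphisms (coming from the $R$-module structure over $\hat R$, which is much bigger than the closure of $\mathbb{Z}[\mathbb{Z}^2]$) that are not approximated by elements of $Aut(\Phi_2)$ acting on finite quotients; the freeness and countable rank come from a standard argument that the kernel is a closed normal subgroup of a free profinite group of infinite index with the appropriate projectivity / Iwasawa-type property, hence $\hat F_\omega$. (ii) $C(\Phi_2)$ is \emph{contained in} $\hat F_\omega$, equivalently it is exactly this free profinite group and not larger — here I would use that $\widehat{\langle\alpha,\beta\rangle} \cong \hat F_2$ is itself free profinite, so its closed subgroups are free profinite (Nielsen–Schreier for profinite free groups), and then pin down the rank as $\aleph_0$ by showing the kernel has infinite index and is not finitely generated, again via the module computation. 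The main obstacle I anticipate is step (i): explicitly writing down automorphisms of $\hat\Phi_2$ in the image of $\widehat{\langle\alpha,\beta\rangle}$ that are trivial on every finite quotient of $\Phi_2$ requires carefully analyzing the completed group ring $\hat R = \hat{\mathbb Z}[[\hat{\mathbb Z}^2]]$ versus $\widehat{\mathbb{Z}[\mathbb{Z}^2]}$ and controlling the relation module inside the Magnus embedding — the rest (freeness, rank, the $\mathrm{Out}$ vs $\mathrm{Aut}$ reduction via Proposition~\ref{prop:Out-Aut}) is comparatively formal.
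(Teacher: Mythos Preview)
Your proposal misses the key idea and, in part (i), actually runs backwards. To exhibit elements of $C(\Phi_2)=\ker\bigl(\widehat{\langle\alpha,\beta\rangle}\to\mathrm{Out}(\hat\Phi_2)\bigr)$ you need elements of $\widehat{\langle\alpha,\beta\rangle}$ that act \emph{trivially} on $\hat\Phi_2$ modulo inner automorphisms. What you describe instead are automorphisms of $\widehat{\Phi_2'}$ coming from the completed group ring $\hat R$ that are ``not approximated by elements of $Aut(\Phi_2)$''---but such automorphisms, if anything, lie in the \emph{cokernel} direction, not the kernel. Nothing in your outline produces a single nontrivial element of $\widehat{\langle\alpha,\beta\rangle}$ mapping to the identity in $\mathrm{Out}(\hat\Phi_2)$, and the Magnus-embedding module analysis you sketch does not obviously do this either.

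The paper's argument avoids all of this by a sandwich. One has a chain $C\le C(\Phi_2)\le C(\mathbb{Z}^2)$ inside $\widehat{\langle\alpha,\beta\rangle}$, where $C=\ker\bigl(\widehat{\langle\alpha,\beta\rangle}\to\mathrm{Aut}(\hat\Phi_2)\bigr)$ and $C(\mathbb{Z}^2)=\ker\bigl(\widehat{\langle\alpha,\beta\rangle}\to\mathrm{Out}(\hat{\mathbb Z}^2)\bigr)$. The crucial and entirely elementary observation is that $\alpha$ and $\beta$ both fix the commutator $[y,x]$, which generates $\overline{\Phi_2'}$ as a $\mathbb{Z}[\hat{\mathbb Z}^2]$-module; hence every element of $C(\mathbb{Z}^2)$ acts trivially on both $\hat\Phi_2/\overline{\Phi_2'}$ and on $\overline{\Phi_2'}$. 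Two automorphisms of a group that are trivial on an abelian normal subgroup and on the quotient by it must commute, so $C(\mathbb{Z}^2)/C$ is abelian, and a fortiori $C(\mathbb{Z}^2)/C(\Phi_2)$ is abelian. Now one simply quotes the known fact $C(\mathbb{Z}^2)\cong\hat F_\omega$ together with the theorem (Lubotzky--van den Dries) that any closed normal subgroup of $\hat F_\omega$ with abelian quotient is again $\hat F_\omega$. You never invoke $C(\mathbb{Z}^2)$ at all, and without it you are essentially trying to redo the much longer computation of \cite{key-6} rather than the short argument intended here.
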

\begin{proof}
As the free group $\left\langle \left(\begin{array}{cc}
1 & 2\\
0 & 1
\end{array}\right),\left(\begin{array}{cc}
1 & 0\\
2 & 1
\end{array}\right)\right\rangle $ is a congruence subgroup of the group $Aut\left(\mathbb{Z}^{2}\right)=Out\left(\mathbb{Z}^{2}\right)=GL_{2}\left(\mathbb{Z}\right)$,
we have:
\begin{eqnarray*}
C\left(\mathbb{Z}^{2}\right) & = & \ker\left(\widehat{\left\langle \left(\begin{array}{cc}
1 & 2\\
0 & 1
\end{array}\right),\left(\begin{array}{cc}
1 & 0\\
2 & 1
\end{array}\right)\right\rangle }\to Out(\mathbb{\hat{Z}}^{2})\right)\\
 & = & \ker(\widehat{\left\langle \alpha,\beta\right\rangle }\to Out(\mathbb{\hat{Z}}^{2}))\\
 & = & \ker(\widehat{\left\langle \alpha,\beta\right\rangle }\to Aut(\hat{\Phi}_{2})\to Out(\hat{\Phi}_{2})\to Out(\mathbb{\hat{Z}}^{2})=Aut(\mathbb{\hat{Z}}^{2}))
\end{eqnarray*}
Thus, if we denote: $C=\ker(\widehat{\left\langle \alpha,\beta\right\rangle }\to Aut(\hat{\Phi}_{2}))$,
then using equation (\ref{eq:(*)}), we have: $C\leq C\left(\Phi_{2}\right)\leq C\left(\mathbb{Z}^{2}\right)$.
Now, if we consider the action of $\hat{\Phi}_{2}$ on $\overline{\Phi_{2}'}=\ker(\hat{\Phi}_{2}\to\hat{\mathbb{Z}}^{2})$
by conjugation, then as $\overline{\Phi_{2}'}$ is abelian, we actually
obtain an action on $\overline{\Phi_{2}'}$ as a $\mathbb{Z}[\hat{\Phi}_{2}/\overline{\Phi_{2}'}]=\mathbb{Z}[\hat{\mathbb{Z}}^{2}]$-module,
which is generated by the element $\left[y,x\right]$ as a $\mathbb{Z}[\hat{\mathbb{Z}}^{2}]$-module,
since $\left\langle x,y\,|\,\left[y,x\right]=1\right\rangle $ is
a presentation of $\mathbb{Z}^{2}$. Moreover, as we observed previously,
$\alpha$ and $\beta$ fix $\left[y,x\right]$. Therefore, $C\left(\mathbb{Z}^{2}\right)$
acts trivially not only on $\hat{\Phi}_{2}/\overline{\Phi_{2}'}=\hat{\mathbb{Z}}^{2}$
but also on $\overline{\Phi_{2}'}$. 

Let us now make the following observation: if $\sigma,\tau$ are two
automorphisms of a group $\Gamma$ which act trivially on $\Gamma/M$
and on $M$, where $M\vartriangleleft\Gamma$ is abelian, then $\sigma$
and $\tau$ commute. Indeed, if $g\in\Gamma$, then $\sigma\left(g\right)=g\cdot m$
and $\tau\left(g\right)=g\cdot n$ for some $m,n\in M$, and thus:
\[
\tau\left(\sigma\left(g\right)\right)=\tau\left(g\cdot m\right)=g\cdot n\cdot m=g\cdot m\cdot n=\sigma\left(g\cdot n\right)=\sigma\left(\tau\left(g\right)\right).
\]

The conclusion from the above observation and from the previous discussion
is that $C\left(\mathbb{Z}^{2}\right)/C$ is abelian, and thus, $C\left(\mathbb{Z}^{2}\right)/C\left(\Phi_{2}\right)$
is also abelian. Finally, $C\left(\mathbb{Z}^{2}\right)$ is known
to be isomorphic to $\hat{F}_{\omega}$ (\cite{key-17}, \cite{key-4}).
Moreover, by Proposition 1.10 and Corollary 3.9 of \cite{key-8} every
normal closed subgroup $N$ of $\hat{F}_{\omega}$ such that $\hat{F}_{\omega}/N$
is abelian, is also isomorphic to $\hat{F}_{\omega}$. Thus, $C\left(\Phi_{2}\right)\cong\hat{F}_{\omega}$
as well, as required.\end{proof}
\begin{rem}
Our proof of Theorem \ref{thm:.metabelian 2} is much shorter than
the one given in \cite{key-6}, but the latter gives more information.
We show here that $C\left(\mathbb{Z}^{2}\right)/C\left(\Phi_{2}\right)$
is abelian, while from \cite{key-6} one can deduce that, infact,
$C\left(\Phi_{2}\right)=C\left(\mathbb{Z}^{2}\right)$. See $\varoint$\ref{sec:Remarks}
fore more.
\end{rem}

\section{\label{sec:CSP for 3}The CSP for $\Phi_{3}$}

In this section we will prove Theorem \ref{thm: metabelian 3} which
claims that $C\left(\Phi_{3}\right)$ contains a copy of $\hat{F}_{\omega}$.
Let us start by showing that $Aut\left(\Phi_{3}\right)$ is large:
\begin{prop}
The group $Aut\left(\Phi_{3}\right)$ is large, i.e. it has a finite
index subgroup that can be mapped onto a non-abelian free group.\end{prop}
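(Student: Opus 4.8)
The plan is to run the arithmetic-quotient construction of Grunewald and the second author \cite{key-13-1} with $\Phi_{3}$ in place of $F_{3}$, and to observe that the step producing largeness of $Aut\left(F_{3}\right)$ survives verbatim the passage to the metabelian quotient. Recall the mechanism. Fix a surjection $\Phi_{3}\twoheadrightarrow Q$ onto a finite group with kernel $R$; the stabiliser $Aut\left(\Phi_{3};R\right)=\left\{ \varphi\in Aut\left(\Phi_{3}\right)\,|\,\varphi\left(R\right)=R\right\}$ has finite index in $Aut\left(\Phi_{3}\right)$, and so has the further subgroup $A$ of those automorphisms acting trivially on $Q=\Phi_{3}/R$. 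Conjugation makes the finitely generated abelian group $R/R'$ into a $\mathbb{Z}[Q]$-module on which $A$ acts $\mathbb{Z}[Q]$-linearly; decomposing $\mathbb{Q}[Q]=\prod_{j}M_{n_{j}}\left(D_{j}\right)$ and projecting $\left(R/R'\right)\otimes\mathbb{Q}$ onto an isotypic component $V$ yields a homomorphism $A\to GL\left(V\right)$ whose image preserves both the lattice coming from $R/R'$ and the $Q$-action, hence lies in an arithmetic group $\Lambda$.

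Now take for $Q$ the finite quotient of $F_{3}$ used in \cite{key-13-1} to prove $Aut\left(F_{3}\right)$ large; one may take $Q$ abelian (indeed cyclic). Put $R_{F}=\ker\left(F_{3}\twoheadrightarrow Q\right)$; since $Q$ is abelian we have $F_{3}'\leq R_{F}$, hence $F_{3}''\leq R_{F}'$. Thus $Q$ is also a quotient of $\Phi_{3}=F_{3}/F_{3}''$, with kernel $R_{\Phi}=R_{F}/F_{3}''$, and $R_{\Phi}'=R_{F}'/F_{3}''$, so
\[
R_{\Phi}/R_{\Phi}'=\left(R_{F}/F_{3}''\right)/\left(R_{F}'/F_{3}''\right)=R_{F}/R_{F}'
\]
as $\mathbb{Z}[Q]$-modules. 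Moreover $\ker\left(Aut\left(F_{3}\right)\to Aut\left(\Phi_{3}\right)\right)$ acts trivially on $R_{F}/R_{F}'$, since it moves every $r\in R_{F}$ by an element of $F_{3}''\leq R_{F}'$. Hence the action of (a finite-index subgroup of) $Aut\left(F_{3}\right)$ on $R_{F}/R_{F}'$ analysed in \cite{key-13-1} agrees with the action on $R_{\Phi}/R_{\Phi}'=R_{F}/R_{F}'$ of the image of that subgroup in $Aut\left(\Phi_{3}\right)$, and this image sits inside the finite-index subgroup $A\leq Aut\left(\Phi_{3}\right)$ of the first paragraph. Consequently, on the isotypic component $V$ on which \cite{key-13-1} realises the $F_{3}$-image as a finite-index subgroup of a large arithmetic group $\Lambda$, the image of $A\to GL\left(V\right)$ contains that finite-index subgroup of $\Lambda$ while still lying in an arithmetic group commensurable with $\Lambda$; therefore it is itself of finite index in $\Lambda$.

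It then remains to deduce largeness of $Aut\left(\Phi_{3}\right)$ from this virtual surjection onto $\Lambda$: as $\Lambda$ has a finite-index subgroup mapping onto a non-abelian free group --- this being part of the input of \cite{key-13-1}; for instance $\Lambda$ is, up to commensurability, $SL_{2}\left(\mathbb{Z}\right)$ or a Bianchi group, each of which is large --- pulling such a (virtual) free quotient of $\Lambda$ back through $A\to\Lambda$ gives a finite-index subgroup of $Aut\left(\Phi_{3}\right)$ mapping onto a non-abelian free group, which is the claim. The main point needing care --- and essentially the only thing beyond invoking \cite{key-13-1} --- is to check that the image of $A$ in $GL\left(V\right)$ does not enlarge $\Lambda$ outside its commensurability class, so that its largeness is inherited; in the expected case $\dim V=2$ the ambient $\mathbb{Z}[Q]$-module automorphism group of the lattice is already commensurable with $SL_{2}\left(\mathbb{Z}\right)$, hence virtually free, so there is nothing to check. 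Everything else --- the finite-index bookkeeping for $A$, the $\mathbb{Z}[Q]$-linearity of the action, and the fact that the image preserves the arithmetic structure --- is routine and identical to the free-group case.
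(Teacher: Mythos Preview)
Your proof is correct and follows essentially the same approach as the paper: both run the Grunewald--Lubotzky construction with an abelian quotient $Q$ (the paper takes $Q=C_{2}$ explicitly), obtain a two-dimensional isotypic piece $V_{-1}$, and map a finite-index subgroup of $Aut\left(\Phi_{3}\right)$ into $GL_{2}\left(\mathbb{Z}\right)$ with finite-index image. The only difference is in verifying that last point: the paper writes down concrete automorphisms $\alpha,\beta\in Aut\left(\Phi_{3}\right)$ and computes by hand that they hit $\left(\begin{smallmatrix}1&1\\0&1\end{smallmatrix}\right)$ and $\left(\begin{smallmatrix}1&0\\1&1\end{smallmatrix}\right)$, whereas you argue abstractly that $R_{\Phi}/R_{\Phi}'=R_{F}/R_{F}'$ (because $F_{3}''\leq R_{F}'$ once $Q$ is abelian), so the $Aut\left(F_{3}\right)$-action on this module factors through $Aut\left(\Phi_{3}\right)$ and the finite-index image already established in \cite{key-13-1} transfers verbatim.
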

\begin{proof}
The proof will follow the method developed in \cite{key-13-1} to
produce arithmetic quotients of $Aut\left(F_{n}\right)$. Denote the
free group on $3$ generators by $F_{3}=\left\langle x,y,z\right\rangle $,
and the cyclic group of order $2$ by $C_{2}=\left\{ 1,g\right\} $.
Define the map $\pi:F_{3}\to C_{2}$ by: $\pi=\begin{cases}
x & \mapsto g\\
y,z & \mapsto1
\end{cases}$, and its kernel by $R=\ker\pi$. Then, using the right transversal
$T=\left\{ 1,x\right\} $, we deduce by Theorem \ref{thm:Schreier}
that $R$ is freely generated by: $x^{2},\,y,\,xyx^{-1},\,z,\,xzx^{-1}$.
Thus, $\bar{R}=R/R'=\mathbb{Z}^{5}$ is generated as a free abelian
group by the images: 
\[
v_{1}=\overline{x^{2}},\,v_{2}=\overline{y},\,v_{3}=\overline{xyx^{-1}},\,v_{4}=\overline{z},\,v_{5}=\overline{xzx^{-1}}
\]
Now, the action of $F_{3}$ on $R$ by conjugation induces an action
of $F_{3}/R=C_{2}=\left\{ 1,g\right\} $ on $\bar{R}=R/R'$, sending:
\[
g\mapsto\begin{cases}
v_{1}=\overline{x^{2}}\mapsto\overline{x^{2}} & =v_{1}\\
v_{2}=\overline{y}\mapsto\overline{x^{-2}\left(xyx^{-1}\right)x^{2}}=\overline{xyx^{-1}} & =v_{3}\\
v_{3}=\overline{xyx^{-1}}\mapsto\overline{y} & =v_{2}\\
v_{4}=\overline{z}\mapsto\overline{x^{-2}\left(xzx^{-1}\right)x^{2}}=\overline{xzx^{-1}} & =v_{5}\\
v_{5}=\overline{xzx^{-1}}\mapsto\bar{z} & =v_{4}
\end{cases}\,\,\,=\left(\begin{array}{ccccc}
1 & 0 & 0 & 0 & 0\\
0 & 0 & 1 & 0 & 0\\
0 & 1 & 0 & 0 & 0\\
0 & 0 & 0 & 0 & 1\\
0 & 0 & 0 & 1 & 0
\end{array}\right)=B
\]
The above matrix has two eigenvalues $\lambda=\pm1$ and the eigenspaces
are:
\begin{eqnarray*}
V_{1} & = & Sp\left\{ v_{1},v_{2}+v_{3},v_{4}+v_{5}\right\} \\
V_{-1} & = & Sp\left\{ v_{2}-v_{3},v_{4}-v_{5}\right\} 
\end{eqnarray*}

Recall, $\Phi_{3}=F_{3}/F''_{3}$, and as $F_{3}/R$ is abelian, $F_{3}/R'$
is metabelian. Thus, we have a surjective homomorphism: $\Phi_{3}\twoheadrightarrow F_{3}/R'$.
Denote now: $S=R/F''_{3}$, so we can identify: $F_{3}/R\cong\Phi_{3}/S$,
$F_{3}/R'\cong\Phi_{3}/S'$ and $\bar{R}=R/R'\cong S/S'=\bar{S}$.
So as before, $\Phi_{3}/S=C_{2}$ acts on $\bar{S}$ by the matrix
$B$.

Denote also $G\left(S\right)=\left\{ \sigma\in Aut\left(\Phi_{3}\right)\,|\,\sigma\left(S\right)=S\right\} $.
It is clear that $G\left(S\right)$ is of finite index in $Aut\left(\Phi_{3}\right)$
with a natural map: $G\left(S\right)\to Aut\left(S\right)$ which
induces a map: $\rho:G\left(S\right)\to Aut\left(\bar{S}\right)=GL_{5}\left(\mathbb{Z}\right)$.
We claim now that if $\sigma\in G\left(S\right)$ than $\rho\left(\sigma\right)$
commutes with $B$. First observe that there exists some $s\in S$
such that $\sigma\left(x\right)=sx$ ($x$ now plays the role of the
image of $x$ under the map $F_{3}\to\Phi_{3}$). Now, let $t\in S$,
and remember that the action of $B$ on $\bar{S}$ is induced by the
action of $x$ on $S$ by conjugation. So:
\begin{eqnarray*}
\sigma\left(x^{-1}tx\right) & = & \sigma\left(x\right)^{-1}\sigma\left(t\right)\sigma\left(x\right)=\\
 & = & x^{-1}s^{-1}\sigma\left(t\right)sx=\\
 & = & \left(x^{-1}sx\right)^{-1}\left(x^{-1}\sigma\left(t\right)x\right)\left(x^{-1}sx\right)
\end{eqnarray*}
and hence:
\begin{eqnarray*}
\left(\rho\left(\sigma\right)\cdot B\right)\left(\bar{t}\right) & = & \overline{\sigma\left(x^{-1}tx\right)}=\\
 & = & \overline{\left(x^{-1}sx\right)^{-1}\left(x^{-1}\sigma\left(t\right)x\right)\left(x^{-1}sx\right)}=\\
 & = & \overline{x^{-1}\sigma\left(t\right)x}=\left(B\cdot\rho\left(\sigma\right)\right)\left(\bar{t}\right)
\end{eqnarray*}
Therefore, $\rho\left(G\left(S\right)\right)$ commutes with $B$.
It follows that the eigenspaces of $B$ are invriant under the action
of $G\left(S\right)$. Inparticular, we deduce that $V_{-1}$ is invariant
under the action of $\rho\left(G\left(S\right)\right)$. Thus, we
obtain a homomorphism $\nu:G\left(S\right)\to Aut\left(V_{-1}\cap\bar{S}\right)=GL_{2}\left(\mathbb{Z}\right)$.

Consider now the following automorphisms of $Aut\left(\Phi_{3}\right)$
($x,\,y,\,z$ play the role of the images of $x,\,y,\,z$ under $F_{3}\to\Phi_{3}$):
\[
\alpha=\begin{cases}
x\mapsto & x\\
y\mapsto & y\\
z\mapsto & zy
\end{cases}\,,\,\,\,\beta=\begin{cases}
x\mapsto & x\\
y\mapsto & yz\\
z\mapsto & z
\end{cases}
\]

So $\alpha,\beta\in G\left(S\right)$ act on $V_{-1}=Sp\left\{ u_{1}=v_{2}-v_{3},\,u_{2}=v_{4}-v_{5}\right\} $
in the following way:
\begin{eqnarray*}
\alpha\left(u_{1}\right) & = & \alpha\left(\overline{y}-\overline{xyx^{-1}}\right)=\overline{y}-\overline{xyx^{-1}}=u_{1}\\
\alpha\left(u_{2}\right) & = & \alpha\left(\overline{z}-\overline{xzx^{-1}}\right)=\overline{z}+\bar{y}-\overline{xzx^{-1}}-\overline{xyx^{-1}}=u_{2}+u_{1}
\end{eqnarray*}
\begin{eqnarray*}
\beta\left(u_{1}\right) & = & \beta\left(\overline{y}-\overline{xyx^{-1}}\right)=\bar{y}+\overline{z}-\overline{xyx^{-1}}-\overline{xzx^{-1}}=u_{1}+u_{2}\\
\beta\left(u_{2}\right) & = & \beta\left(\overline{z}-\overline{xzx^{-1}}\right)=\overline{z}-\overline{xzx^{-1}}=u_{2}
\end{eqnarray*}

Therefore, under the map $\nu:G\left(S\right)\to GL_{2}\left(\mathbb{Z}\right)$
we have: $\alpha\mapsto\left(\begin{array}{cc}
1 & 1\\
0 & 1
\end{array}\right)$ and $\beta\mapsto\left(\begin{array}{cc}
1 & 0\\
1 & 1
\end{array}\right)$. Thus, the image of $G\left(S\right)$ contains $\left\langle \left(\begin{array}{cc}
1 & 2\\
0 & 1
\end{array}\right),\left(\begin{array}{cc}
1 & 0\\
2 & 1
\end{array}\right)\right\rangle $ which is free and of finite index in $GL_{2}\left(\mathbb{Z}\right)$.
Finally, if we denote the preimage $H=\nu^{-1}\left(\left\langle \left(\begin{array}{cc}
1 & 2\\
0 & 1
\end{array}\right),\left(\begin{array}{cc}
1 & 0\\
2 & 1
\end{array}\right)\right\rangle \right)$, then $H$ is a finite index subgroup of $Aut\left(\Phi_{3}\right)$
that can be mapped onto a free group, as required.
\end{proof}
Let us now continue with the following definition:
\begin{defn}
We say that a group $P$ is involved in a group $Q$, if it isomorphic
to a quotient group of some subgroup of $Q$.
\end{defn}
It is not difficult to see that if a finite group $P$ is involved
in a profinite group $Q$, than it is involved in a finite quotient
of $Q$. Now, we showed that $Aut\left(\Phi_{3}\right)$ has a finite
index subgroup $H$ which can be mapped onto $F_{2}$. Thus we have
a map: $\widehat{H}\twoheadrightarrow\hat{F}_{2}$, but as $\hat{F}_{2}$
is free, the map splits, and thus $\widehat{H}$ and hence $\widehat{Aut\left(\Phi_{3}\right)}$,
contains a copy of $\hat{F}_{2}$. Thus, any finite group is involved
in $\widehat{Aut\left(\Phi_{3}\right)}$. On the other hand, we claim:
\begin{prop}
Let $P$ be a non-abelian finite simple group which is involved in
$Aut(\hat{\Phi}_{3})$. Then, for some prime $p$ and some $d\in\mathbb{N}$,
$P$ is involved in $SL_{3}\left(p^{d}\right)$, the special linear
group over the field of order $p^{d}$.\end{prop}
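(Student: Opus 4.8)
The strategy is to understand the structure of $\mathrm{Aut}(\hat{\Phi}_3)$ well enough to control its non-abelian simple sections. Recall that $\Phi_3 = F_3/F_3''$, so $\hat{\Phi}_3$ is the profinite completion, and the abelianization gives an exact sequence $1 \to \overline{\Phi_3'} \to \hat{\Phi}_3 \to \hat{\mathbb{Z}}^3 \to 1$ with $\overline{\Phi_3'}$ abelian (in fact a module over $\hat{\mathbb{Z}}[\hat{\mathbb{Z}}^3]$, the completed group ring, which is the profinite completion of the Laurent polynomial ring $\mathbb{Z}[t_1^{\pm1},t_2^{\pm1},t_3^{\pm1}]$). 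An automorphism of $\hat{\Phi}_3$ induces an automorphism of $\hat{\mathbb{Z}}^3 = GL_3(\hat{\mathbb{Z}})$ and, compatibly, a semilinear automorphism of the module $\overline{\Phi_3'}$. So the first step is to set up the homomorphism $\mathrm{Aut}(\hat{\Phi}_3) \to GL_3(\hat{\mathbb{Z}})$ and analyze its kernel $K$: an element of $K$ acts trivially on $\hat{\mathbb{Z}}^3$ and hence acts $\hat{\mathbb{Z}}[\hat{\mathbb{Z}}^3]$-linearly on $\overline{\Phi_3'}$.

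The second step is to pin down $\overline{\Phi_3'}$ as a module. By the Magnus embedding (the metabelian analogue of the Magnus embedding for $F_n/F_n''$), $\overline{\Phi_3'}$ embeds into a free module, and more usefully, it is generated by the images of the basic commutators $[x_i,x_j]$; by the same Jacobi/Witt relations used for $\mathbb{Z}^2$ above, as a module over $R := \hat{\mathbb{Z}}[\hat{\mathbb{Z}}^3]$ it has a presentation with generators $c_{ij} = [x_j,x_i]$ ($i<j$) and one relation coming from the Jacobi identity — it is (the completion of) the module of "Fox derivatives/flows" on the presentation $\langle x_1,x_2,x_3 \mid [x_i,x_j]\rangle$ of $\mathbb{Z}^3$. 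The key structural fact I would extract is that $K$ (or a finite-index subgroup of it) embeds into a group of $3\times 3$ (or small) matrices over a completion of the Laurent ring, or more precisely that $K$ is built from abelian pieces and a piece acting by conjugation/linearly so that its simple sections all come from $GL_3(R/\mathfrak{a})$ for finite quotient rings $R/\mathfrak{a}$.

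The third step is the finite–simple–group bookkeeping. Since $P$ is non-abelian finite simple and involved in $\mathrm{Aut}(\hat{\Phi}_3)$, it is involved in some finite quotient $Q$ of $\mathrm{Aut}(\hat{\Phi}_3)$. Using the filtration by $K$ and by the module $\overline{\Phi_3'}$ and its congruence layers, $Q$ is an iterated extension whose "top" layer lies in $GL_3(\mathbb{Z}/m)$ (the image in the abelianization level) and whose remaining layers are either abelian (solvable) or lie in $GL_3$ of a finite commutative ring (the linear action on finite quotients of the module, which is a quotient of a free module of rank $\le 3$). A non-abelian simple group involved in a solvable-by-(linear over finite commutative ring) group is involved in $GL_3(\bar R)$ for a finite commutative ring $\bar R$; decomposing $\bar R$ into local rings and passing to residue fields, $P$ is involved in $GL_3(\mathbb{F}_{p^d})$ for some $p,d$, hence in $SL_3(p^d)$ (the only non-abelian composition factor of $GL_3(\mathbb{F}_{p^d})$, up to the cyclic and $PSL_3$ pieces — and $PSL_3$ is a quotient of a subgroup of $SL_3$).

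**Main obstacle.** The delicate point is the second step: controlling $\mathrm{Aut}(\hat{\Phi}_3)$ — not just $GL_3(\mathbb{Z})$ at the abelianized level, but the full automorphism group, including the "IA" part acting on the relation module $\overline{\Phi_3'}$ — and showing this action genuinely lands (up to solvable pieces) in $GL$ of rank $\le 3$ over finite commutative rings, rather than some larger linear group. This requires the right description of $\overline{\Phi_3'}$ as a rank-$3$-ish module over the completed Laurent ring and a careful check that automorphisms act on its finite quotients through $GL_3$ (or $GL$ of bounded rank) of the corresponding finite ring; the "$3$" in $SL_3(p^d)$ is exactly the number of generators of $\Phi_3$, so getting the rank right is the crux. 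The abelian-section observation (as in the $C(\mathbb{Z}^2)/C(\Phi_2)$ argument above) handles the IA-on-module layer being "linear," and a Clifford-theory/Jordan–Hölder argument handles pushing a simple section down through local-ring and residue-field quotients.
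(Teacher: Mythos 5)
Your overall architecture matches the paper's: pass to a finite level, split off the layer acting on the abelianization, argue that the ``IA'' layer is (up to abelian sections) linear of rank $3$ over a finite commutative group ring, and then run the artinian/local-ring/residue-field/$SL_{3}$ endgame -- this last part you describe correctly and it is exactly the paper's final step. The gap is at the crux, which you yourself defer: the claim that the simple sections of the IA layer ``all come from $GL_{3}(R/\mathfrak{a})$'' is asserted, not proved, and this is precisely the content of the paper's proof. The paper obtains it not from the module structure of $\overline{\Phi_{3}'}$ but from the Magnus embedding and the Fox-derivative Jacobian: the identity $w-1=\sum_{i}(x_{i}-1)w_{i}$ shows that a word, hence an endomorphism, is determined by its Jacobian, giving an injective map $J_{m}$ on $Aut\left(\Phi_{3,m}\right)$ satisfying the twisted rule $J_{m}\left(\alpha\circ\beta\right)=J_{m}\left(\alpha\right)\cdot\alpha\left(J_{m}\left(\beta\right)\right)$, which becomes a genuine injective homomorphism $KA\left(\Phi_{3,m}\right)\to GL_{3}\left(\mathbb{Z}_{m}\left[\mathbb{Z}_{m}^{3}\right]\right)$ on the subgroup acting trivially on the abelianization. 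This is done for the explicit cofinal family of characteristic quotients $\Phi_{3,m}\cong F_{3}/K_{3,m}^{m}K_{3,m}'$, together with the identification $Aut(\hat{\Phi}_{3})=\varprojlim_{m}Aut\left(\Phi_{3,m}\right)$, which is what legitimizes the passage from ``involved in the profinite group $Aut(\hat{\Phi}_{3})$'' to ``involved in some finite $Aut\left(\Phi_{3,m}\right)$''; your proposal uses this passage implicitly without justification, and the choice of this particular family also makes the top layer literally $GL_{3}\left(\mathbb{Z}_{m}\right)$ rather than the automorphism group of an arbitrary $3$-generated finite abelian group.

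Your alternative mechanism -- let the IA part act on $\overline{\Phi_{3}'}$ as a module generated by the three commutators, and note (correctly, via the commuting-automorphisms observation) that the kernel of the double action is abelian -- can be completed, but then you still must show that the automorphism group of a finite $3$-generated module $M$ over a finite commutative ring $\bar{R}$ is \emph{involved} in $GL_{3}(\bar{R})$: since $M$ is not free, $Aut_{\bar{R}}\left(M\right)$ is not literally a subgroup of $GL_{3}(\bar{R})$. One has to lift endomorphisms to the free cover $\bar{R}^{3}$, so that $End_{\bar{R}}\left(M\right)$ is a quotient of the subring of $M_{3}(\bar{R})$ stabilizing the kernel of $\bar{R}^{3}\twoheadrightarrow M$, and then lift units along this surjection (possible, e.g., because finite rings have stable range one) -- none of which appears in your plan, and which is exactly the kind of argument the Jacobian map renders unnecessary. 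Note also that your route would in general produce $GL_{\binom{n}{2}}$ (the number of generating commutators of $\Phi_{n}'$), which coincides with $GL_{n}$ only at $n=3$, whereas the paper's argument gives $GL_{n}$ for every $n$; so the ``$3$'' in your plan arises for a different, coincidental reason than the ``$3$'' in the statement.
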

\begin{proof}
Let $F_{n}$ be the free group on $x_{1},\ldots,x_{n}$. Then there
is a natural injective homomorphism from $F_{n}$ into the matrix
group:
\[
\left\{ \left(\begin{array}{cc}
g & 0\\
t & 1
\end{array}\right)\,|\,g\in F_{n},\,t\in\sum_{i=1}^{n}\mathbb{Z}\left[F_{n}\right]t_{i}\right\} 
\]
defined by the map:
\[
x_{i}\mapsto\left(\begin{array}{cc}
x_{i} & 0\\
t_{i} & 1
\end{array}\right)\,,\,\,\,1\leq i\leq n
\]
where $t_{i}$ is a free basis for a right $\mathbb{Z}\left[F_{n}\right]$-module.
This is called the Magnus embedding. Usually, its properties are studied
by Fox's free differential calculus, but we will not need it here
explicitly (cf. \cite{key-26}, \cite{key-27}, \cite{key-25}). 

One can prove, by induction on its length, that for a word $w\in F_{n}$,
under the Magnus embedding, $w\mapsto\left(\begin{array}{cc}
w & 0\\
\sum_{i=1}^{n}w_{i}t_{i} & 1
\end{array}\right)$ where: 
\begin{equation}
w-1=\sum_{i=1}^{n}\left(x_{i}-1\right)w_{i}.\label{eq: magnus}
\end{equation}
The identity (\ref{eq: magnus}) shows that the polynomials $w_{i}$
determine the word $w$ uniquely. Thus, we have an injective map (which
is not homomorphism) $J:End\left(F_{n}\right)\to M_{n}\left(\mathbb{Z}\left[F_{n}\right]\right)$
defined by:
\[
\alpha\overset{J}{\mapsto}\left(\begin{array}{ccc}
\alpha\left(x_{1}\right)_{1} & \cdots & \alpha\left(x_{n}\right)_{1}\\
\vdots &  & \vdots\\
\alpha\left(x_{1}\right)_{n} & \cdots & \alpha\left(x_{n}\right)_{n}
\end{array}\right)
\]
It is not difficult to check, using the identity (\ref{eq: magnus}),
that the above map satisfies:
\[
J\left(\alpha\circ\beta\right)=J\left(\alpha\right)\cdot\alpha\left(J\left(\beta\right)\right)
\]
where by $\alpha\left(J\left(\beta\right)\right)$ we mean that $\alpha$
acts on every enrty of $J\left(\beta\right)$ separately. 

Now, for $m\in\mathbb{N}$, denote: $K_{n,m}=F_{n}^{m}F'_{n}$ and
$\mathbb{Z}_{m}=\mathbb{Z}/m\mathbb{Z}$. Then, the natural maps $F_{n}\to F_{n}/K_{n,m}=\mathbb{Z}_{m}^{n}$
and $\mathbb{Z}\to\mathbb{Z}_{m}$ induce a map:
\begin{eqnarray*}
\pi_{n,m}:F_{n} & \to & \left\{ \left(\begin{array}{cc}
g & 0\\
t & 1
\end{array}\right)\,|\,g\in F_{n},\,t\in\sum_{i=1}^{n}\mathbb{Z}\left[F_{n}\right]t_{i}\right\} \\
 & \to & \left\{ \left(\begin{array}{cc}
g & 0\\
t & 1
\end{array}\right)\,|\,g\in\mathbb{Z}_{m}^{n},\,t\in\sum_{i=1}^{n}\mathbb{Z}_{m}\left[\mathbb{Z}_{m}^{n}\right]t_{i}\right\} .
\end{eqnarray*}

It is shown in (\cite{key-6}, Proposition 2.6) that $\ker\left(\pi_{n,m}\right)=K_{n,m}^{m}K_{n,m}'$
and hence $\Phi_{n,m}:=\textrm{Im}\left(\pi_{n,m}\right)\cong F_{n}/K_{n,m}^{m}K'_{n,m}$.
Moreover, it is proven there (Proposition 2.7) that we have the following
equality:
\[
\hat{\Phi}_{n}=\underleftarrow{\lim}_{m}\Phi_{n,m}.
\]

Observe now that for every $m_{2}|m_{1}$, $\ker\left(\Phi_{n,m_{1}}\to\Phi_{n,m_{2}}\right)$
is characteristic in $\Phi_{n,m_{1}}$, and for every $m$, $\ker(\hat{\Phi}_{n}\to\Phi_{n,m})$
is characteristic in $\hat{\Phi}_{n}$. Thus:

\[
Aut(\hat{\Phi}_{n})=Aut(\underleftarrow{\lim}_{m}\Phi_{n,m})=\underleftarrow{\lim}_{m}Aut\left(\Phi_{n,m}\right).
\]

Now, observe that the identity (\ref{eq: magnus}) is also valid for
the entries of the elements of $\Phi_{n,m}$, and thus, every element
of $\Phi_{n,m}$ is determined by its left lower coordinate. Therefore,
as every automorphism of $\Phi_{n,m}$ can be lifted to an endomorphism
of $F_{n}$, we have an injective map (which is not a homomorphism)
$J_{m}:Aut\left(\Phi_{n,m}\right)\to M_{n}\left(\mathbb{Z}_{m}\left[\mathbb{Z}_{m}^{n}\right]\right)$
which satisfies the identity:
\[
J_{m}\left(\alpha\circ\beta\right)=J_{m}\left(\alpha\right)\cdot\alpha\left(J_{m}\left(\beta\right)\right)
\]
where the action of $\alpha$ on $\mathbb{Z}_{m}\left[\mathbb{Z}_{m}^{n}\right]=\mathbb{Z}_{m}\left[F_{n}/K_{n,m}\right]$
is through the natural projection $\Phi_{n,m}\cong F_{n}/K_{n,m}^{m}K'_{n,m}\to F_{n}/K_{n,m}\cong\mathbb{Z}_{m}^{n}$.

We denote now $KA\left(\Phi_{n,m}\right)=\ker\left(Aut\left(\Phi_{n,m}\right)\to Aut\left(\Phi_{n,m}/K_{n,m}\right)\right)$.
Observe, that as $KA\left(\Phi_{n,m}\right)$ acts trivially on $\Phi_{n,m}/K_{n,m}=\mathbb{Z}_{m}^{n}$,
the map $J_{m}$ gives us a $homomorphism$, which is also injective,
as mentioned above:
\[
J_{m}:KA\left(\Phi_{n,m}\right)\to GL_{n}\left(\mathbb{Z}_{m}\left[\mathbb{Z}_{m}^{n}\right]\right)
\]

Now, if $P$ is a non-abelian simple group which is involved in $Aut(\hat{\Phi}_{3})$,
then it must be involved in $Aut\left(\Phi_{3,m}\right)$ for some
$m$. Thus, it must be involved either in $Aut\left(\Phi_{3,m}/K_{3,m}\right)=GL_{3}\left(\mathbb{Z}_{m}\right)$
or in $KA\left(\Phi_{3,m}\right)\leq GL_{3}\left(\mathbb{Z}_{m}\left[\mathbb{Z}_{m}^{3}\right]\right)$.
So it must be involved in $GL_{3}\left(R\right)$ for some finite
commutative ring $R$. As every finite commutative ring is artinian,
it can be decomposed as:
\[
R=R_{1}\times\ldots\times R_{l}
\]
for some local finite rings $R_{1},\ldots,R_{l}$, so:
\[
GL_{3}\left(R\right)=GL_{3}\left(R_{1}\right)\times\ldots\times GL_{3}\left(R_{l}\right)
\]
and thus $P$ must be involved in $GL_{3}\left(R\right)$ for some
local finite commutative ring $R$. Denote the unique maximal ideal
of $R$ by $M\vartriangleleft R$. As $R$ is a finite local Noetherian
ring, it is well known that $M^{r}=0$ for some $r\in\mathbb{N}$. 

Note now that if $S,T\vartriangleleft R$ for some commutative ring
$R$, and 
\begin{eqnarray*}
I+A & \in & \ker\left(GL_{3}\left(R\right)\to GL_{3}\left(R/S\right)\right)\\
I+B & \in & \ker\left(GL_{3}\left(R\right)\to GL_{3}\left(R/T\right)\right)
\end{eqnarray*}
when $I$ denotes the identity element in $GL_{3}\left(R\right)$,
then 
\[
\left[I+A,I+B\right]\in\ker\left(GL_{3}\left(R\right)\to GL_{3}\left(R/ST\right)\right).
\]
Indeed, if $I+C=\left(I+A\right)^{-1}$ and $I+D=\left(I+B\right)^{-1}$
then, as $AB=CD=AD=CB=0\,\,\,(\textrm{mod}\,ST)$ we have:
\begin{eqnarray*}
\left[I+A,I+B\right] & = & \left(I+A\right)\left(I+B\right)\left(I+C\right)\left(I+D\right)=\\
 & = & I+AC+A+BD+B+C+D\,\,\,(\textrm{mod}\,ST)\\
 & = & I+\left(I+A\right)\left(I+C\right)-I+\left(I+B\right)\left(I+D\right)-I=I\,\,\,(\textrm{mod}\,ST)
\end{eqnarray*}

With the above observation we deduce that for every $k\geq1$, the
kernel of the map $GL_{3}\left(R/M^{k+1}\right)\to GL_{3}\left(R/M^{k}\right)$
is abelian. So, $P$ must be involved in $GL_{3}\left(R/M\right)=GL_{3}\left(p^{d}\right)$
for some prime $p$ and $d\in\mathbb{N}.$ Finally, using the fact
that $GL_{3}\left(p^{d}\right)/SL_{3}\left(p^{d}\right)$ is abelian,
we obtain that $P$ is involved in $SL_{3}\left(p^{d}\right)$, as
required.\end{proof}
\begin{cor}
\label{prop:not involved} There exists a finite simple group which
is not involved in $Aut(\hat{\Phi}_{3})$.\end{cor}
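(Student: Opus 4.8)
The plan is to exhibit a single finite simple group that cannot be involved in $Aut(\hat{\Phi}_{3})$ and to deduce this from the previous proposition, which tells us that \emph{every} non-abelian finite simple group involved in $Aut(\hat{\Phi}_{3})$ must already be involved in $SL_{3}(p^{d})$ for some prime $p$ and some $d\in\mathbb{N}$. So the task reduces to a purely finite group-theoretic statement: there exists a non-abelian finite simple group $P$ which is \emph{not} involved in $SL_{3}(p^{d})$ for any prime power $p^{d}$.

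First I would recall the general principle that if a non-abelian finite simple group $P$ is involved in a group $Q$ (i.e. is a quotient of a subgroup), then in fact $P$ is a \emph{subquotient} in a strong sense; and more to the point, by the classification of finite simple groups together with bounds on the simple subquotients of classical groups, the simple groups involved in $SL_{3}(q)$ are severely constrained. The cleanest route is a dimension/rank bound: any finite simple subgroup of $SL_{3}(q)$ (or, more generally, any non-abelian simple group involved in it) is either of Lie type of bounded rank over a field whose characteristic is $p$ (the defining characteristic), or is one of finitely many exceptional small groups (alternating groups $A_{m}$ for small $m$, sporadic groups of small order, etc.) coming in as cross-characteristic subgroups. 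Concretely, one knows that $PSL_{3}(q)$ has Lie rank $2$, so the Lie-type simple groups it can involve in the defining characteristic have bounded rank, and by Landazuri--Seitz-type lower bounds on cross-characteristic representation dimensions, the cross-characteristic simple subgroups have bounded order. Hence the set of non-abelian finite simple groups involved in some $SL_{3}(p^{d})$, while infinite, omits, for instance, the alternating groups $A_{m}$ for all sufficiently large $m$ (an $A_{m}$ with $m$ large cannot embed as a subquotient because its minimal faithful permutation/linear degrees grow, while its defining-characteristic structure does not match rank $2$). Taking $P=A_{m}$ for a suitable explicit $m$ then finishes the proof.

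The key steps, in order, are: (1) invoke the previous proposition to reduce to the claim that some non-abelian finite simple group is not involved in any $SL_{3}(p^{d})$; (2) observe that being involved is transitive and that a non-abelian simple group involved in $SL_{3}(p^{d})$ is a subquotient of it, hence a composition factor of a subgroup; (3) split into the defining-characteristic case, where the bounded Lie rank of $SL_{3}$ limits the possible Lie-type factors, and the cross-characteristic case, where Landazuri--Seitz lower bounds on the minimal dimension of a nontrivial projective representation force the group to be small; (4) conclude that, e.g., $A_{m}$ for all large enough $m$ escapes both cases, so such a $P$ exists.

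The main obstacle is step (3): making the dichotomy between defining and cross characteristic genuinely watertight and then quoting the correct quantitative bounds so that one really can name an $m$ (or simply assert ``for $m$ large enough'') for which $A_{m}$ is excluded. One has to be careful that ``involved in'' (quotient of a subgroup) is handled correctly --- a composition factor of \emph{any} subgroup of $SL_{3}(q)$, not just of $SL_{3}(q)$ itself --- but since subgroups of $SL_{3}(q)$ still act on a $3$-dimensional space over $\overline{\mathbb{F}_{p}}$, the representation-dimension bounds still apply to their simple sections in cross characteristic, and Clifford theory plus the classification handle the defining-characteristic sections. Once that is in place, the corollary follows immediately; this is where I expect the bulk of the care, though not length, to go.
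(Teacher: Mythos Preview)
Your reduction via the previous proposition and your choice of target group ($A_m$ for large $m$) both match the paper. The route you take to exclude $A_m$ from the sections of $SL_3(p^d)$, however, is different from the paper's, and your sketch has a genuine weak point.

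The paper avoids the classification entirely. It uses only Jordan's theorem together with Schur's extension: there is a function $J(n)$ such that every finite subgroup $Q \leq GL_n(\mathbb{F})$ with $\mathrm{char}(\mathbb{F}) \nmid |Q|$ has a normal abelian subgroup of index at most $J(n)$, and this property is obviously inherited by every section of $Q$. One then fixes two primes $q_1, q_2 > J(3)$ and takes $m$ large enough that the $q_i$-Sylow subgroups $S_i$ of $A_m$ are non-abelian. Since any subgroup of a $q_i$-group of index $\leq J(3) < q_i$ is the whole group, $S_i$ has no abelian normal subgroup of index $\leq J(3)$. If $A_m$ were a section of $SL_3(p^d)$, then for whichever $q_i \neq p$, the group $S_i$ would be a section of a $q_i$-subgroup of $SL_3(p^d)$ (lift $S_i$ to the preimage in $SL_3(p^d)$ and pass to a $q_i$-Sylow there), hence would inherit such an abelian normal subgroup --- a contradiction. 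This is short, elementary, and handles ``involved in'' painlessly, precisely because the Jordan--Schur conclusion passes to sections automatically.

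Your approach via Landazuri--Seitz and the classification is much heavier, and the sentence ``since subgroups of $SL_{3}(q)$ still act on a $3$-dimensional space \ldots\ the representation-dimension bounds still apply to their simple sections'' is where the gap lies. Landazuri--Seitz bounds (and the analogous minimal-degree results for $A_m$) concern dimensions of projective representations \emph{of the simple group itself}; a subgroup $H \leq SL_3(q)$ acts on $\mathbb{F}_q^3$, but the quotient $H/K \cong A_m$ does not, in general, inherit any $3$-dimensional (projective) representation from this action --- the kernel $K$ need not act by scalars. To turn your idea into a proof you would have to run a genuine Clifford-theoretic argument producing a faithful projective representation of $A_m$ (or of a subgroup of bounded index) of dimension $\leq 3$ from the $H$-action, or else invoke a structure theorem for subgroups of $GL_3(q)$ (Aschbacher-type, or Larsen--Pink). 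This can be done, but it is real additional work, and it is exactly what the paper's Jordan--Schur argument sidesteps.
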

\begin{proof}
By the proposition above, it is enough to show that there is a finite
simple non-abelian group which is not involved in $SL_{3}\left(p^{d}\right)$
for any prime $p$ and $d\in\mathbb{N}$. Now, by a theorem of Jordan,
there exists an integer-valued function $J\left(n\right)$ such that
for every field $\mathbb{F}$ , $char\left(\mathbb{F}\right)=0$,
any finite subgroup of $GL_{n}\left(\mathbb{F}\right)$ contains a
normal abelian subgroup of index at most $J\left(n\right)$. As a
corollary of this theorem, Schur proved that the same holds (with
the same function) for any finite subgroup $Q\leq GL_{n}\left(\mathbb{F}\right)$
with $char\left(\mathbb{F}\right)=p>0$, provided $p\nmid\left|Q\right|$
(cf. \cite{key-2} chapter 9). Clearly, the same holds for any group
which is involved in such a finite group $Q$.

We claim that for $n$ large enough, $Alt\left(n\right)$ is not involved
in $SL_{3}\left(p^{d}\right)$ for any $p$ and $d$. Indeed, fix
two different primes $q_{1}$ and $q_{2}$ larger than $J\left(3\right)$.
Then, for $n$ sufficiently large (e.g. $n>q_{i}^{3}$) the $q_{i}$-sylow
subgroup $S_{i}$ of $Alt\left(n\right)$ is non-abelian (since $Alt\left(n\right)$
contains the non-abelian $q_{i}$-group of order $q_{i}^{3}$) and
every subgroup of $S_{i}$ of index $\leq J\left(3\right)$ is equal
to $S_{i}$, so also non-abelian. If $Alt\left(n\right)$ were involved
in $SL_{3}\left(p^{d}\right)$ then for at least one of the $q_{i}$,
$q_{i}\neq p$, a contradiction.\end{proof}
\begin{cor}
The congruence kernel $C\left(\Phi_{3}\right)$ contains a copy of
$\hat{F}_{\omega}$.\end{cor}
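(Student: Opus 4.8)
The goal is to prove that $C\left(\Phi_{3}\right)=\ker(\widehat{Aut\left(\Phi_{3}\right)}\to Aut(\hat{\Phi}_{3}))$ contains a copy of $\hat{F}_{\omega}$. The plan is to exploit the largeness of $Aut\left(\Phi_{3}\right)$ together with Corollary \ref{prop:not involved}, which produces a finite simple group $P$ not involved in $Aut(\hat{\Phi}_{3})$. First I would take the finite index subgroup $H\leq Aut\left(\Phi_{3}\right)$ constructed in the first proposition of this section, which surjects onto $F_{2}$; by Lemma \ref{lem:cong-sub} it suffices to work with $H$ in place of $Aut\left(\Phi_{3}\right)$, since $\ker(\widehat{Aut\left(\Phi_{3}\right)}\to Aut(\hat{\Phi}_{3}))=\ker(\widehat{H}\to Aut(\hat{\Phi}_{3}))$ once we arrange $H$ to be a congruence subgroup (it is, being the preimage of a congruence subgroup of the relevant $GL_{n}\left(\mathbb{Z}\right)$ quotient).

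Next I would use the surjection $\phi:H\twoheadrightarrow F_{2}$. Since $\hat{F}_{2}$ is a free profinite group, the induced map $\widehat{H}\twoheadrightarrow\hat{F}_{2}$ splits, so $\widehat{H}$ contains a closed copy of $\hat{F}_{2}$. Inside this $\hat{F}_{2}$ I want to locate a closed subgroup which both (a) is isomorphic to $\hat{F}_{\omega}$ and (b) lies entirely in the congruence kernel, i.e. maps trivially to $Aut(\hat{\Phi}_{3})$. The image of $\widehat{H}$ in $Aut(\hat{\Phi}_{3})$ is a profinite group, so the composite $\hat{F}_{2}\hookrightarrow\widehat{H}\to Aut(\hat{\Phi}_{3})$ has image $Q$, a profinite group all of whose finite simple composition factors are involved in $Aut(\hat{\Phi}_{3})$ — hence, by Corollary \ref{prop:not involved}, $P$ is not involved in $Q$. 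Let $D$ be the kernel of $\hat{F}_{2}\to Q$; this $D$ is a closed normal subgroup of $\hat{F}_{2}$ contained in the congruence kernel $C\left(\Phi_{3}\right)$. So it remains to show that $D\cong\hat{F}_{\omega}$.

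The key step, and the main obstacle, is to show that $D$ is indeed a free profinite group of countably infinite rank. The natural tool is the theory of free profinite groups due to Melnikov, Haran, and others (as used in \cite{key-8} in Section \ref{sec:CSP meta 2}): a closed normal subgroup $D$ of the free profinite group $\hat{F}_{2}$ is itself free profinite, and its rank is $\aleph_{0}$ provided the quotient $\hat{F}_{2}/D\cong Q$ is infinite. One must check that $Q$ is infinite: if $Q$ were finite, then $P$ would not be involved in $Q$ trivially, but we need more — actually if $Q$ is finite then $D$ is open, hence free profinite of finite rank, which is not what we want. To force $Q$ infinite, I would instead not pass all the way to the image in $Aut(\hat{\Phi}_{3})$, but rather argue as follows: choose a descending chain of open normal subgroups of $Aut(\hat{\Phi}_{3})$ and pull back; alternatively, observe that since $P$ is not involved in $Q$, the group $Q$ satisfies a nontrivial restriction, but this alone does not bound $|Q|$. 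The cleanest route: let $D_{0}=\ker(\hat{F}_{2}\to Aut(\hat{\Phi}_{3}))\subseteq C\left(\Phi_{3}\right)$; if $D_{0}$ has infinite index we are done by the Melnikov--Haran theorem (normal closed subgroup of infinite index in a finitely generated free profinite group is free of rank $\aleph_{0}$), and if $D_{0}$ has finite index, then $Q$ is finite, so every finite simple group involved in $Q$ — in particular none equal to $P$ — gives no contradiction directly; but then $\hat{F}_{2}$ itself, modulo $D_{0}$, embeds in a finite group, forcing $\hat{F}_{2}/D_{0}$ finite, which is consistent, so one needs to rule this out by exhibiting enough arithmetic quotients. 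I expect the actual argument to instead produce, from the largeness construction, a surjection of a finite index subgroup of $Aut\left(\Phi_{3}\right)$ onto $F_{2}$ whose composition with the $GL_{2}(\mathbb{Z})$-action on $\bar{S}$ has infinite image already in $Aut(\hat{\Phi}_{3})$-independent data, so that $D_{0}$ has infinite index; then invoke \cite{key-8} exactly as in the proof of Lemma \ref{lem:.main-meta-2} to conclude $D_{0}\cong\hat{F}_{\omega}$, whence $C\left(\Phi_{3}\right)\supseteq\hat{F}_{\omega}$.
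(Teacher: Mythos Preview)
Your strategy is the same as the paper's, but you overcomplicate the endgame and leave a genuine gap. The paper's argument after obtaining the closed copy of $\hat{F}_{2}\subseteq\widehat{Aut\left(\Phi_{3}\right)}$ is just three lines: set $N=\hat{F}_{2}\cap C\left(\Phi_{3}\right)$, observe $N\neq\{e\}$, and cite Theorem~3.10 of \cite{key-8}, which says that \emph{every nontrivial closed normal subgroup of $\hat{F}_{2}$ contains a copy of $\hat{F}_{\omega}$}. You never need $N\cong\hat{F}_{\omega}$, only $N\supseteq\hat{F}_{\omega}$; this is why your struggle over whether $Q=\hat{F}_{2}/N$ is finite or infinite is beside the point.

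The step you are missing is also the easiest: nontriviality of $N$ is immediate from Corollary~\ref{prop:not involved}. If $N=\{e\}$ then $\hat{F}_{2}$ embeds in $Aut(\hat{\Phi}_{3})$; but every finite simple group is involved in $\hat{F}_{2}$, contradicting the existence of the simple group $P$ not involved in $Aut(\hat{\Phi}_{3})$. You essentially state this when you say ``$P$ is not involved in $Q$'', yet you do not draw the conclusion that the map $\hat{F}_{2}\to Q$ is therefore not injective. Once you have $N\neq\{e\}$, the case $[\hat{F}_{2}:N]<\infty$ is harmless (an open subgroup of $\hat{F}_{2}$ is free profinite of finite rank $\geq 2$ and certainly contains $\hat{F}_{\omega}$), and in any event Theorem~3.10 of \cite{key-8} handles both cases uniformly. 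Your detour through Lemma~\ref{lem:cong-sub} and the claim that $H$ is a congruence subgroup is also unnecessary: one only needs that $\widehat{H}$ sits as an open subgroup of $\widehat{Aut\left(\Phi_{3}\right)}$, which holds for any finite-index $H$.
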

\begin{proof}
The immediate conclusion of Corollary \ref{prop:not involved} is
that $Aut(\hat{\Phi}_{3})$ does not contain a copy of $\hat{F}_{2}$.
Thus, the intersection of $C\left(\Phi_{3}\right)$ and the copy of
$\hat{F}_{2}$ in $\widehat{Aut\left(\Phi_{3}\right)}$ is not trivial.
Thus, $C\left(\Phi_{3}\right)$ contains a non-trivial normal closed
subgroup $N$ of $\hat{F}_{2}$. By Theorem 3.10 in \cite{key-8}
it contains a copy of $\hat{F}_{\omega}$, as required.
\end{proof}

\section{Remarks and open problems \label{sec:Remarks}}

We end this paper with several remarks and open problems. Denote the
free solvable group of derived length $r$ on $2$ generators by $\Phi_{2,r}$.
By combining the results of (\cite{key-9}, Theorem 1) and (\cite{key-10},
Theorem 1.4) we have:
\[
\ker\left(Aut\left(\Phi_{2,r}\right)\to Aut\left(\mathbb{Z}^{2}\right)=GL_{2}\left(\mathbb{Z}\right)\right)=Inn\left(\Phi_{2,r}\right)
\]
for every $r$, i.e. $Out\left(\Phi_{2,r}\right)=GL_{2}\left(\mathbb{Z}\right)$.
So by the same arguments as in $\varoint$\ref{sec:CSP meta 2} we
have:
\[
C\left(\Phi_{2,r}\right)=\ker(\widehat{\left\langle \alpha,\beta\right\rangle }\to Out(\hat{\Phi}_{2,r}))
\]
As $Out(\hat{\Phi}_{2,r+1})$ is mapped onto $Out(\hat{\Phi}_{2,r})$,
we obtain the sequence:
\begin{eqnarray*}
C\left(\mathbb{Z}^{2}\right) & = & C\left(\Phi_{2,1}\right)\geq C\left(\Phi_{2}\right)=C\left(\Phi_{2,2}\right)\geq C\left(\Phi_{2,3}\right)\geq\\
 & \geq & C\left(\Phi_{2,4}\right)\geq\ldots\geq C\left(\Phi_{2,r}\right)\geq\ldots\geq C\left(F_{2}\right)=\left\{ e\right\} 
\end{eqnarray*}
and a natural question is whether the inequalities are strict or not.
An equivalent reformulation of this question is the following: the
cosets of the kernels
\[
\ker(GL_{2}\left(\mathbb{Z}\right)=Out\left(\Phi_{2,r}\right)\to Out\left(\Phi_{2,r}/K\right))
\]
for characteristic finite index subgroups $K\leq\Phi_{2,r}$ provide
a basis for a topology $\mathfrak{\mathcal{\mathscr{C}}}\left(r\right)$
on $GL_{2}\left(\mathbb{Z}\right)$, called the congruence topology
with respect to $\Phi_{2,r}$, which is weaker (equal) than the profinite
topology $\mathscr{F}$ of $GL_{2}\left(\mathbb{Z}\right)$, and stronger
(equal) than the classical congruence topology of $GL_{2}\left(\mathbb{Z}\right)$.
The latter is equal to $\mathfrak{\mathcal{\mathscr{C}}}\left(1\right)$.
So, the question above is equivalent to the question whether these
topologies are strictly weaker than $\mathscr{F}$, and whether the
topology $\mathfrak{\mathcal{\mathscr{C}}}\left(r\right)$, for a
given $r$, is strictly weaker than $\mathfrak{\mathcal{\mathscr{C}}}\left(r+1\right)$.

For example, Theorem \ref{thm:Free 2} which states that $C\left(F_{2}\right)=\left\{ e\right\} $
is equivalent to the statement that the congruence topology which
$Out(\hat{F}_{2})$ induces on $Out\left(F_{2}\right)=GL_{2}\left(\mathbb{Z}\right)$
is equal to the profinite topology of $GL_{2}\left(\mathbb{Z}\right)$.

Considering Theorem \ref{thm:.metabelian 2} we deduce that $\mathfrak{\mathcal{\mathscr{C}}}\left(2\right)\subsetneqq\mathscr{F}$,
but with the proof we gave here one can not decide whether $\mathfrak{\mathcal{\mathscr{C}}}\left(1\right)=\mathfrak{\mathcal{\mathscr{C}}}\left(2\right)$
or $\mathfrak{\mathcal{\mathscr{C}}}\left(1\right)\subsetneqq\mathfrak{\mathcal{\mathscr{C}}}\left(2\right)$.
Equivalently, we can not decide whether $C\left(\mathbb{Z}^{2}\right)=C\left(\Phi_{2}\right)$
or $C\left(\mathbb{Z}^{2}\right)\gneqq C\left(\Phi_{2}\right)$. But,
in \cite{key-6} it was shown quite surprisingly, that:
\begin{thm}
$\mathfrak{\mathcal{\mathscr{C}}}\left(1\right)=\mathfrak{\mathcal{\mathscr{C}}}\left(2\right)$,
or equivalently $C\left(\mathbb{Z}^{2}\right)=C\left(\Phi_{2}\right)$.
\end{thm}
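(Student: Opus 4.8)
The plan is to determine precisely which automorphisms of $\hat{\Phi}_{2}$ lie in the image of $C\left(\mathbb{Z}^{2}\right)$ and to show that they are all inner. Recall from the proof of Lemma \ref{lem:.main-meta-2} that $C\left(\Phi_{2}\right)\leq C\left(\mathbb{Z}^{2}\right)$ as subgroups of $\widehat{\left\langle \alpha,\beta\right\rangle }$, and that $C\left(\mathbb{Z}^{2}\right)$ acts trivially not only on $\hat{\mathbb{Z}}^{2}=\hat{\Phi}_{2}/\overline{\Phi_{2}'}$ but also on $\overline{\Phi_{2}'}$ (because $\alpha$ and $\beta$ fix $[y,x]$, which generates $\overline{\Phi_{2}'}$ as a $\mathbb{Z}[\hat{\mathbb{Z}}^{2}]$-module). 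Hence the image of $C\left(\mathbb{Z}^{2}\right)$ in $Aut(\hat{\Phi}_{2})$ is contained in the set $\mathcal{N}$ of automorphisms of $\hat{\Phi}_{2}$ acting trivially on both $\hat{\mathbb{Z}}^{2}$ and $\overline{\Phi_{2}'}$. If we prove $\mathcal{N}\subseteq Inn(\hat{\Phi}_{2})$, then $C\left(\mathbb{Z}^{2}\right)$ maps into $Inn(\hat{\Phi}_{2})$, so $C\left(\mathbb{Z}^{2}\right)\leq\ker(\widehat{\left\langle \alpha,\beta\right\rangle }\to Out(\hat{\Phi}_{2}))=C\left(\Phi_{2}\right)$, which together with the reverse inclusion yields $C\left(\mathbb{Z}^{2}\right)=C\left(\Phi_{2}\right)$, i.e. $\mathscr{C}\left(1\right)=\mathscr{C}\left(2\right)$.

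Next I would recast $\mathcal{N}\subseteq Inn(\hat{\Phi}_{2})$ as a cohomology vanishing. Given $\phi\in\mathcal{N}$, write $\phi(g)=g\cdot c(g)$ with $c(g)\in\overline{\Phi_{2}'}$; as $\overline{\Phi_{2}'}$ is abelian and $\phi$ is trivial on it, $c$ is a continuous $1$-cocycle which factors through $\hat{\mathbb{Z}}^{2}$, i.e. $c\in Z^{1}(\hat{\mathbb{Z}}^{2},\overline{\Phi_{2}'})$, and conversely every such cocycle comes from a unique element of $\mathcal{N}$. The inner automorphisms $Inn(w)$ with $w\in\overline{\Phi_{2}'}$ lie in $\mathcal{N}$ and realize exactly the coboundaries $B^{1}(\hat{\mathbb{Z}}^{2},\overline{\Phi_{2}'})$. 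Hence $\mathcal{N}\subseteq Inn(\hat{\Phi}_{2})$ follows once $H^{1}(\hat{\mathbb{Z}}^{2},\overline{\Phi_{2}'})=0$.

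The heart of the matter is to identify $\overline{\Phi_{2}'}$ as a module over the completed group ring $\Lambda:=\hat{\mathbb{Z}}[[\hat{\mathbb{Z}}^{2}]]$ and to compute its first cohomology. Using the Magnus embedding of $\hat{\Phi}_{2}$ described in $\varoint$\ref{sec:CSP for 3}, together with Fox's identity (\ref{eq: magnus}), one identifies $\overline{\Phi_{2}'}=\ker(\hat{\Phi}_{2}\to\hat{\mathbb{Z}}^{2})$ with the syzygy module $\left\{ (a,b)\in\Lambda^{2}\,:\,(x-1)a+(y-1)b=0\right\} $, the class of $[y,x]$ corresponding (up to a unit) to $(y-1,-(x-1))$. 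Since $\Lambda=\prod_{p}\mathbb{Z}_{p}[[\mathbb{Z}_{p}^{2}]]\cong\prod_{p}\mathbb{Z}_{p}[[T_{1},T_{2}]]$ and $\left\{ x-1,y-1\right\} $ is a regular sequence in each factor, the Koszul complex $0\to\Lambda\to\Lambda^{2}\to\Lambda$ is exact in the middle, so $\overline{\Phi_{2}'}\cong\Lambda$ \emph{as a $\Lambda$-module}, i.e. it is the regular (rank-one free) module, generated by $[y,x]$. Now $H^{1}(\hat{\mathbb{Z}}^{2},\Lambda)=\prod_{p}H^{1}(\mathbb{Z}_{p}^{2},\mathbb{Z}_{p}[[\mathbb{Z}_{p}^{2}]])$, and each factor vanishes: applying $\mathrm{Hom}_{\mathbb{Z}_{p}[[\mathbb{Z}_{p}^{2}]]}(-,\mathbb{Z}_{p}[[\mathbb{Z}_{p}^{2}]])$ to the length-$2$ Koszul resolution of the trivial module returns the Koszul complex (up to signs), which is exact except in top degree --- equivalently, $\mathbb{Z}_{p}^{2}$ is a profinite Poincar\'e duality group of dimension $2$, so $H^{i}(\mathbb{Z}_{p}^{2},\mathbb{Z}_{p}[[\mathbb{Z}_{p}^{2}]])=0$ for $i\neq2$. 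Thus $H^{1}(\hat{\mathbb{Z}}^{2},\overline{\Phi_{2}'})=0$, so $\mathcal{N}\subseteq Inn(\hat{\Phi}_{2})$ and $C\left(\mathbb{Z}^{2}\right)=C\left(\Phi_{2}\right)$.

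I expect the main obstacle to be the module identification in the third step: one must carefully control the passage from the discrete module $\Phi_{2}'$ to its closure $\overline{\Phi_{2}'}$ inside $\hat{\Phi}_{2}$ (as opposed to an abstract completion), and justify exactness of the completed Koszul complex over the non-local ring $\Lambda$, which is cleanest after decomposing into the $p$-adic factors. It is essential here that $\overline{\Phi_{2}'}$ turns out to be the \emph{regular} module and not, say, the augmentation ideal (for which $H^{1}$ would be nonzero); this is exactly what makes the result surprising and is special to rank $2$, since the analogous syzygy module for $n\geq3$ generators is not free. Setting up continuous cohomology and the cocycle dictionary in the second step requires some care but is routine once the module is pinned down. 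A more hands-on alternative, presumably closer to \cite{key-6}, would avoid cohomology altogether: a cocycle $c$ is determined by $c(x),c(y)\in\overline{\Phi_{2}'}$ subject to the single relation forced by $[y,x]$ being fixed, and one checks directly, using $\overline{\Phi_{2}'}\cong\Lambda$, that this relation makes $c$ a coboundary.
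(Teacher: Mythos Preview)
The paper does not actually prove this theorem. It is stated in \S\ref{sec:Remarks} as a result imported from \cite{key-6}, introduced by ``in \cite{key-6} it was shown quite surprisingly, that:'', and no argument is given here. Indeed, the remark following Lemma~\ref{lem:.main-meta-2} says explicitly that the present paper only establishes that $C(\mathbb{Z}^{2})/C(\Phi_{2})$ is abelian, while the stronger equality $C(\mathbb{Z}^{2})=C(\Phi_{2})$ requires \cite{key-6}. So there is no ``paper's own proof'' to compare your proposal against.

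That said, your strategy is sound and is a clean cohomological repackaging of what \cite{key-6} does by hand: reduce to showing that every automorphism of $\hat{\Phi}_{2}$ trivial on both $\hat{\mathbb{Z}}^{2}$ and $\overline{\Phi_{2}'}$ is inner, recast this as $H^{1}(\hat{\mathbb{Z}}^{2},\overline{\Phi_{2}'})=0$, and then exploit that $\overline{\Phi_{2}'}$ is free of rank one over the completed group ring (the relation module of the one-relator presentation of $\mathbb{Z}^{2}$). You correctly identify this last point as the feature special to two generators.

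One genuine issue: the decomposition $\hat{\mathbb{Z}}[[\hat{\mathbb{Z}}^{2}]]\cong\prod_{p}\mathbb{Z}_{p}[[\mathbb{Z}_{p}^{2}]]$ is not correct as written. The completed group ring $\hat{\mathbb{Z}}[[\hat{\mathbb{Z}}^{2}]]$ has finite quotients such as $\mathbb{F}_{p}[(\mathbb{Z}/q\mathbb{Z})^{2}]$ for distinct primes $p\neq q$, which do not arise from any factor $\mathbb{Z}_{\ell}[[\mathbb{Z}_{\ell}^{2}]]$; so the map to the product is not an isomorphism, and the Poincar\'e-duality argument for $\mathbb{Z}_{p}^{2}$ does not directly apply. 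What you really need is the exactness of the Koszul complex on $(x-1,y-1)$ over the actual ring in play, and this is most safely verified through the inverse system $\hat{\Phi}_{2}=\varprojlim_{m}\Phi_{2,m}$ and the finite rings $\mathbb{Z}_{m}[\mathbb{Z}_{m}^{2}]$ used in \S\ref{sec:CSP for 3} --- essentially the ``hands-on alternative'' you sketch in your last paragraph, which is indeed closer to the route taken in \cite{key-6}. With that correction the argument goes through.
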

The proof in \cite{key-6} suggested to conjecture that $\mathfrak{\mathcal{\mathscr{C}}}\left(1\right)=\mathfrak{\mathcal{\mathscr{C}}}\left(2\right)=\mathfrak{\mathcal{\mathscr{C}}}\left(r\right)$
for every $r$. But, the explicit construction of a congruence subgroup
we gave in $\varoint$\ref{sec:SCP F2} gives a counter example:
\begin{prop}
$\mathfrak{\mathcal{\mathscr{C}}}\left(1\right)\subsetneqq\mathfrak{\mathcal{\mathscr{C}}}\left(r\right)$
for every $r\geq3$. Equivalently $C\left(\mathbb{Z}^{2}\right)\gneqq C\left(\Phi_{2,r}\right)$
for every $r\geq3$.\end{prop}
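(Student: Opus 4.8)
The plan is to deduce this from the explicit construction of a congruence subgroup in Theorem~\ref{thm:explicit}. First I would reduce to $r=3$: since $Out(\hat{\Phi}_{2,r})$ surjects onto $Out(\hat{\Phi}_{2,3})$ for $r\ge 3$, one has $C\left(\Phi_{2,r}\right)\le C\left(\Phi_{2,3}\right)$, i.e. $\mathscr{C}\left(3\right)\subseteq\mathscr{C}\left(r\right)$; combined with $\mathscr{C}\left(1\right)\subseteq\mathscr{C}\left(3\right)$ from the chain above, it suffices to prove $\mathscr{C}\left(1\right)\subsetneqq\mathscr{C}\left(3\right)$. Recalling $Out\left(\Phi_{2,3}\right)=GL_{2}\left(\mathbb{Z}\right)$, the target is then a finite index subgroup $K\le GL_{2}\left(\mathbb{Z}\right)$ which is \emph{not} a classical congruence subgroup but which contains $\ker\left(Out\left(\Phi_{2,3}\right)\to Out\left(\Phi_{2,3}/\Lambda\right)\right)$ for some characteristic finite index $\Lambda\vartriangleleft\Phi_{2,3}$.

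For the candidate $K$ I would fix a prime $q\ge 5$ and take $K=\ker\left(\left\langle \alpha,\beta\right\rangle \to\left(\mathbb{Z}/q\mathbb{Z}\right)^{2}\right)$ (reduction mod $q$ of the abelianization), viewed inside $GL_{2}\left(\mathbb{Z}\right)$ via $\left\langle \alpha,\beta\right\rangle \cong\left\langle \left(\begin{smallmatrix}1 & 2\\0 & 1\end{smallmatrix}\right),\left(\begin{smallmatrix}1 & 0\\2 & 1\end{smallmatrix}\right)\right\rangle$. Two facts must be checked about $K$. (i) $K$ is \emph{not} a congruence subgroup: every congruence quotient of $\left\langle \alpha,\beta\right\rangle$ is (the image of $\left\langle \alpha,\beta\right\rangle$ in some $SL_{2}\left(\mathbb{Z}/m\mathbb{Z}\right)$, hence) an extension of some $SL_{2}\left(\mathbb{Z}/N\mathbb{Z}\right)$ with $N$ odd by a $2$-group; as $SL_{2}\left(\mathbb{Z}/p^{k}\mathbb{Z}\right)$ is perfect for $p\ge 5$, all such groups have abelianization a $\{2,3\}$-group and so do not surject onto $\left(\mathbb{Z}/q\mathbb{Z}\right)^{2}$. (ii) $K\supseteq\left\langle \alpha,\beta\right\rangle '$ (as $\left(\mathbb{Z}/q\mathbb{Z}\right)^{2}$ is abelian), hence $K\supseteq\pi\left(F_{2}''\right)=\bigl[\pi\left(F_{2}'\right),\pi\left(F_{2}'\right)\bigr]\subseteq\left\langle \alpha,\beta\right\rangle '$, where $\pi:\Delta\to\left\langle \alpha,\beta\right\rangle$ is the homomorphism from the proof of Lemma~\ref{lem:main-lemma} (recall $F_{2}'\le\Delta$).

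Next I would run the construction in the proof of Theorem~\ref{thm:explicit} with $H=Inn\left(F_{2}\right)\rtimes K\le Aut'\left(F_{2}\right)$ and any prime $p\nmid 6q^{2}$; that proof applies verbatim to any finite index $H$ with $Inn\left(F_{2}\right)\le H\le Aut'\left(F_{2}\right)$, normality never being used. It yields $M=F_{2}'F_{2}^{4}\cap N'N^{p}$ with $N=F_{2}'F_{2}^{6}\cap\bigcap_{i}t_{i}^{-1}\pi^{-1}\left(K\right)t_{i}$, and $G\left(M\right)\le H$. The key observation is that, because $F_{2}''\vartriangleleft F_{2}$ and $\pi\left(F_{2}''\right)\subseteq K$, we get $F_{2}''\subseteq\pi^{-1}\left(K\right)$, hence $F_{2}''\subseteq N$, hence $F_{2}'''=\bigl[F_{2}'',F_{2}''\bigr]\subseteq N'\subseteq M$; together with $F_{2}'''\subseteq F_{2}'F_{2}^{4}$ this gives $F_{2}^{(3)}=F_{2}'''\subseteq M$. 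Let $\hat{\Lambda}$ be the characteristic core of $M$ in $F_{2}$, so $F_{2}^{(3)}\subseteq\hat{\Lambda}\subseteq M$ and $\Lambda:=\hat{\Lambda}/F_{2}^{(3)}$ is characteristic of finite index in $\Phi_{2,3}$. A short argument finishes: if $\varphi\in Aut\left(F_{2}\right)$ represents a class in $\ker\left(Out\left(F_{2}\right)\to Out\left(F_{2}/\hat{\Lambda}\right)\right)$, then after correcting $\varphi$ by an inner automorphism we may assume $\varphi$ acts trivially on $F_{2}/\hat{\Lambda}$; as $\hat{\Lambda}\subseteq M$ this forces $\varphi\in G\left(M\right)\le H$, so the $Out$-class of $\varphi$ lies in the image of $H$ in $Out\left(F_{2}\right)$, which is $K$. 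Under the identifications $Out\left(F_{2}\right)=Out\left(\Phi_{2,3}\right)=GL_{2}\left(\mathbb{Z}\right)$ this says $K\supseteq\ker\left(Out\left(\Phi_{2,3}\right)\to Out\left(\Phi_{2,3}/\Lambda\right)\right)$, so $K$ is open for $\mathscr{C}\left(3\right)$ while being non-congruence, i.e. not open for $\mathscr{C}\left(1\right)$. Hence $\mathscr{C}\left(1\right)\subsetneqq\mathscr{C}\left(3\right)\subseteq\mathscr{C}\left(r\right)$, equivalently $C\left(\mathbb{Z}^{2}\right)=C\left(\Phi_{2,1}\right)\gneqq C\left(\Phi_{2,r}\right)$, for all $r\ge 3$.

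The crux — and where the content really sits — is the choice of $K$: it must be non-congruence and yet contain $\pi\left(F_{2}''\right)$, which forces it to lie above $\left\langle \alpha,\beta\right\rangle '$; the only substantive input is the arithmetic fact that $SL_{2}\left(\mathbb{Z}\right)$ fails to detect, in its congruence completion, an elementary abelian quotient of exponent $q\ge 5$, so that the above $K$ is genuinely non-congruence. Everything else — the divisibility conditions needed to invoke Theorem~\ref{thm:explicit}, and the bookkeeping with $Aut$, $Out$ and characteristic cores — is routine.
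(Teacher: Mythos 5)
Your argument is essentially the paper's proof: the same subgroup $K=\left\langle \alpha,\beta\right\rangle '\left\langle \alpha,\beta\right\rangle ^{q}$ (the paper's $G'G^{p}$), the same application of Theorem \ref{thm:explicit} with $H=Inn\left(F_{2}\right)\rtimes K$, the same key observation that $K\supseteq\left\langle \alpha,\beta\right\rangle '$ forces $F_{2}'''\subseteq M$ so that $F_{2}/M$ has derived length at most $3$ (the paper leaves this verification implicit), and the same passage to a characteristic subgroup and reduction to general $r\geq3$ via the monotone chain. The one genuine difference is that where the paper cites Reiner's theorem for the non-congruence of $G'G^{p}$, you prove it directly: classical congruence quotients of the Sanov group have a normal $2$-subgroup with quotient $SL_{2}\left(\mathbb{Z}/N\mathbb{Z}\right)$, $N$ odd, hence have $\left\{ 2,3\right\} $-group abelianizations and cannot surject onto $\left(\mathbb{Z}/q\mathbb{Z}\right)^{2}$ for $q\geq5$; this is correct (it needs $q\geq5$ rather than an arbitrary odd prime, which is more than enough here) and makes the proof self-contained. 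Two small points of wording rather than gaps: normality is in fact used in the proof of Theorem \ref{thm:explicit} --- one needs $K$ normal in $\left\langle \alpha,\beta\right\rangle $ to replace $\bigcap_{g\in F_{2}}g^{-1}\pi^{-1}\left(K\right)g$ by the finite intersection over the $t_{i}$ and to apply Proposition \ref{prop:BER-dis-ver} to $\left\langle \alpha,\beta\right\rangle /K$ --- but your $K$ is characteristic in $\left\langle \alpha,\beta\right\rangle $, so your $H$ is normal in $Aut'\left(F_{2}\right)$ and the argument goes through verbatim; and the claim that $\hat{\Lambda}/F_{2}^{(3)}$ is characteristic in $\Phi_{2,3}$ rests on the identification $Out\left(\Phi_{2,3}\right)\cong GL_{2}\left(\mathbb{Z}\right)\cong Out\left(F_{2}\right)$ (every automorphism of $\Phi_{2,3}$ is, up to inner, induced from $Aut\left(F_{2}\right)$), which you invoke in your final sentence anyway and which the paper's footnote glosses at the same level of detail.
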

\begin{proof}
Denote $G=\left\langle \left(\begin{array}{cc}
1 & 2\\
0 & 1
\end{array}\right),\left(\begin{array}{cc}
1 & 0\\
2 & 1
\end{array}\right)\right\rangle \leq GL_{2}\left(\mathbb{Z}\right)$. Then by a theorem of Reiner \cite{key-14}, for every $p\neq2$,
$G'G^{p}$ is not a congruence subgroup of $GL_{2}\left(\mathbb{Z}\right)$
in the classical manner, i.e. $G'G^{p}\notin\mathfrak{\mathcal{\mathscr{C}}}\left(1\right)$.
On the other hand, applying the explicit construction given in Theorem
\ref{thm:explicit}, we obtain a finite index normal subgroup $M\vartriangleleft F_{2}$
such that $F_{2}/M$ is of solvability length $3$ such that\footnote{We remark that if one wants $M$ to be characteristic, all we need
to do, is to replace $M$ by $\bigcap_{\sigma\in Aut\left(F_{2}\right)}\sigma\left(M\right)$,
and this procedure does not change the solvability length of $F_{2}/M$.}: 
\[
\ker\left(Out\left(F_{2}\right)=GL_{2}\left(\mathbb{Z}\right)\to Out\left(F_{2}/M\right)\right)\leq G'G^{p}.
\]
This shows that $G'G^{p}$ is a congruence subgroup of $GL_{2}\left(\mathbb{Z}\right)$
with respect to the congruence topology induced by $Out(\hat{\Phi}_{2,3})$.
Equivalently, $\mathfrak{\mathcal{\mathscr{C}}}\left(1\right)\subsetneqq\mathfrak{\mathcal{\mathscr{C}}}\left(3\right)$
or $C\left(\mathbb{Z}^{2}\right)\gneqq C\left(\Phi_{2,3}\right)$,
as required.
\end{proof}
The proposition suggests the following conjecture:
\begin{conjecture}
$C\left(\Phi_{2,r}\right)\gneqq C\left(\Phi_{2,r+1}\right)$ for every
$r\geq2$, or equivalently $\mathfrak{\mathcal{\mathscr{C}}}\left(r\right)\subsetneqq\mathfrak{\mathcal{\mathscr{C}}}\left(r+1\right)$.
In particular, $C\left(\Phi_{2,r}\right)\neq\left\{ e\right\} =C\left(F_{2}\right)$
and $\mathfrak{\mathcal{\mathscr{C}}}\left(r\right)\neq\mathscr{F}$
for every $r$.
\end{conjecture}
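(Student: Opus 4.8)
The plan is to recast the conjecture as the existence, for every $r\geq 2$, of a finite index subgroup $U_r\leq GL_2(\mathbb{Z})$ which is a congruence subgroup with respect to $\Phi_{2,r+1}$ but not with respect to $\Phi_{2,r}$ --- equivalently, an open subgroup of $\widehat{\langle\alpha,\beta\rangle}$ containing $C(\Phi_{2,r+1})$ but not $C(\Phi_{2,r})$. Granting this for all $r\geq 2$, the ``in particular'' clause is automatic: a chain which strictly decreases at every step from $C(\Phi_{2,2})$ onward can never reach $\{e\}$, and $\mathscr{C}(r)=\mathscr{F}$ is equivalent to $C(\Phi_{2,r})=\{e\}$. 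The natural candidate for $U_r$ is an iterated ``derived-and-power'' subgroup of a principal congruence subgroup $\Gamma(N)$ contained in the free group $G=\langle\left(\begin{smallmatrix}1&2\\0&1\end{smallmatrix}\right),\left(\begin{smallmatrix}1&0\\2&1\end{smallmatrix}\right)\rangle$ (e.g. $N=4$), of depth growing linearly in $r$; being characteristic in $\Gamma(N)$, such a $U_r$ is normal in $GL_2(\mathbb{Z})$, as Theorem \ref{thm:explicit} requires.

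The upper bound $U_r\in\mathscr{C}(r+1)$ should follow from Theorem \ref{thm:explicit} by tracking derived lengths through its proof. Fed a subgroup $U\leq G$ with $G/U$ solvable of derived length $d$, the construction outputs $M=F_2'F_2^4\cap N'N^p$ with $N=F_2'F_2^6\cap\bigcap_i t_i^{-1}\pi^{-1}(U)t_i$; each nesting of an abelian-type quotient raises the derived length by one and the final step $N\mapsto N'N^p$ by one more, so $F_2/M$ is solvable of derived length at most $d+2$ and $G(M)$ realizes $U$ as a $\Phi_{2,d+2}$-congruence subgroup. An iterated derived-and-power subgroup of depth $k$ has $G/U$ of derived length $k+O(1)$, so choosing $k\approx r-1$ places $U_r$ in $\mathscr{C}(r+1)$; replacing $M$ by $\bigcap_{\sigma\in Aut(F_2)}\sigma(M)$ to render it characteristic does not change the solvability length, as in the footnote after the previous Proposition.

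The lower bound $U_r\notin\mathscr{C}(r)$ is the heart of the matter --- a ``solvable Reiner theorem''. The base case $r=2$ is classical: by Reiner \cite{key-14} together with $\mathscr{C}(1)=\mathscr{C}(2)$, the subgroup $\Gamma(N)'\Gamma(N)^p$ with $p$ odd is not a congruence subgroup in the classical sense, hence not in $\mathscr{C}(2)$. For $r\geq 3$ the natural strategy is induction on $r$: if $\ker(GL_2(\mathbb{Z})=Out(\Phi_{2,r})\to Out(\Phi_{2,r}/\bar K))\leq U_r$ for some characteristic finite index $\bar K\leq\Phi_{2,r}$, then, exploiting the extension $1\to A\to\hat{\Phi}_{2,r}\to\hat{\Phi}_{2,r-1}\to 1$ with $A=\overline{\Phi_{2,r}^{(r-1)}}$ abelian, one would descend to a $\Phi_{2,r-1}$-congruence realization of the ``square root'' $U_{r-1}$ (where $U_r=U_{r-1}'U_{r-1}^{p}$), contradicting $U_{r-1}\notin\mathscr{C}(r-1)$. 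The descent step ought to be the mirror of the $N\mapsto N'N^p$ construction: a cohomological argument that an automorphism of $\Phi_{2,r}$ trivial on $\Phi_{2,r-1}$ and on a suitable congruence quotient can, through the single abelian layer $A$, realize at most one derived-and-power step and never ``undo'' one.

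The main obstacle is precisely this descent, and it is genuinely subtle because its analogue at the very first layer fails: $\mathscr{C}(1)=\mathscr{C}(2)$, i.e. passing from $\mathbb{Z}^2$ to $\Phi_2$ creates no new congruence topology at all. The reason, extracted from \cite{key-6}, is that the abelian layer $\overline{\Phi_2'}$ is cyclic as a $\mathbb{Z}[\hat{\mathbb{Z}}^2]$-module, generated by $[y,x]$, which $\alpha$ and $\beta$ fix --- forcing $C(\mathbb{Z}^2)$ to act trivially on it. For $r\geq 2$ the layer $\overline{\Phi_{2,r}^{(r-1)}}=\overline{F_2^{(r-1)}/F_2^{(r)}}$, viewed as a module over the profinite group ring of $\hat{\Phi}_{2,r-1}$ and described via iterated Magnus embeddings and Fox's free differential calculus, is no longer generated by $\alpha,\beta$-fixed elements, which is exactly what leaves room for a genuine congruence obstruction. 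Making this module structure explicit enough to simultaneously rule out $U_r\in\mathscr{C}(r)$ and to confirm that the single extra layer in $\Phi_{2,r+1}$ already suffices for $U_r\in\mathscr{C}(r+1)$ is where the real work lies; one expects the arithmetic input --- the analogue of the Mennicke-symbol computations underlying Reiner's theorem --- to grow progressively heavier with $r$.
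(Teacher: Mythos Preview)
The statement you are attempting is stated in the paper as a \emph{conjecture}, not as a theorem; the paper offers no proof. In fact, immediately after stating it the authors write: ``we do not even know to decide whether $C(\Phi_{2,r})\neq\{e\}$ for $r\geq 3$'', so even the weaker ``in particular'' clause is explicitly left open. There is therefore no paper proof to compare your proposal against.

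Your proposal is honest about this: you correctly identify the descent step (``a solvable Reiner theorem'') as the heart of the matter and explicitly say that ``making this module structure explicit enough \ldots\ is where the real work lies''. So what you have written is a plausible plan of attack, not a proof, and you acknowledge as much. A few comments on the plan itself. Your upper bound reasoning via Theorem~\ref{thm:explicit} is essentially what the paper uses to prove the weaker Proposition that $\mathscr{C}(1)\subsetneqq\mathscr{C}(r)$ for $r\geq 3$: feeding in $G'G^p$ (with $G/U$ abelian, so $d=1$) yields $M$ with $F_2/M$ of derived length~$3$, which matches your $d+2$ count. Your inductive descent strategy for the lower bound is natural, but note that the paper's own methods give no traction here: the only negative results in the paper (Theorems~\ref{thm:.metabelian 2} and~\ref{thm: metabelian 3}) are proved by showing that some finite simple group is \emph{not} involved in $Aut(\hat{\Phi})$, and the paper's Proposition in \S\ref{sec:Remarks} shows precisely that this method fails for $\Phi_{2,r}$ with $r\geq 3$, since \emph{every} finite group is involved in $Aut(\hat{\Phi}_{2,r})$ once $r\geq 3$. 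So your proposed cohomological/module-theoretic descent would have to be genuinely new, and the obstacle you flag --- that the analogous descent provably collapses at the first step because $\mathscr{C}(1)=\mathscr{C}(2)$ --- is exactly the reason the authors leave this as a conjecture.
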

We should remark that we do not even know to decide whether $C\left(\Phi_{2,r}\right)\neq\left\{ e\right\} $
for $r\geq3$, i.e. we do not know if the congruence subgroup property
holds for $\Phi_{2,r}$ for $r\geq3$ or not. Note that our proofs
of Theorems \ref{thm:.metabelian 2} and \ref{thm: metabelian 3}
claiming that $\Phi=\Phi_{2}=\Phi_{2,2}$ and $\Phi=\Phi_{3}$ do
not satisfy the CSP were based on two facts:
\begin{enumerate}
\item $Aut\left(\Phi\right)$ is large, and hence every finite group is
involved in $\widehat{Aut\left(\Phi\right)}$, and
\item not every finite group is involved in $Aut(\hat{\Phi})$.
\end{enumerate}
Now, for $\Phi=\Phi_{d,r}$, the free solvable group on $d\geq2$
generators and solvability length $r$, part 2 is valid for $1\leq r\leq2$
and every $d$ (with the same proof as for $d=3$ in $\varoint$\ref{sec:CSP for 3}).
But, as $C\left(\Phi_{d,1}\right)=\left\{ e\right\} $ for every $d\geq3$,
and $C\left(\Phi_{d,2}\right)$ is abelian for every $d\geq4$ (cf.
\cite{key-7}), part 1 is not valid in these cases. On the other hand,
for $\Phi=\Phi_{2,r}$ or $\Phi=\Phi_{3,r}$, part 1 is still true
for every $r\geq2$ but not part 2. Infact, we have:
\begin{prop}
Let $\Phi_{d,r}$ be the free solvable group on $d\geq2$ generators
and solvability length $r$. Then if $r\geq3$, then every finite
group $H$ is involved in $Aut(\hat{\Phi}_{d,r})$.\end{prop}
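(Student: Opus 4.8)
The plan is to show that $\operatorname{Aut}(\hat{\Phi}_{d,r})$ involves every finite group by finding, for each finite group $H$, a finite characteristic quotient of $\Phi_{d,r}$ whose automorphism group involves $H$. The key point that becomes available once $r\geq 3$ is that $\Phi_{d,r}$ surjects onto $\Phi_{d,3}$, and inside the latter one has a large supply of elementary-abelian characteristic sections on which $\operatorname{GL}$ of large rank acts. Concretely, I would first treat the case $r=3$ (the general case follows since $\operatorname{Aut}(\hat{\Phi}_{d,r})$ maps onto $\operatorname{Aut}(\hat{\Phi}_{d,3})$ through the characteristic quotient, and "involved in" is inherited along such maps).

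First I would set up the Magnus-type machinery already recalled in Section~\ref{sec:CSP for 3}: for a prime $p$ and $m=p$, one has the finite quotient $\Phi_{d,p}=F_d/K_{d,p}^{p}K'_{d,p}$ with $K_{d,p}=F_d^{p}F'_d$, and the kernel $KA(\Phi_{d,p})=\ker(\operatorname{Aut}(\Phi_{d,p})\to\operatorname{Aut}(\mathbb{Z}_p^d))$ embeds via $J_p$ into $\operatorname{GL}_d(\mathbb{Z}_p[\mathbb{Z}_p^d])$. The relevant feature is that this embedding is essentially onto the relevant congruence-type subgroup: the "transvection-like" automorphisms $x_i\mapsto x_i w$ with $w\in K_{d,p}$ realize, after passing to $\bar{S}=S/S'$, the full action of $\mathbb{Z}_p[\mathbb{Z}_p^d]$-module automorphisms of a free module. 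Since $\mathbb{Z}_p[\mathbb{Z}_p^d]$ is a local ring with residue field $\mathbb{F}_p$, reducing modulo its maximal ideal gives a surjection $KA(\Phi_{d,p})\twoheadrightarrow\operatorname{GL}_d(\mathbb{F}_p)$, but more to the point we get a section isomorphic to a free $\mathbb{F}_p$-module of large rank on which $\operatorname{GL}_d(\mathbb{F}_p)$ acts together with all these transvections — giving subquotients of the form $\mathbb{F}_p^N\rtimes\operatorname{GL}_d(\mathbb{F}_p)$ with $N$ as large as we like (by taking $p$ large, since $\dim_{\mathbb{F}_p}\mathbb{F}_p[\mathbb{Z}_p^d]=p^d$). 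Every finite group $H$ of order $n$ embeds in $\operatorname{GL}_{n}(\mathbb{F}_p)$ for a suitable prime $p$ (regular representation), hence is involved in $\operatorname{Aut}(\Phi_{d,p})$ once $p^{d}\geq n$ and $d\geq $ the needed rank — but here $d$ is fixed, so instead I would use the regular representation of $H$ over $\mathbb{F}_p$ together with the fact that the $\mathbb{Z}_p[\mathbb{Z}_p^d]$-module structure supplies enough "room": the module $\bigoplus_{i=1}^d \mathbb{F}_p[\mathbb{Z}_p^d]$ has $\mathbb{F}_p$-dimension $d\cdot p^d\to\infty$, and $H\hookrightarrow \operatorname{GL}_{d\cdot p^d}(\mathbb{F}_p)$ for $p$ large, with this copy of $\operatorname{GL}$ being realized inside $\operatorname{Aut}$ of the elementary-abelian section once $r\geq 3$ (the third derived layer is what allows the section $\bar S$ itself to be an honest elementary-abelian characteristic subquotient of $\Phi_{d,r}$ rather than being killed).

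The one subtlety — and the step I expect to be the main obstacle — is verifying that the relevant copy of $\operatorname{GL}_N(\mathbb{F}_p)$ (or at least a subgroup of it involving $H$) actually sits inside $\operatorname{Aut}(\Phi_{d,r})$ and not merely inside the endomorphism monoid or inside $\operatorname{Aut}$ of some non-characteristic section. This requires: (i) identifying a genuinely characteristic finite-index subgroup $K\leq\Phi_{d,r}$ with $\Phi_{d,r}/K$ finite such that the Magnus layer survives as a characteristic section of $\Phi_{d,r}/K$ — here $r\geq 3$ is used because the derived series of $\Phi_{d,r}$ is long enough that $S/S'$ with $S=R/\Phi_{d,r}^{(r)}$ is abelian and characteristic without the whole construction collapsing; and (ii) checking that the module automorphisms I want are induced by honest automorphisms (not just endomorphisms) of $\Phi_{d,r}/K$ — this is where one invokes that for the relevant nilpotent-by-abelian quotients the "IA-type" transvections $x_i\mapsto x_i w$, $w$ in a high term of the lower central / derived series, are invertible, together with the section identity $J(\alpha\circ\beta)=J(\alpha)\cdot\alpha(J(\beta))$ which shows the image is closed under composition and inversion. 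Once these points are in place, one concludes: $H$ is involved in $\operatorname{Aut}(\Phi_{d,r}/K)$, which is a finite quotient of $\operatorname{Aut}(\hat{\Phi}_{d,r})$, hence $H$ is involved in $\operatorname{Aut}(\hat{\Phi}_{d,r})$, as claimed.
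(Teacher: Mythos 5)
You have correctly guessed the high-level strategy (find a finite quotient $\Gamma$ of $\Phi_{d,r}$ with $H$ involved in $Aut\left(\Gamma\right)$), but the central step is missing and, as proposed, cannot work. An automorphism of a finite quotient which stabilizes the relevant normal abelian subgroup must be compatible with the conjugation action of the top quotient; on the Magnus section it therefore acts as a \emph{module} automorphism, and this is exactly why $KA\left(\Phi_{d,m}\right)$ embeds in $GL_{d}\left(\mathbb{Z}_{m}\left[\mathbb{Z}_{m}^{d}\right]\right)$ with the rank $d$ fixed -- not in $GL_{d\cdot p^{d}}\left(\mathbb{F}_{p}\right)$. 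A general $\mathbb{F}_{p}$-linear map of the elementary abelian section does not respect the $\mathbb{Z}_{p}\left[\mathbb{Z}_{p}^{d}\right]$-structure and does not extend to a group automorphism. Consequently the only non-abelian simple groups reachable through your mechanism are those involved in $GL_{d}\left(p^{e}\right)$: the proposition of Section \ref{sec:CSP for 3} (stated there for $d=3$) is precisely a proof of this limitation, so your construction cannot produce an arbitrary $H$, and the phrase ``$r\geq3$ supplies enough room'' is never turned into an actual construction -- you stay inside the metabelian Magnus picture, where $r\geq3$ plays no role. The paper's proof is genuinely different at this point: after reducing, via Cayley's theorem and $Sym\left(n-1\right)\leq SL_{n}\left(p\right)$, to involving $SL_{n}\left(p\right)$, it invokes Guralnick's lemma, which builds a finite $2$-generated group $\Gamma=W\rtimes\Delta$ of derived length three with $\Delta=AGL_{1}\left(r\right)$ and $W=V^{\oplus\left(r-2\right)}$ an \emph{isotypic} module; the big group $SL_{r-2}\left(p\right)$ acts on the multiplicity space, commuting with $\Delta$, so it normalizes $\Gamma$ inside $W\rtimes\left(\Delta\times SL_{r-2}\left(p\right)\right)$ and is involved in $Aut\left(\Gamma\right)$. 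The real work there is showing such a $\Gamma$ is generated by two elements (the Vandermonde argument), none of which has an analogue in your sketch.

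There is a second, independent gap at the end: you assert that $Aut\left(\Phi_{d,r}/K\right)$ is a finite quotient of $Aut(\hat{\Phi}_{d,r})$ once $K$ is characteristic. The natural map from the stabilizer of $K$ to $Aut\left(\Phi_{d,r}/K\right)$ exists, but its surjectivity is not automatic; it holds here only because $\hat{\Phi}_{d,r}$ is the free prosolvable group of derived length $r$, so that Gasch\"utz's Lemma (as in \cite{key-12}, 5.2) lets one lift automorphisms of any finite quotient -- this is the first step of the paper's proof and is what makes ``$H$ involved in $Aut\left(\Gamma\right)$'' transfer to $Aut(\hat{\Phi}_{d,r})$; your point (ii) about transvections being invertible does not address it. (A smaller inaccuracy: the claimed surjection $KA\left(\Phi_{d,p}\right)\twoheadrightarrow GL_{d}\left(\mathbb{F}_{p}\right)$ is doubtful, since these automorphisms act trivially on the mod-$p$ abelianization.)
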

\begin{proof}
By the same arguments of (\cite{key-12}, 5.2), it can be deduced
from Gaschutz's Lemma that for every surjective homomorphism $\pi:\hat{\Phi}_{d,r}\to\Gamma$
where $\Gamma$ is finite, the homomorphism
\[
Aut(\hat{\Phi}_{d,r})\geq\left\{ \sigma\in Aut(\hat{\Phi}_{d,r})\,|\,\sigma\left(\ker\pi\right)=\ker\pi\right\} \to Aut\left(\Gamma\right)
\]
is surjective. Thus, for proving our proposition it suffices to show
that $\hat{\Phi}_{d,r}$ has a finite quotient $\Gamma$ such that
$H$ is involved in $Aut\left(\Gamma\right)$. Now, by Cayley's Theorem,
$H$ is a subgroup of $Sym\left(n-1\right)$ for some $n$ and the
later is a subgroup of $SL_{n}\left(p\right)$ for every prime $p$.
Thus, the next lemma due to Robert Guralnick, finishes the proof of
the proposition.\end{proof}
\begin{lem}
For every $n\geq2$, there exists a prime $p$ and a finite group
$\Gamma$ generated by two elements and of solvability length three,
such that $SL_{n}\left(p\right)$ is involved in $Aut(\Gamma)$. \end{lem}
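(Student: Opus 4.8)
The plan is to realize $SL_{n}\left(p\right)$ --- in fact all of $GL_{n}\left(\mathbb{F}_{p}\right)$ --- inside $Aut\left(\Gamma\right)$ for $\Gamma$ a split extension $W\rtimes Q$, where $Q$ is a small $2$-generated metabelian group and $W=S^{\oplus n}$ for a suitable $\mathbb{F}_{p}[Q]$-module $S$. First I fix the source group: let $\ell$ be any prime and $Q=\mathbb{F}_{\ell^{n}}\rtimes C$, with $C=\left\langle \zeta\right\rangle$ cyclic of order $\ell^{n}-1$ acting on the additive group of $\mathbb{F}_{\ell^{n}}$ by multiplication by a generator $\zeta$ of $\mathbb{F}_{\ell^{n}}^{\times}$. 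Then $Q$ is generated by $\zeta$ and $1\in\mathbb{F}_{\ell^{n}}$ (the powers of $\zeta$ additively span $\mathbb{F}_{\ell^{n}}$), it is metabelian with $Q'=\mathbb{F}_{\ell^{n}}\neq1$, and $|Q|=\ell^{n}\left(\ell^{n}-1\right)$. Pick a prime $p\nmid|Q|$. Inducing a nontrivial character $\psi\colon Q'\to\overline{\mathbb{F}_{p}}^{\times}$ gives $S=Ind_{Q'}^{Q}\psi$, of dimension $\left[Q:Q'\right]=\ell^{n}-1\geq n$; it is irreducible because $C$ acts freely on the nontrivial characters of $Q'$, and its Brauer character --- which equals the regular character of $Q'$ minus the trivial one, extended by zero off $Q'$ --- is integer valued, so $S$ is realizable over $\mathbb{F}_{p}$ (Schur indices over finite fields being trivial) with $End_{\mathbb{F}_{p}[Q]}\left(S\right)=\mathbb{F}_{p}$; moreover $Q'$ acts nontrivially on $S$.

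Now put $W=S^{\oplus n}$ and $\Gamma=W\rtimes Q$. Since $Q$ is metabelian and $W$ abelian, $\Gamma'''=1$; since $S$ is a nontrivial simple module one gets $\Gamma'=W\rtimes Q'$ and then $\Gamma''=I_{Q'}W$, which is nonzero as $Q'$ acts nontrivially on $S$ --- so $\Gamma$ has solvability length exactly $3$. Because $S$ is absolutely irreducible, $Aut_{\mathbb{F}_{p}[Q]}\left(W\right)=GL_{n}\left(\mathbb{F}_{p}\right)$, and every $A$ in this group extends to $\phi_{A}\in Aut\left(\Gamma\right)$ via $\phi_{A}\left(w\cdot q\right)=A\left(w\right)\cdot q$ (the check that $\phi_{A}$ is an automorphism uses only the $Q$-linearity of $A$), with $A\mapsto\phi_{A}$ injective; hence $SL_{n}\left(p\right)\leq GL_{n}\left(\mathbb{F}_{p}\right)\hookrightarrow Aut\left(\Gamma\right)$, so $SL_{n}\left(p\right)$ is involved in $Aut\left(\Gamma\right)$. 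It then remains to see that $\Gamma$ is generated by two elements; granting this, a $2$-generated group of solvability length $3$ is a quotient of $\Phi_{2,3}$, hence of $\hat{\Phi}_{d,r}$ for all $d\geq2$, $r\geq3$, and the lemma (with the proposition above it) follows.

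For the $2$-generation I would argue through the relation module of $Q$. Fix $F_{2}\twoheadrightarrow Q$ with kernel $N$ and set $R=N^{ab}$, so $0\to R\to\mathbb{Z}[Q]^{2}\to I_{Q}\to0$ is exact (Fox--Crowell). Reducing mod $p$ keeps it exact, and as $p\nmid|Q|$ all terms are semisimple; comparing multiplicities of $S$ shows it occurs in $R\otimes\mathbb{F}_{p}$ with multiplicity $2\dim_{\mathbb{F}_{p}}S-\dim_{\mathbb{F}_{p}}S=\dim_{\mathbb{F}_{p}}S\geq n$, so there is a $\mathbb{Z}[Q]$-epimorphism $g\colon R\twoheadrightarrow S^{\oplus n}=W$. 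A priori $g$ need not extend over $\mathbb{Z}[Q]^{2}$, but the obstruction lies in $Ext^{1}_{\mathbb{Z}[Q]}\left(I_{Q},W\right)\cong H^{2}\left(Q,W\right)$, which vanishes since $p\nmid|Q|$ and $W$ is an $\mathbb{F}_{p}$-module; so there is $h\colon\mathbb{Z}[Q]^{2}\to W$ with $h|_{R}=g$. The crossed homomorphism $\theta\colon F_{2}\to W$ that $h$ defines by Fox derivatives satisfies $\theta\left(N\right)=W$, whence $w\mapsto\left(\theta\left(w\right),\bar{w}\right)$ is a surjection $F_{2}\twoheadrightarrow W\rtimes Q=\Gamma$.

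I expect the crux to be exactly this last step, and behind it the conflict between the three demands: a genuinely $2$-generated group cannot have large derived quotients, and a $2$-generated $p$-group has automorphism group far too small to involve $SL_{n}\left(p\right)$ for $n\geq3$. The way out is to take $S$ of large dimension over a fixed small metabelian $Q$, which makes $W=S^{\oplus n}$ almost cyclic, and above all to exploit the coprimality $p\nmid|Q|$: the vanishing $H^{2}\left(Q,W\right)=0$ is precisely what lets the $n$ surplus generators of $W$ be absorbed into two generators of $\Gamma$, while $\Gamma''=W\cap\Gamma''$ is still large enough to support the $GL_{n}\left(\mathbb{F}_{p}\right)$-action. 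A subsidiary technical point requiring care is the descent of $S$ from $\overline{\mathbb{F}_{p}}$ to $\mathbb{F}_{p}$, handled by the rationality of its Brauer character.
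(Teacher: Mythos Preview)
Your argument is correct and arrives at essentially the same group as the paper's proof (due to Guralnick): both take $\Gamma=W\rtimes Q$ with $Q$ an affine group $AGL_{1}(q)$ and $W$ a direct sum of copies of the unique nontrivial absolutely irreducible $\mathbb{F}_{p}[Q]$-module (the deleted permutation module on the affine line). The paper chooses $q=r$ prime with $r>n+1$ and $r\mid p-1$, forms $W=V^{\oplus(r-2)}$, and obtains $SL_{r-2}(p)$ (hence $SL_{n}(p)$) inside $Aut(\Gamma)$; you choose $q=\ell^{n}$, $p\nmid|Q|$, take $W=S^{\oplus n}$, and embed $GL_{n}(\mathbb{F}_{p})$ directly.

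The genuine difference is the $2$-generation step. Guralnick exhibits two explicit generators $D',S'$ of $\Gamma$ and verifies by hand, via a Vandermonde argument, that they generate each copy of $V$; this is concrete but somewhat delicate (and explains the choice of $r-2$ rather than $r-1$ copies). Your route through the relation module is more conceptual: the multiplicity count shows $S$ occurs $\dim S\geq n$ times in $R\otimes\mathbb{F}_{p}$, and the vanishing $Ext^{1}_{\mathbb{Z}[Q]}(I_{Q},W)\cong H^{2}(Q,W)=0$ (from $p\nmid|Q|$) lets the surjection $R\twoheadrightarrow W$ extend over $\mathbb{Z}[Q]^{2}$, producing the crossed homomorphism and hence $F_{2}\twoheadrightarrow\Gamma$. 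This makes transparent exactly why coprimality is the hinge, and it yields the slightly sharper statement that up to $\dim S$ copies of $S$ can be absorbed into two generators. Either approach suffices; yours trades an explicit computation for standard homological input.
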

\begin{proof}
Fix a prime $r$ such that $r>n+1$. Using Dirichlet's Theorem, pick
a prime $p$ such that $r$ divides $p-1$. Consider now the general
affine group 
\[
\Delta=AGL_{1}\left(r\right)=\left\{ \left(\begin{array}{cc}
a & 0\\
b & 1
\end{array}\right)\,|\,a\in\mathbb{F}_{r}^{*},\,b\in\mathbb{F}_{r}\right\} =\mathbb{F}_{r}\rtimes\mathbb{F}_{r}^{*}.
\]
Then $\Delta$ is of order $r\left(r-1\right)$. In addition, as $r|\left(p-1\right)$,
$\mathbb{F}_{p}$ contains the unit roots of order $r$, fix one of
them $\xi\neq1$, and consider the diagonal matrix 
\[
D=\left(\begin{array}{ccc}
\xi & \cdots & 0\\
\vdots & \ddots & \vdots\\
0 & \cdots & \xi^{r-1}
\end{array}\right)\in GL_{r-1}\left(p\right).
\]
Now, we can embed $\Delta$ in $GL_{r-1}\left(p\right)$ by sending
an element $b\in\left\{ 0,\ldots,r-1\right\} =\mathbb{F}_{r}$ to
the diagonal matrix $D^{b}$ (giving rise to a subgroup $N=\left\{ D^{b}\,|\,b\in\mathbb{F}_{r}\right\} $)
and an element $a\in\mathbb{F}_{r}^{*}$ to the permutation matrix
which normalizes $N$, sending $D^{b}$ to $D^{ba}$. So $\Delta$
has a module $V$ of dimension $r-1$ over $\mathbb{F}_{p}$. Now,
every $\Delta$-submodule of $V$ is also $N$-submodule. The $N$-submodules
are direct sums of different one dimensional $N$-modules, the eigen-spaces
of $D^{1}$, on which $\mathbb{F}_{r}^{*}$ acts transitively. We
deduce that $V$ is an irreducible module.

Denote now $W=\oplus_{i=1}^{r-2}V$ and using the obvious action of
$\Delta$ on $W$, define: $\Gamma=W\rtimes\Delta$. We claim that
$\Gamma$ is generated by two elements. By the description above,
it is clear why $\Delta$ is generated by two element, one of them
is $D\in\mathbb{F}_{r}$ and we denote the other one by $S\in\mathbb{F}_{r}^{*}$.
Let us now define 
\[
D'=\left((\vec{e}_{1},\ldots,\vec{e}_{r-2}),D\right),\,S'=\left((\vec{0},\ldots,\vec{0}),S\right)\in W\rtimes\Delta
\]
where $\left\{ \vec{e}_{1},\ldots,\vec{e}_{r-1}\right\} $ is the
standard basis of $V$. For a $1\leq j\leq r-1$ denote $\eta=\xi^{j}$.
Note, that for every $1\leq k\leq r-2$, $1+\eta+\ldots+\eta^{k}=\frac{1-\eta^{k+1}}{1-\eta}\neq0$.
It follows that $D'^{k}=\left((\alpha_{1}\vec{e}_{1},\ldots,\alpha_{r-2}\vec{e}_{r-2}),D^{k}\right)$
where $0\neq\alpha_{i}\in\mathbb{F}_{p}$ for every $1\leq k\leq r-2$.
Now, there is a power $S^{l}$ of $S$, $1\leq l\leq r-2$, which
sends $\vec{e}_{r-1}$ to $\vec{e}_{1}$. We have also $S^{l}DS^{-l}=D^{r-k}$
for some $1\leq k\leq r-2$. Thus, for some $0\neq\alpha_{i}\in\mathbb{F}_{p}$,
we can write:
\begin{eqnarray*}
w & = & S'^{l}D'S'^{-l}D'^{k}\\
 & = & ((\vec{0},\ldots,\vec{0}),S^{l})((\vec{e}_{1},\ldots,\vec{e}_{r-2}),D)((\vec{0},\ldots,\vec{0}),S^{-l})((\alpha_{1}\vec{e}_{1},\ldots,\alpha_{r-2}\vec{e}_{r-2}),D^{k})\\
 & = & ((S^{l}(\vec{e}_{1}),\ldots,S^{l}(\vec{e}_{r-2})),S^{l}DS^{-l})((\alpha_{1}\vec{e}_{1},\ldots,\alpha_{r-2}\vec{e}_{r-2}),D^{k})\\
 & = & (S^{l}(\vec{e}_{1})+D^{r-k}(\alpha_{1}\vec{e}_{1}),\ldots,S^{l}(\vec{e}_{r-2})+D^{r-k}(\alpha_{r-2}\vec{e}_{r-2}),I)\in W.
\end{eqnarray*}

Now, as $S^{l}$ sends $\vec{e}_{r-1}$ to $\vec{e}_{1}$, $\vec{e}_{1}$
does not appear in any entry of $w$ except the first one. 

Observe now, that the diagonals of $D^{0},\ldots,D^{r-2}$, considered
as column vectors of $V=\mathbb{F}_{p}^{r-1}$, form a basis for $V$
as the matrix:
\[
\left(\begin{array}{cccc}
1 & \xi & \cdots & \xi^{r-2}\\
1 & \xi^{2} & \cdots & \xi^{2\left(r-2\right)}\\
\vdots & \vdots &  & \vdots\\
1 & \xi^{r-1} & \cdots & \xi^{\left(r-1\right)\left(r-2\right)}
\end{array}\right)
\]
is a Vandermonde matrix, and therefore invertible. Thus, there is
a linear combination 
\[
C=\beta_{0}D^{0}+\ldots+\beta_{r-2}D^{r-2}=\left(\begin{array}{cccc}
1 & 0 & \cdots & 0\\
0 & 0\\
\vdots &  & \ddots & \vdots\\
0 & 0 & \cdots & 0
\end{array}\right),\,\,\,\beta_{i}\in\mathbb{F}_{p}.
\]
Now, observe that $D'$ acts on $W$ by conjugation via the action
of $D$ on $V$. Thus, we obtain an action of $C$ on $W$ via its
action on $V$, in which $C\left(w\right)$ has $\vec{0}$ in every
entry except the first one. This shows, as $V$ is irreducible, that
the first copy of $V$ in $W$ is inside the group generated by $D'$
and $S'$. In a similar way, all the $r-2$ copies of $V$ are generated
by $D'$ and $S'$, so $\Gamma$ is generated by two elements.

Now, $\Delta\times SL_{r-2}\left(p\right)$ acts on $W=\oplus_{i=1}^{r-2}V=V\otimes\mathbb{F}_{p}^{r-2}$
in a obvious way. Thus $\Gamma=W\rtimes\Delta$ is normal in $W\rtimes\left(\Delta\times SL_{r-2}\left(p\right)\right)$,
so $SL_{r-2}\left(p\right)$ is involved in $Aut\left(\Gamma\right)$.
\end{proof}
Let us remark that while we do not know the answer to the congruence
subgroup problem for free solvable groups on two generators and solvability
rank $r$ (unless $r=1$ or $2$), the situation with free nilpotent
groups on two generators is easier:
\begin{prop}
For every free nilpotent group on two generators $\Gamma$, the congruence
kernel contains a copy of $\hat{F}_{\omega}$ - the free profinite
group on countable number of generators.\end{prop}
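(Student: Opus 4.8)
The plan is to follow the same strategy used successfully for $\Phi_2$ and $\Phi_3$: exhibit largeness of $Aut(\Gamma)$ together with a limitation on which finite groups appear inside $Aut(\hat\Gamma)$. Write $\Gamma = \Gamma_{2,c}$ for the free nilpotent group of class $c$ on two generators $x,y$; the case $c=1$ is $\mathbb{Z}^2$ which is already known, so assume $c\geq 2$. First I would recall that $Aut(\Gamma_{2,c})$ surjects onto $Aut(\Gamma_{2,c}/\gamma_3) = Aut(\Gamma_{2,2})$, and more to the point, as in Section~\ref{sec:SCP F2}, the group $\left\langle \left(\begin{smallmatrix}1&2\\0&1\end{smallmatrix}\right),\left(\begin{smallmatrix}1&0\\2&1\end{smallmatrix}\right)\right\rangle \cong F_2$ lifts (via the abelianization map $Aut(\Gamma_{2,c})\to GL_2(\mathbb{Z})$) to a subgroup of finite index in $Aut(\Gamma_{2,c})$; since the target is free, this splitting shows $\widehat{Aut(\Gamma_{2,c})}$ contains a copy of $\hat F_2$, so \emph{every} finite group is involved in $\widehat{Aut(\Gamma_{2,c})}$.

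Next I would establish the crucial limitation: not every finite group is involved in $Aut(\hat\Gamma_{2,c})$. The key is that $\hat\Gamma_{2,c}$ is \emph{nilpotent} (pro-nilpotent of class $c$) and finitely generated, hence so is every finite quotient. Concretely, $\hat\Gamma_{2,c} = \varprojlim \Gamma_{2,c}/M$ over finite-index characteristic $M$, and $Aut(\hat\Gamma_{2,c}) = \varprojlim Aut(\Gamma_{2,c}/M)$, so a finite simple group $P$ involved in $Aut(\hat\Gamma_{2,c})$ is involved in $Aut(N)$ for some finite nilpotent $2$-generated group $N$ of class $\leq c$. Every such $N$ is a direct product of its Sylow subgroups $N_p$, each a $2$-generated finite $p$-group of class $\leq c$, so $Aut(N)$ embeds in $\prod_p Aut(N_p)$ and it suffices to bound the non-abelian simple groups involved in $Aut(N_p)$ for $N_p$ a $2$-generated $p$-group of class $\leq c$. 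I would bound the order of $N_p$: the lower central quotients $\gamma_i/\gamma_{i+1}$ of the free nilpotent group $\Gamma_{2,c}$ have ranks bounded by Witt's formula, so $\Gamma_{2,c}/\gamma_{c+1}\Gamma_{2,c}^{p^k}$-type quotients have $p$-rank bounded by a constant $d(c)$ depending only on $c$. But $N_p$ need not have bounded order — only bounded number of generators and class. Here I would instead use the Lubotzky--Mann / coclass-type fact, or more simply the observation used in Section~\ref{sec:CSP for 3}: run the argument through composition factors. A finite simple non-abelian $P$ involved in $Aut(N_p)$ must appear in $Aut(N_p)/(\text{the stabilizer of the Frattini series})$, and the latter quotient acts faithfully on $N_p/\Phi(N_p)\cong \mathbb{F}_p^{\leq 2}$, i.e.\ is involved in $GL_2(p)$, while the Frattini-series stabilizer is a $p$-group; so $P$ is involved in $GL_2(p)$ for some prime $p$. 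Then, exactly as in Corollary~\ref{prop:not involved}, Jordan's theorem (with Schur's extension to cross-characteristic) bounds the non-abelian simple groups involved in $GL_2(p)$ uniformly, so $Alt(n)$ for large $n$ is not involved in $Aut(\hat\Gamma_{2,c})$.

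Finally I would close the argument as in Section~\ref{sec:CSP for 3}: since $\widehat{Aut(\Gamma)}$ contains a copy of $\hat F_2$ in which every finite group is involved, but $Aut(\hat\Gamma)$ does not contain a copy of $\hat F_2$ (as there is a finite simple group not involved in it), the congruence kernel $C(\Gamma) = \ker(\widehat{Aut(\Gamma)}\to Aut(\hat\Gamma))$ meets this copy of $\hat F_2$ non-trivially; thus $C(\Gamma)$ contains a non-trivial closed normal subgroup of $\hat F_2$, and by Theorem~3.10 of \cite{key-8} it contains a copy of $\hat F_\omega$.

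The main obstacle will be the middle step: controlling which non-abelian finite simple groups are involved in $Aut(N)$ as $N$ ranges over all finite $2$-generated nilpotent groups of class $\leq c$. The potential trap is that these groups $N$ are \emph{not} of bounded order (only bounded rank and class), so one cannot just invoke finiteness of the relevant category. The fix is to observe that any automorphism group of a finite $p$-group is an extension of a subgroup of $GL_2(p)$ (the action on the Frattini quotient) by a $p$-group (the kernel of that action, which stabilizes a filtration), so the only non-abelian simple groups that can appear come from $GL_2(p)$ — and these are uniformly bounded by Jordan--Schur. Getting this reduction clean, and handling the $p=2,3$ small cases of $GL_2(p)$ (where $GL_2(p)$ is itself solvable or nearly so), is the delicate part; everything else is a transcription of the $\Phi_3$ argument.
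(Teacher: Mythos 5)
Your proposal is correct and follows the same two-ingredient strategy as the paper (largeness of $Aut\left(\Gamma\right)$, plus the existence of a finite group not involved in $Aut(\hat{\Gamma})$, then the $\hat{F}_{\omega}$-subgroup theorem of \cite{key-8}), but you prove the second ingredient by a genuinely different route. The paper disposes of it in one line by quoting \cite{key-12}, 5.3: for $\hat{\Gamma}$ pro-nilpotent the kernel of $Aut(\hat{\Gamma})\to Aut(\hat{\Gamma}/\overline{\Gamma'})$ is pro-nilpotent, so every non-abelian finite simple group involved in $Aut(\hat{\Gamma})$ is already involved in $Aut(\hat{\mathbb{Z}}^{2})=GL_{2}(\hat{\mathbb{Z}})$, and then the Jordan--Schur argument of Corollary \ref{prop:not involved} (with $GL_{2}$ in place of $SL_{3}$) applies. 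You instead argue at the finite level: reduce to $Aut\left(N\right)$ for $N$ a finite $2$-generated nilpotent quotient, split $N$ into its Sylow subgroups, and use Burnside's theorem that $\ker\left(Aut\left(N_{p}\right)\to Aut\left(N_{p}/\Phi\left(N_{p}\right)\right)\right)$ is a $p$-group, so all non-abelian simple subquotients come from $GL_{2}\left(p\right)$; this is in effect a self-contained proof of the very fact the paper cites, and the resulting uniform bound via Jordan--Schur is the same. Two small points: your worry about $p=2,3$ is unnecessary, since the two-prime trick in the proof of Corollary \ref{prop:not involved} makes the conclusion independent of the characteristic (and for $p\leq3$ the group $GL_{2}\left(p\right)$ is anyway solvable); and rather than asserting $Aut(\hat{\Gamma})=\underleftarrow{\lim}_{M}Aut\left(\Gamma/M\right)$ (forming that inverse system requires the kernels to be characteristic at each finite level, which is why the paper checks this carefully for $\Phi_{n,m}$), it suffices and is cleaner to say that the subgroups $\ker\left(Aut(\hat{\Gamma})\to Aut(\hat{\Gamma}/M)\right)$, for $M$ open characteristic, form a base of open subgroups of the profinite group $Aut(\hat{\Gamma})$, which already gives that every finite simple group involved in $Aut(\hat{\Gamma})$ is involved in some $Aut\left(N\right)$. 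Finally, your phrase ``lifts to a finite index subgroup'' should be read as in the paper's $\Phi_{3}$ argument: the preimage of the free congruence subgroup of $GL_{2}\left(\mathbb{Z}\right)$ is a finite index subgroup mapping onto $F_{2}$, and the splitting takes place at the profinite level because $\hat{F}_{2}$ is free; with that reading your largeness step coincides with the paper's.
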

\begin{proof}
It is known that if $\hat{\Gamma}$ is a pro-nilpotent group, then
the kernel of the map $Aut(\hat{\Gamma})\to Aut(\hat{\Gamma}/\overline{\Gamma'})$
is pro-nilpotent (cf. \cite{key-12}, 5.3). Thus, if $\Gamma$ is
a free nilpotent group (of arbitrary class) then by the same arguments
we brought before, there exists a finite group which is not involved
in $Aut(\hat{\Gamma})$. On the other hand, if $\Gamma$ is free nilpotent
group on two generators, then $Aut\left(\Gamma\right)$ is large,
as it can be mapped onto $GL_{2}\left(\mathbb{Z}\right)$ \footnote{In general, the kernel of the map $Aut\left(\Gamma\right)\to GL_{2}\left(\mathbb{Z}\right)$
strictly contains $Inn\left(\Gamma\right)$ (cf. \cite{key-14-1},
\cite{key-15})}. Thus, $\hat{F}_{2}$ is a subgroup of $\widehat{Aut\left(\Gamma\right)}$
and $C\left(\Gamma\right)\cap\hat{F}_{2}$ is non-trivial, hence contains
a copy of $\hat{F}_{\omega}$ (cf. \cite{key-8}).
\end{proof}
Our last remark is about the CSP for subgroups of automorphism groups.
Considering the classical congruence subgroup problem, one can take
$G$ to be a subgroup of $GL_{n}\left(R\right)$ where $R$ is a commutative
ring, and ask whether every finite index subgroup of $G$ contains
a  subgroup of the form $\ker\left(G\to GL_{n}\left(R/I\right)\right)$
for some finite index ideal $I\vartriangleleft R$. This direction
of generalization of the classical CSP has been studied intensively
during the second half of the $20^{\underline{th}}$ century (cf.
\cite{key-15-1}, \cite{key-16}). One can ask for a parallel generalization
for automorphism groups or outer atomorphism groups. I.e. let $G\leq Aut\left(\Gamma\right)$
(resp. $G\leq Out\left(\Gamma\right)$), does every finite index subgroup
of $G$ contain a principal congruence subgroup of the form $\ker\left(G\to Aut\left(\Gamma/M\right)\right)$
(resp. $\ker\left(G\to Out\left(\Gamma/M\right)\right)$) for some
characteristic finite index subgroup $M\leq\Gamma$?

Now, let $\pi_{g,n}$ be the fundamental group of $S_{g,n}$, the
surface of genus $g$ with $n$ punctures, such that $\chi\left(S_{g,n}\right)=2-2g-n\leq0$.
Then, there is an injective map of $PMod\left(S_{g,n}\right)$, the
pure mapping class group, into $Out\left(\pi_{g,n}\right)$ (cf. \cite{key-20},
chapter 8). Thus, one can ask the CSP for $PMod\left(S_{g,n}\right)$
as a subgroup of $Out\left(\pi_{g,n}\right)$. Considering the above
problem, it is known that:
\begin{thm}
For $g=0,1,2$ and every $n>0$, $PMod\left(S_{g,n}\right)$ has the
CSP.
\end{thm}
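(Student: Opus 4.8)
The plan is to prove all three genera uniformly, by induction on the number $n$ of punctures: one peels punctures off one at a time using the Birman exact sequence, and, at the bottom of the $g=2$ tower, reduces the genus to $0$ by means of the hyperelliptic involution, so that the entire argument ultimately rests on the congruence subgroup property for $F_{2}$ (Theorem~\ref{thm:Free 2}). The whole scheme is modelled on $\varoint$\ref{sec:SCP F2}: a short exact sequence $1\to N\to\Gamma\to Q\to 1$ with $N$ finitely generated and $\widehat{N}$ centreless stays exact after profinite completion (Proposition~\ref{prop:BER-exact}), and the congruence property then propagates from $Q$ to $\Gamma$ provided one controls how $N$, hence $\widehat{N}$, sits inside $Out(\widehat{\Gamma})$.

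Filling in one puncture of $S_{g,n}$ yields the Birman exact sequence
\[
1\longrightarrow\pi_{1}(S_{g,n-1})\stackrel{\mathrm{Push}}{\longrightarrow}PMod(S_{g,n})\longrightarrow PMod(S_{g,n-1})\longrightarrow 1 ,
\]
which is compatible with the surjection $\pi_{g,n}\twoheadrightarrow\pi_{g,n-1}$ killing the vanishing loop $\delta$: the group $PMod(S_{g,n})$ preserves the normal closure of $\delta$, acts on the quotient through $PMod(S_{g,n-1})\le Out(\pi_{g,n-1})$, and the point-pushing elements act on $\pi_{g,n}$ by Birman's explicit partial conjugations along $\delta$. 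The inductive step is a propagation lemma: if $PMod(S_{g,n-1})$ has the congruence property inside $Out(\pi_{g,n-1})$, then, given a finite index subgroup of $PMod(S_{g,n})$, one manufactures a congruence subgroup inside it by pulling back a congruence subgroup of $PMod(S_{g,n-1})$ and intersecting with a suitable congruence subgroup of the point-pushing group $\pi_{1}(S_{g,n-1})$. Its proof imitates the three steps of $\varoint$\ref{sec:SCP F2}: profinite exactness of the Birman sequence (Proposition~\ref{prop:BER-exact}, using that $\widehat{\pi_{1}(S_{g,n-1})}$ is centreless --- immediate from Proposition~\ref{prop:BER-F2} when $S_{g,n-1}$ is punctured, and still true for the closed surface groups appearing as base cases); injectivity of the completed quotient map, which is the inductive hypothesis; injectivity of the completed point-pushing map, as for $\widehat{Inn(F_{2})}$ in $\varoint$\ref{sec:SCP F2}; and a transversality step --- the analogue of Lemma~\ref{lem: part 3} --- showing that the images of $\widehat{\pi_{1}(S_{g,n-1})}$ and of $\widehat{PMod(S_{g,n-1})}$ inside $Out(\widehat{\pi_{g,n}})$ intersect trivially.

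It remains to anchor the three inductions. For $g=0$ the base is $S_{0,3}$, where $PMod$ is trivial and there is nothing to prove; the propagation lemma with trivial quotient then handles $PMod(S_{0,4})\cong\pi_{1}(S_{0,3})\cong F_{2}$ --- here Theorem~\ref{thm:Free 2} supplies the only genuinely arithmetic input --- and the lemma carries the property up the tower $S_{0,4}\leadsto S_{0,5}\leadsto\cdots$. For $g=1$ the induction is anchored at $S_{1,1}$ and goes no lower: filling the last puncture would land in $Out(\pi_{1,0})=Out(\mathbb{Z}^{2})=GL_{2}(\mathbb{Z})$, where $SL_{2}(\mathbb{Z})=Mod(S_{1,0})$ famously fails the property ($C(\mathbb{Z}^{2})=\widehat{F}_{\omega}$, cf. $\varoint$\ref{sec:CSP meta 2}); and $S_{1,1}$ is covered by Theorem~\ref{thm:Free 2}, since $\pi_{1,1}=F_{2}$ and $PMod(S_{1,1})\cong SL_{2}(\mathbb{Z})$ is a finite index subgroup of $Out(F_{2})$. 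For $g=2$ the tower is pushed one step below the stated range, to the closed surface $S_{2,0}$: there $Mod(S_{2})$ is the hyperelliptic mapping class group, the hyperelliptic involution generates a finite central subgroup, and the quotient is commensurable with $PMod(S_{0,6})$; since a finite central subgroup is invisible to the congruence problem and the property is stable under the routine commensurability reductions of $\varoint$\ref{sec:SCP F2}, the closed genus-$2$ case follows from the genus-$0$ case, while the steps $S_{2,0}\leadsto S_{2,1}\leadsto\cdots$ are again the propagation lemma (now with surface-group point-pushing subgroups, whose profinite completions are still centreless, with the expected procyclic centralizers).

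The main obstacle is the transversality step of the propagation lemma. Profinite exactness of the Birman sequence and the injectivity statements are of the same easy type as in $\varoint$\ref{sec:SCP F2}; the real work is to show that an element of $\widehat{PMod(S_{g,n})}$ lying over the identity of $\widehat{PMod(S_{g,n-1})}$ and contained in the completed point-pushing group can act trivially on $\widehat{\pi_{g,n}}$ only if it is trivial --- equivalently, that a congruence subgroup of the quotient together with a congruence subgroup of $\pi_{1}(S_{g,n-1})$ really does cut out a congruence subgroup of $PMod(S_{g,n})$. Concretely this amounts to pinning down, along the induction, the centralizers in $\widehat{\pi_{g,n}}$ of the vanishing-cycle loops and of their products --- an $Out$-theoretic, iterated refinement of Proposition~\ref{prop:BER-F2}(2) and Proposition~\ref{prop:BER-dis-ver} --- and to choosing the characteristic finite index $M\le\pi_{g,n}$ compatibly, so that these centralizers are already detected in the finite quotients $\pi_{g,n}/M$. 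This is exactly where the hypothesis $g\le 2$ enters: the inductive step is genus-independent, but the only available anchors are $S_{0,3}$ (trivial), $S_{1,1}$ (a finite index subgroup of $Out(F_{2})$, covered by Theorem~\ref{thm:Free 2}) and the closed genus-$2$ surface (via the hyperelliptic involution); for $g\ge 3$ no such anchor is known, and the congruence subgroup property for $PMod(S_{g,n})$ remains open.
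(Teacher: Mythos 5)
First, a point of orientation: the paper does not prove this theorem at all --- it is quoted as known, with the genus $0$ cases attributed to Diaz--Donagi--Harbater and McReynolds, genus $1$ to Asada, and genus $2$ (open surfaces) to Boggi. So there is no in-paper argument to match; your proposal has to stand on its own, and as it stands it has a genuine gap. The entire weight of your induction rests on the ``propagation lemma'' for the Birman exact sequence, and within it on the transversality step: that the congruence topology of $PMod\left(S_{g,n}\right)$ (defined by characteristic finite index subgroups $M\leq\pi_{g,n}$) induces the \emph{full} profinite topology on the point-pushing subgroup $\pi_{1}\left(S_{g,n-1}\right)$, equivalently that a congruence subgroup of the base together with a finite index subgroup of the fibre cuts out a congruence subgroup of the total group. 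You describe this as ``the real work'' and then dispose of it by analogy with $\varoint$2, via ``an iterated refinement'' of Proposition \ref{prop:BER-F2}(2) and Proposition \ref{prop:BER-dis-ver}. But the argument of $\varoint$2 leans on very special features of $Aut'\left(F_{2}\right)$: the splitting $Inn\left(F_{2}\right)\rtimes\left\langle \alpha,\beta\right\rangle $ and the explicit retraction $\pi:\Delta\to\left\langle \alpha,\beta\right\rangle $ intertwining the two actions. Nothing of the sort is available for $PMod\left(S_{g,n}\right)$ inside $Out\left(\pi_{g,n}\right)$, and the centralizer statements you need (for boundary/puncture loops in profinite surface groups, uniformly detected in finite characteristic quotients chosen compatibly along the induction) are themselves substantial theorems. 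Indeed, the cited proofs obtain exactly this step by entirely different and deeper means --- Asada via monodromy of families of algebraic curves, McReynolds via Thurston's argument for pure braid groups, Boggi via profinite Teichm\"uller-theoretic methods --- and your claim that the inductive step is ``genus-independent'' and of ``the same easy type'' as $\varoint$2 is precisely the kind of assertion that, if it held formally, would reduce the (open) higher-genus CSP to finding anchors; history of attempted proofs along these lines shows this step is where arguments break.

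Two further concrete problems with the anchoring. For $g=2$ you descend to the closed surface $S_{2,0}$ and invoke the hyperelliptic involution to pass to genus $0$; but the cited result of Boggi is explicitly the \emph{open} surface case, and your reduction requires comparing two different congruence topologies --- one defined by characteristic quotients of the closed genus-$2$ surface group, the other by characteristic quotients of the free group $\pi_{0,6}$ --- across the quotient by the hyperelliptic involution. That comparison is not a ``routine commensurability reduction'' in the sense of Lemma \ref{lem:cong-sub} (which only handles finite index subgroups of a fixed $Aut\left(\Gamma\right)$ or $Out\left(\Gamma\right)$ with $\Gamma$ unchanged); it is the heart of the genus-$2$ theorem. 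Similarly, for $g=1$, Theorem \ref{thm:Free 2} (plus Proposition \ref{prop:Out-Aut}-type reductions) does give the anchor $PMod\left(S_{1,1}\right)\cong SL_{2}\left(\mathbb{Z}\right)\leq Out\left(F_{2}\right)$, since this subgroup has finite index; but already the first inductive step, the injectivity of $\widehat{PMod\left(S_{1,2}\right)}\to Out\left(\hat{\pi}_{1,2}\right)$ with $PMod\left(S_{1,2}\right)$ of infinite index in $Out\left(F_{3}\right)$, is a nontrivial part of Asada's theorem and cannot be extracted from anything proved in this paper. So the skeleton of your induction reflects the strategy genuinely used in the literature, but the proposal proves none of the steps that make that strategy work.
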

The cases for $g=0$ were proved by \cite{key-16-1} and in \cite{key-18},
the cases for $g=1$ were proved by \cite{key-3}, and the cases for
$g=2$ where proved by \cite{key-19}. It can be shown that for every
$n>0$, $\pi_{g,n}\cong F_{2g+n-1}$ = the free group on $2g+n-1$
generators. Thus, the above cases give an affirmative answer for various
subgroups of the outer aoutomorphism group of finitely generated free
groups. Though, the CSP for the full $Out\left(F_{d}\right)$ where
$d\geq3$ is still unsettled.

Institute of Mathematics\\
The Hebrew University\\
Jerusalem, ISRAEL 91904\\
\\
davidel-chai.ben-ezra@mail.huji.ac.il\\
alex.lubotzky@mail.huji.ac.il
\end{document}